\tikzset{negated/.style={
        decoration={markings,
            mark= at position 0.5 with {
                \node[transform shape] (tempnode) {$\backslash$};
            }
        },
        postaction={decorate}
    }
} 
\def\@firstoftwo@second#1#2{%
  \def\temp##1.##2\@nil{##2}%
   \temp#1\@nil}
\newcommand\sref[1]{%
   \expandafter\@setref\csname r@#1\endcsname\@firstoftwo@second{#1}%
}
\newcommand{\ind}{\mathds{1}} 
\newtheorem{theo}{Theorem}[section]
\newtheorem*{theo*}{Theorem} % Unnumbered version
\newtheorem{lemm}[theo]{Lemma}
\newtheorem{conj}[theo]{Conjecture}
\newtheorem*{conj*}{Conjecture} % Unnumbered
\newtheorem{coro}[theo]{Corollary}
\newtheorem{ques}[theo]{Question}
\newtheorem{prop}[theo]{Proposition}
\theoremstyle{definition} 
\newtheorem{exam}[theo]{Example}
\newtheorem*{exam*}{Example} % Unnumbered
\newtheorem{defi}[theo]{Definition}
\theoremstyle{remark}
\newtheorem{rema}[theo]{Remark}
\numberwithin{equation}{section}
\newcommand{\CC}{\mathbb{C}}
\newcommand{\cB}{\mathcal{B}}
\newcommand{\cK}{\mathcal{K}}
\newcommand{\cL}{\mathcal{L}}
\newcommand{\cT}{\mathcal{T}}
\newcommand{\cP}{\mathcal{P}}
\newcommand{\cJ}{\mathcal{J}}
\newcommand{\cF}{\mathcal{F}}
\def\sD{\mathscr{D}}
\def\sO{\mathscr{O}}
\def\sS{\mathscr{S}}
\def\and{\quad\mathrm{and}\quad}
\def\Hom{{\textup{Hom}}}
\def\Eu{\textup{Eu}}
\def\Der{{\textup{Der}}}
\def\PH{{\textup{PH}}} 
\def\Sch{{\textup{Sch}}}
\def\GSV{{\textup{GSV}}}
\def\Ind{{\textup{Ind}}}
\def\Sing{{\textup{Sing}}}
\def\beq{\begin{equation}}
\def\eeq{\end{equation}}
\begin{document}
\title[Chern Classes of  Foliations]{Microlocal indices and Chern Classes of Foliations}

%    Information for first author
\author{Xia Liao}
%    Address of record for the research reported here
\address{Department of Mathematical Sciences, 
Huaqiao University, 
Chenghua North Road 269, 
Quanzhou, Fujian, China}
%    Current address
%\curraddr{Department of Mathematics and Statistics,
%Case Western Reserve University, Cleveland, Ohio 43403}
\email{xliao@hqu.edu.cn}
%    \thanks will become a 1st page footnote.
\thanks{The first author is supported by Chinese National Science Foundation of China (Grant No.11901214).}

%    Information for second author
\author{Xiping Zhang}
\address{School of Mathematical Sciences, Key Laboratory of Intelligent Computing and Applications (Ministry of Education),
Tongji University, 
1239 Siping Road, 
Shanghai, China}
\email{xzhmath@gmail.com}
\thanks{The second author  is supported by National Natural Science Foundation of China (Grant No.12201463). }

%    General info
%\subjclass[2020]{14B05, 14C17}

%\date{January 1, 2001 and, in revised form, June 22, 2001.}

%\dedicatory{This paper is dedicated to our advisors.}

%\keywords{ }

\begin{abstract} 
In this paper, we study how global index formulas arise in the theory of one-dimensional holomorphic foliation from the microlocal point of view. We give short proofs and generalizations to a few exisiting index formulas concerning Schwartz, GSV and logarithmic indices.

\end{abstract}
\maketitle

\section{introduction}

Let $M$ be a compact complex analytic manifold and let $L$ be a holomorphic line bundle. A one-dimensional holomorphic foliation $\cF$ on $M$ with tangent bundle $L$ is a holomorphic section $s_\cF \in H^0(M,TM\otimes L^\vee)$, and it is locally represented by holomphic vector fields via local trivilizations of $L$. Suppose $\cF$ has only isolated singularities, i.e. $s_\cF$ only vanishes at finitely many points, then the classical Baum-Bott formula states that
\begin{equation}
\label{eq0}
\int_M c(TM-L)\cap [M] = \sum_{x\in \Sing(\cF)} \Ind_{\PH}(\cF, x) 
\end{equation}
where $\Ind_{\PH}(\cF, x)$ is the Poincar\'e-Hopf index of a local vector field around $x$ representing $\cF$. When the line bundle $L$ is trivial, equation \eqref{eq0} is reduced to the classical Poincar\'e-Hopf index theorem.

In many generalizations of the Baum-Bott formula, one often considers a hypersurface $D$ and assumes $\cF$ is tangent to $D_{sm}$. In this situation there are other indices to consider, namely, the GSV index, the Schwartz index, the homological index, the logarithmic index, and if $\dim M=2$ the Camacho-Sad index and the variational index. The definitions of these indices each requires some additional assumptions on $D$ and $\cF$, and the reader may see \S\ref{sec; microlocalintersectionformula} and the references therein for more details on the GSV, Schwartz and logarithmic indices. Ingenious summations of these indices give rise to various global index formulas. Here we only mention two of them which are most relevant to this paper. 
\begin{enumerate}
\item When $D$ is normal crossing, Corr\^{e}a  and Machado in \cite{CM19} and \cite{CM24} proved that 
\[
\int_M c(TM(-\log D)-L)\cap [M]
\]
is a summation of  Poincar\'e-Hopf indices and logarithmic indices.
\item When $D$ has only isolated singularities, Machado in \cite[Theorem A]{Macha25} and  \cite[Theorem 1]{Macha25-Mar} proved that
\[
\int_M c(TM-\sO(D)-L)\cap [M]
\]
is a summation of Poincar\'e-Hopf indices and logarithmic indices minus a summation of GSV indices.
\end{enumerate} 

Proofs of these index formulas in the original papers depend on heavy computations which are riddling at some parts. To clarify what actually happens, we give new characterizations of the Schwartz, GSV and logarithmic indices by microlocal intersection formulas in \S\ref{sec; microlocalintersectionformula}. In the literature the Schwartz and GSV indices were defined by either obstruction theory or Chern-Weil theory, and the logarithmic index was defined by homological algebra. Defining them by different mathematical theories complicates the comparison of these indices, whereas the microlocal approach is capable of giving a unified framework for comparing them.

To elaborate our perspective, consider a vector field $\tilde{\nu}$ on an open set $W\subset \CC^n$ and a reduced hypersurface $D=V(f)\subset W$. We assume that $\tilde{\nu}$  is tangent to $D$ at its smooth points and has only isolated singularities.
Any Hermitian metric on $W$ induces an $\mathbb{R}$-linear $C^\infty$ isomorphism $TW \cong T^*W$ under which $\tilde{\nu}$ corresponds to a continuous section $\tilde{\sigma}\colon W\to T^*W$.
We then consider constructible functions $\ind_D$, $\Psi_f(\ind_W)$, $\ind_{W\setminus D}$ and their characteristic cycles $\textup{CC}( \ind_D)$, $\textup{CC}( \Psi_f(\ind_W))$, $\textup{CC}( \ind_{W\setminus D})$  
(see \S\ref{sec; charcycle} for  details about characteristic cycles). Here $\Psi_f(\ind_W)$ is the constructible function of nearby cycles, whose value at $x\in D$  is given by the Euler characteristic of a local
Milnor fiber of $f$ at $x$. 
In Proposition~\ref{prop; microSchFormula}, Proposition~\ref{prop; microGSVFormula}, 
Proposition~\ref{prop; micrologFormulaiso} 
and Proposition~\ref{prop; micrologFormulafree} 
we prove the following result.
\begin{theo} 
\label{t1}
Let $\Sch(\tilde{\nu}, D, 0)$, $\GSV(\tilde{\nu}, D, 0)$ and $\Ind_{\log}(\tilde{\nu}, D, 0)$ be the Schwartz index, the GSV index and the logarithmic index of $\tilde{\nu}$ at $0\in W$ respectively.   
\begin{enumerate}
\item If $D$ has an isolated singularity at $0$, denoting by $\tau_D(0)$ and $\mu_D(0)$ the Tjurina number and the Milnor number of $D$ at $0$ we have 
\begin{align*}
\Sch(\tilde{\nu}, D, 0)  = & \sharp_0 \left([\textup{CC}( \ind_D)]\cdot [\tilde{\sigma}(W)]\right)  \/,\\
\Ind_{\log}(\tilde{\nu}, D, 0) =& \sharp_0 \left([\textup{CC}(\ind_{W\setminus D})]\cdot [\tilde{\sigma}(W)]\right)+ (-1)^{n-1} (\tau_D(0)-\mu_D(0))\/.	
\end{align*}
\item If the vector field $\tilde{\nu}$  satisfies  ($\ast\ast$) (see \S\ref{sec; microlocalGSV} for details),  we have 
\[
\GSV(\tilde{\nu}, D, 0)   =  \sharp_0 \left([\textup{CC}(\Psi_f(\ind_W)]\cdot [\tilde{\sigma}(W)]\right)  \/.
\]

\item If $D$ is  holonomic, strongly Euler homogeneous and free  at $0$ (see \S\ref{sec; logder} for details),  we have
\[
\Ind_{\log}(\tilde{\nu}, D, 0)=  \sharp_0 \left([\textup{CC}(\ind_{W\setminus D})]\cdot [\tilde{\sigma}(W)]\right)  \/.
\]
\end{enumerate}
\end{theo}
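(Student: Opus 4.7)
The plan is to reduce each of the three identities to a single microlocal index formula and then match definitions case by case. The workhorse is Kashiwara's local index theorem: if $\phi$ is a constructible function on $W$ and $\tilde{\sigma}\colon W\to T^*W$ is a continuous section whose only zero in a small neighborhood of $0$ is $0$ itself, then a small generic perturbation $\tilde{\sigma}'$ of $\tilde{\sigma}$ meets each Lagrangian component of $\textup{CC}(\phi)$ transversally, and the local intersection $\sharp_0([\textup{CC}(\phi)]\cdot[\tilde{\sigma}(W)])$ equals a local Euler characteristic of $\phi$ computed from the critical-point data of $\tilde{\sigma}'$. Because $\tilde{\sigma}$ comes from $\tilde{\nu}$ via a Hermitian metric, a perturbation of $\tilde{\sigma}$ is equivalent to a perturbation of $\tilde{\nu}$; this is how the local analytic indices on the left will enter the picture.

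For part (1), I would fix a Whitney stratification $\{S_\alpha\}$ of $(W,D)$ and use the MacPherson expansion $\textup{CC}(\ind_D)=\sum_{S_\alpha\subset D}n_\alpha[\overline{T^*_{S_\alpha}W}]$. Because $\tilde{\nu}$ is tangent to each $S_\alpha\subset D$, a stratified perturbation of $\tilde{\nu}|_D$ exists whose Poincar\'e--Hopf-type count on the strata is, by definition, $\Sch(\tilde{\nu},D,0)$. Under the Hermitian duality this stratified perturbation corresponds to a perturbation $\tilde{\sigma}'$ of $\tilde{\sigma}$ whose transverse meetings with $\overline{T^*_{S_\alpha}W}$ assemble stratum by stratum into the same count. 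The identification is essentially Brasselet--Schwartz--Sch\"urmann's identification of $\textup{CC}(\ind_D)$ with the Schwartz cycle, transplanted into the microlocal setting.

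For part (2), the condition $(\ast\ast)$ is what allows $\tilde{\nu}$ to be deformed to a vector field tangent to a generic Milnor fiber $F=f^{-1}(\delta)\cap B_\epsilon$ of $f$ at $0$; call the restriction $\tilde{\nu}_F$. By definition $\GSV(\tilde{\nu},D,0)=\sum_{p\in F}\Ind_{\PH}(\tilde{\nu}_F,p)$, which in turn equals the Euler characteristic of $F$ computed with respect to the perturbed section. On the microlocal side, $\Psi_f(\ind_W)$ is the constructible function whose value at $x$ is $\chi$ of the local Milnor fiber of $f$ at $x$, and the local-index formula applied to $\Psi_f(\ind_W)$ computes exactly the same Euler characteristic after perturbation. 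Matching the two is then an unwinding of definitions, using $(\ast\ast)$ to guarantee that the relevant Milnor fiber is generic with respect to $\tilde{\sigma}$. For part (3), I would exploit the identity of constructible functions $\ind_W=\ind_D+\ind_{W\setminus D}$, which gives
\[
\textup{CC}(\ind_{W\setminus D}) = (-1)^n[T^*_W W] - \textup{CC}(\ind_D) \/.
\]
The intersection of $(-1)^n[T^*_W W]$ with $[\tilde{\sigma}(W)]$ at $0$ is the Poincar\'e--Hopf index $\Ind_{\PH}(\tilde{\nu},0)$, so (3) is equivalent to the identity $\Ind_{\log}(\tilde{\nu},D,0)=\Ind_{\PH}(\tilde{\nu},0)-\sharp_0([\textup{CC}(\ind_D)]\cdot[\tilde{\sigma}(W)])$. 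Under the three hypotheses on $D$, the logarithmic de Rham complex $\Omega_W^\bullet(\log D)$ is a perverse resolution computing $Rj_*\CC_{W\setminus D}$ (up to shift), so the homological definition of $\Ind_{\log}$ unfolds into the local Euler characteristic of $\ind_{W\setminus D}$ around $0$, which is exactly what the microlocal formula assigns to $\textup{CC}(\ind_{W\setminus D})$.

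The hardest step will be part (3): turning the homological-algebraic definition of $\Ind_{\log}$ into the local Euler characteristic of $\ind_{W\setminus D}$ genuinely needs all three hypotheses — freeness supplies a locally free resolution of $\Der(-\log D)$, strong Euler homogeneity lets one replace the full logarithmic complex by a Koszul-type complex built from $\tilde{\nu}$, and holonomicity ensures that the characteristic cycle of this complex agrees with $\textup{CC}(\ind_{W\setminus D})$. The second most delicate point is pinning down $(\ast\ast)$ in part (2) precisely enough that the descent of $\tilde{\nu}$ to a smooth Milnor fiber is well-defined and compatible with the microlocal perturbation; the remaining work in all three cases is standard transversality.
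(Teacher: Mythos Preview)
Your outlines for parts (1) and (2) point in a workable direction but differ from the paper's arguments. For (1) the paper does not run a stratum-by-stratum count via a MacPherson expansion; instead it exploits the isolated-singularity hypothesis directly by first proving the lemma $\sharp_0([\textup{CC}(\ind_D)]\cdot[\tilde{\sigma}_{rad}(W)])=1$ for the radial section (using the dual Euler transformation), and then deforming $\tilde{\sigma}$ to the interpolating section $\tilde{\rho}$ so that conservation of number picks up the remaining Poincar\'e--Hopf contributions on $D_{sm}$. For (2) the paper's argument is close in spirit to yours: replace $\nu_t$ by its orthogonal projection $\nu'_t$ onto $TF_t$, rewrite $\GSV$ as $(-1)^{n-1}\int[T^*_{F_t}W]\cdot[\tilde{\sigma}(W)]$, and invoke the specialization formula for characteristic cycles to pass to $\textup{CC}(\Psi_f(\ind_W))$ as $t\to 0$. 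Condition $(\ast\ast)$ is used only to make $\nu_t$ continuous and (together with Proposition~\ref{prop: nu is tangent to D}) to ensure that $\tilde{\sigma}(W)$ meets $|\textup{CC}(\Psi_f(\ind_W))|$ only at the origin.

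Part (3), however, has a genuine gap. The logarithmic index is the Euler characteristic of the \emph{contraction} complex $(\Omega^\bullet_{W,0}(\log D), i_{\tilde{\nu}})$, not of the logarithmic \emph{de Rham} complex $(\Omega^\bullet_{W,0}(\log D), d)$. The Logarithmic Comparison Theorem---even granting it under ``holonomic, strongly Euler homogeneous, free'', which is not known in this generality---concerns the latter and says nothing about the former; since the terms $\Omega^i_{W,0}(\log D)$ are infinite-dimensional over $\CC$, the two Euler characteristics have no a priori reason to agree. Nor does the microlocal intersection $\sharp_0([\textup{CC}(\ind_{W\setminus D})]\cdot[\tilde{\sigma}(W)])$ compute a ``local Euler characteristic of $\ind_{W\setminus D}$'' in any sense that matches the contraction complex; for the specific section $\tilde{\sigma}$ it is a weighted sum of Euler obstructions $\Eu(\tilde{\nu},\ind_{W\setminus D},0)$, not the value $\ind_{W\setminus D}(0)$. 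The paper's route is entirely different and bypasses LCT: it uses the logarithmic map $j\colon T^*W\to T^*W(\log D)$ together with the identification $\textup{CC}(\ind_{W\setminus D})=(-1)^n[j^{-1}(W)]$ (with $W$ the zero section of $T^*W(\log D)$) established in the authors' earlier work under exactly the three stated hypotheses. Pushing the intersection forward along $j$ turns $\sharp_0([\textup{CC}(\ind_{W\setminus D})]\cdot[\tilde{\sigma}(W)])$ into the intersection of $\tilde{\nu}$, now viewed as a holomorphic section of $TW(-\log D)$, with the zero section; this is the length $\dim_{\CC}\sO_{W,0}/(\alpha_1,\ldots,\alpha_n)$, which equals $\Ind_{\log}(\tilde{\nu},D,0)$ by Aleksandrov's formula.
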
 

\begin{rema}
The microlocal approach to local and global indices has a long history. It was initiated by Kashiwara in \cite{MR368085} for holonomic D-modules, and was further developed in \cite[Chapter 8,9]{MR1299726}, \cite{MR2059229}, \cite{MR2031639} for constructible sheaves and constructible functions. In a series of papers (see for example \cite{MR2025268},\cite{MR2171307}), Ebeling and Gusein-Zade introduced local Euler obstructions for 1-forms, but they never related their indices to microlocal intersection numbers. In private communication after the completion of this paper, Sch\"{u}rmann informed us that about 20 years ago he found a way to express local indices from the theory of vector fields by microlocal intersection numbers which is very similar to our Theorem \ref{t1}, but the result was never published.  
\end{rema}

Based on this theorem, any relation among constructible functions $\ind_D, \Psi_f(\ind_W), \ind_{W\setminus D}$ gives rise to a relation among $\Sch(\tilde{\nu}, D, 0),\GSV(\tilde{\nu}, D, 0),\Ind_{\log}(\tilde{\nu}, D, 0)$ (see Remark \ref{rema:GSV-SCH=mu} for a simple example). This theorem also demonstrates the scope and limitation of our approach. For any interesting constructible function $\gamma$ one can define an index by $ \sharp_0 \left([\textup{CC}(\gamma)]\cdot [\tilde{\sigma}(W)]\right)$ provided the local intersection number is well-defined, i.e. the covector $\tilde{\sigma}(0)$ is an isolated point in the set $\tilde{\sigma}(W) \cap |\textup{CC}(\gamma)|$. For example, the Poincar\'{e}-Hopf index admits such an interpretation for $\gamma= 1_W$ (see Example \ref{exam; microPH}). On the other hand, since the Camacho-Sad index can be any complex number, it cannot be obtained as an intersection number in the above manner. 

Since the local Euler obstructions form a  basis for the group of constructible functions, it is natural to ask when is 
\[
 \sharp_0 \left([\textup{CC}(\Eu_X)]\cdot [\tilde{\sigma}(W)]\right)= (-1)^d\cdot\sharp_0 \left([T^*_XW]\cdot [\tilde{\sigma}(W)]\right)
\] 
well-defined, where $X$ is an irreducible analytic subvariety of dimension $d$ containing $0$. We show in \S\ref{sec; Eulerobstruction} that it is well-defined when $\tilde{\nu}$ is tangent to $X$ at its smooth points and has an isolated singularity at $0$. Using a duality between the Nash blowup of $X$ and the conormal space $T^*_XW$, we can transform the intersection $[T^*_XW]\cdot [\tilde{\sigma}(W)]$ into an intersection in the Nash tangent bundle. This leads to a  Gonz\'{a}lez-Sprinberg type formula for $\sharp_0 \left([\textup{CC}(\Eu_Z)]\cdot [\tilde{\sigma}(W)]\right)$ (see Proposition \ref{prop; EulerObs} and Theorem \ref{theo:microlocal for Euobs}). In particular, we obtain in Corollary \ref{coro:algebraicGSV} an algebraic formula for the GSV index at an isolated singularity of $D$ that does not appeared to have been recorded in the literature before.

Returning to the discussion of global index formulas, it is now an easy matter to manufacture them in large quantities. Indeed, fixing any Hermitian metric on $TM\otimes L^\vee$ induces a $\mathbb{R}$-linear $C^\infty$ isomorphism $TM\otimes L^\vee \cong T^*M \otimes L$. The holomorphic section $s_\cF$ defining the foliation is tranformed to a section $\sigma_\cF$ of $T^*M\otimes L$. Given any constructible function $\gamma$, we can intersect $\sigma_\cF(M)$ with the jet characteristic cycle $\textup{JCC}(\gamma)$ which is just $\textup{CC}(\gamma)$ twisted by $L$ and obtain a summation of a bunch of indices, provided that $\sigma_{\cF}(M)$ meets $\textup{JCC}(\gamma)$ only at finitely many points. Deforming $\sigma_\cF$ to the zero section of $T^*M\otimes L$ does not change the total index, and Aluffi's twisted shadow formula Proposition \ref{prop; jettwistformula} helps us to express the intersection with the zero section as an integral. 

Applying this method to the constructible function $\gamma=\ind_{M\setminus D}$, and assuming either D is strongly Euler homogeneous, holonomic and free, or assuming D has only isolated singularities and $L$ is very ample, we obtain a generalization of Corr\^{e}a and Machado's theorem (\cite[Theorem 2]{CM24}) about $\int_M c(TM(-\log D)-L)\cap [M]$ in Corollary \ref{coro; SEH}.    
Applying it to $\gamma=\Psi_f(\ind_M)$ without hypothesizing singularities of $D$, we obtain a generalization of 
Suwa's theorem \cite[Theorem 7.16]{Suwa} about summation of $\GSV$ indices on $D$ in Corollary \ref{coro; sumGSVonD} and a generalization of 
Machado's theorem (\cite[Theorem A]{Macha25}) about $\int_M c(TM-\sO(D)-L)\cap [M]$ in Corollary \ref{coro; iso}.

Here we leave the burden to the reader to find the precise statements and formulas in the corollaries  to avoid congesting the introduction by big formulas.

In general one may expect  to write down   global index formulas for other interesting constructible functions   such as those associated to the intersection cohomology sheaves.  
From our explanation, the  difficulty   lies primarily at writing down explicitly what the characteristic cycle is and finding the right conditions for the foliation to make sure all the local intersections are well-defined.

\hspace{1cm}

\noindent{\bf{Acknowledgement}}:
We thank Arturo Fern\'{a}ndez-P\'{e}rez and J\"{o}rg Sch\"{u}rmann for carefully reading the earlier drafts of the paper and giving very helpful comments.

\section{Preliminary}
\label{sec; preliminary}

\subsection{Logarithmic Derivations and Free Divisors}
\label{sec; logder}
Let $M$ be a complex manifold of dimension $n$ and $D$ be a reduced hypersurface on $M$. 
\begin{defi}
\label{defn; logDer}
The sheaf of logarithmic derivations along $D$, denoted by $\textup{Der}_M(-\log D)$,  is locally given by 
\begin{equation*}
\Der_M(-\log D)(U)=\{\chi \in \Der_X(U) \ | \  \chi(h) \in h\mathscr{O}_U\}
\end{equation*}
for any open subset $U\subset M$ and any local defining equation  $h$ of $D$ on $U$.   
\end{defi} 
This sheaf was  introduced  by Saito  in \cite{MR586450},  where he proved that it is a coherent and reflexive $\sO_M$-module, and has generic  rank $n$ outside $D$.  

\begin{defi}
We say $D$ is free at $p\in D$ if $\Der_{M,p}(-\log D)$ is a free $\sO_{M,p}$-module. We
say $D$ is a free divisor if   the coherent sheaf $\Der_M(-\log D)$ is locally free. 
\end{defi}
 
Given a free divisor $D$, the logarithmic tangent bundle $TM(-\log D)$ is the vector bundle whose sheaf of sections is $\Der_M(-\log D)$. The logarithmic cotangent bundle $T^*M(\log D)$ is the dual of $TM(-\log D)$, whose sheaf of sections is $\Omega^1_M(\log D)$. The natural inclusion $\Der_M(-\log D) \hookrightarrow \Der_M$ induces a linear map between rank $n$ vector bundles $j\colon T^*M \to T^*M(\log D)$. We  call it the logarithmic map of the pair $(M,D)$.

\begin{defi}[\cite{MR3366865}]
\label{defn; EH}
The divisor $D$ is strongly Euler homogeneous at a point $p\in D$ if there exists a local equation $h\in \sO_{M,p}$ of $D$ and a local Euler vector field $\chi\in \mathfrak{m}_{M,p}\cdot \Der_{M,p}$  such that $\chi(h)=h$.   
The divisor $D$ is strongly Euler homogeneous if it is so at any point $p\in D$.
\end{defi} 

Following  \cite{MR586450},  $D$ admits a unique   stratification   such that the tangent space of each stratum consists of the evaluations of the logarithmic vector fields along $D$.   Such a stratification is called the logarithmic stratification of $D$. 
\begin{defi} 
\label{defn; holonomic}
$D$ is called  holonomic  if the logarithmic stratification  is locally finite.
\end{defi} 

\begin{exam}
Normal crossing divisors are holonomic, strongly Euler homogeneous and free.
\end{exam} 

\begin{exam}
Any reduced divisor on a complex smooth surface is always holonomic and free. It is strongly Euler homogeneous if and only if it is quasi-homogeneous.
\end{exam}

One of the key features of holonomic  strongly Euler homogeneous  free divisors  is  that the ambient space $M$ is log transverse to $D$ (see \cite[Definition 4.1]{LZ24-1}). We will come back to this feature in Proposition~\ref{prop; hSEHchernformula}.

\subsection{Characteristic Cycles}
\label{sec; charcycle}
\begin{defi} 
\label{defn;conormal}
Let $Z$ be any irreducible subvariety of a complex analytic manifold $M$. 
The   conormal space of $Z$ in $M$, denoted by $T^*_ZM$, is the closure of  the following space in $T^*M$.
 \[
 T^*_{Z_{sm}}M:=\Big\{ (x, \xi) \ \vert \ x\in Z_{sm},  \ \xi\in T^*M|_x \text{ and }   \xi(T_x Z)=0 \Big\}
 \]
 \end{defi}
This is a conical Lagarangian subvariety of $T^*M$. It can also be shown that any irreducible conical Lagarangian subvariety of $T^*M$ has the form $T_Z^*M$ for some subvariety $Z$. 
Let $\cL(M)$ be the free abelian group generated by conical Lagarangian subvarieties, then we have $\cL(M)\cong \mathcal{Z}(M)$, the group of cycles of $M$.

Given any line bundle $L$ on $M$ we may also consider a $L$-twisted version of  the conormal space, i.e., the jet conormal space $JT^*_ZM$ introduced by the first author
in \cite{MR4749159}. We also refer to   \cite[Definition 3.7]{LZ24-1} for another treatment.  In this paper we only need the following characterization.
\begin{prop}
\label{prop; jetconormal}
The jet conormal space $JT^*_ZM$ is the  unique $n$-dimensional subspace of the twisted cotangent bundle $T^*M\otimes L$ such that for every open subset $U$ of $M$ that trivializes $L$ and $T^*M$, under  the trivialization map $\iota_U\colon T^*M|_U\cong T^*U\cong    T^*M\otimes L|_U$ we have 
\[
\iota_U(T_Z^*M|_U) =  JT^*_ZM|_U  \/.
\]
\end{prop}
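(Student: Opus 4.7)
The plan is to verify existence and uniqueness of such a subspace directly from the conical nature of $T^*_ZM$. Uniqueness is the easier half: if two $n$-dimensional subspaces of $T^*M\otimes L$ agree on every open set of some trivializing cover, they agree as analytic subvarieties of $T^*M\otimes L$. So I only need to construct one subspace with the stated local property.

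First, I would choose an open cover $\{U_\alpha\}$ of $M$ by open sets simultaneously trivializing $L$ (the trivialization of $T^*M$ over an open $U$ is tautological, since $T^*M|_U = T^*U$). Over each $U_\alpha$ pick a nowhere vanishing section $s_\alpha$ of $L$, which gives the trivialization isomorphism $\iota_\alpha\colon T^*U_\alpha \xrightarrow{\sim} T^*M\otimes L|_{U_\alpha}$ by $\xi\mapsto \xi\otimes s_\alpha$. I then define
\[
JT^*_ZM\big|_{U_\alpha} := \iota_\alpha\bl T^*_ZM|_{U_\alpha}\br\/.
\]
Each local piece is an $n$-dimensional analytic subvariety of $T^*M\otimes L|_{U_\alpha}$.

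The key step is to check that these local pieces glue and that each is independent of the chosen trivializing section. On an overlap $U_\alpha\cap U_\beta$, one has $s_\alpha = g_{\alpha\beta}\, s_\beta$ for a nowhere vanishing holomorphic function $g_{\alpha\beta}$, so for any covector $\xi$,
\[
\iota_\alpha(\xi) = \xi\otimes s_\alpha = (g_{\alpha\beta}\,\xi)\otimes s_\beta = \iota_\beta(g_{\alpha\beta}\,\xi)\/.
\]
Hence $\iota_\alpha(T^*_ZM|_{U_\alpha\cap U_\beta}) = \iota_\beta\bl g_{\alpha\beta}\cdot T^*_ZM|_{U_\alpha\cap U_\beta}\br$. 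Now $T^*_ZM$ is by Definition~\ref{defn;conormal} the closure of a set of covectors defined by a linear vanishing condition on $T_xZ$, so it is invariant under fiberwise scalar multiplication by any nowhere vanishing function. Consequently
\[
\iota_\alpha\bl T^*_ZM|_{U_\alpha\cap U_\beta}\br = \iota_\beta\bl T^*_ZM|_{U_\alpha\cap U_\beta}\br\/,
\]
so the local pieces agree on overlaps and assemble into a well-defined $n$-dimensional analytic subspace $JT^*_ZM$ of $T^*M\otimes L$. The same conicity argument shows the construction does not depend on the choice of $s_\alpha$, confirming that any other trivialization of $L$ over $U_\alpha$ produces the same subset.

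The main (and only) subtle point is the verification that conicity of $T^*_ZM$ is what makes the gluing independent of the transition functions; everything else is bookkeeping with trivializations. With this in hand, the constructed $JT^*_ZM$ satisfies the desired compatibility $\iota_U(T^*_ZM|_U) = JT^*_ZM|_U$ on every trivializing open $U$, and uniqueness follows as noted at the start.
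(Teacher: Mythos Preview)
The paper does not actually prove this proposition: it is stated without proof as a characterization of the jet conormal space, with the construction deferred to \cite{MR4749159} and \cite[Definition 3.7]{LZ24-1}. Your argument is the natural direct verification and is correct; the only point worth making explicit is that the compatibility $\iota_U(T^*_ZM|_U)=JT^*_ZM|_U$ for an \emph{arbitrary} trivializing open $U$ (not just one of the chosen $U_\alpha$) follows by the same conicity argument applied to the transition between your fixed $s_\alpha$ and an arbitrary frame $s$ of $L|_U$ on each $U\cap U_\alpha$.
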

In other words, the jet conormal space and the ordinary conormal space are locally identical; their difference arises during the gluing process of the globalization. We denote by $J\cL(M)$ the free abelian group generated by jet conormal spaces, clearly $J\cL(M)\cong \cL(M)$.

A constructible function on $M$ is an integer valued function $\gamma\colon M\to \mathbb{Z}$ such that $\gamma^{-1}(i)$ is a constructible subset of $M$. The constructible functions on $M$ form an abelian group denoted by  $CF(M)$.  
By a basis of $CF(M)$ we mean a subset $\mathcal{B}$ of $CF(M)$  such that any constructible function is uniquely expressed as a locally
finite linear combination of elements in $\mathcal{B}$ with $\mathbb{Z}$-coefficient. 
Besides the natural  basis of indicator functions  $\{\ind_Z\}$ for all subvarieties $Z$, 
in  \cite{MAC} Macpherson introduced another  basis  as the key ingredient of his generalization of the Poincar\'e-Hopf theorem to singular varieties. 
This is the basis of local Euler obstruction functions $\{\textup{Eu}_Z\}$, defined for all closed subvarieties $Z$ of $M$.     
To simplify notation we will denote the signed local Euler obstruction function $(-1)^{\dim Z}\textup{Eu}_Z$ by $\textup{Eu}^\vee_Z$.
\begin{defi}
\label{defn; JCC}
Let $\gamma=\sum_\alpha m_\alpha \textup{Eu}^\vee_{Z_{\alpha}}$ be a constructible function on $M$, i.e.,  a (locally) finite linear combination with $m_\alpha\neq 0$ and $Z_\alpha\neq Z_\beta$ if  $\alpha\neq \beta$. 
\begin{itemize}
\item[$\bullet$]  The  (ordinary) characteristic cycle  of $\gamma$ is the (locally) finite linear combination 
\[
\textup{CC}(\gamma):= \sum_{\alpha}  m_\alpha \cdot  T^*_{Z_\alpha}M  \in \cL(M)\/.
\]
\item[$\bullet$]  The  jet characteristic cycle  of $\gamma$ is  the (locally) finite linear combination 
\[
\textup{JCC}(\gamma):= \sum_{\alpha}   m_\alpha \cdot  JT^*_{Z_\alpha}M  \in J\cL(M)\/.
\]
\end{itemize}  
\end{defi}
We recommend \cite[\S 10.3.3-10.3.4]{MS22} for a brief introduction to the theory of characteristic cycles in the complex analytic context  and \cite[Chapter 5.2]{MR2031639} for a thorough treatment of characteristic cycles in the real analytic context via stratified Morse theory.

Characteristic cycles are closely related to the following celebrated theorem of Macpherson.  
\begin{theo}[\cite{MAC}] 
There exists a unique natural transformation $c_*$ from the functor $CF(\bullet)$ to the even-dimensional Borel-Moore homology $H_{2*}(\bullet)$ such that  
\[
c_*(\ind_M)=c(TM)\cap [M] \text{ whenever } M \text{ is smooth } \/.
\]
\end{theo}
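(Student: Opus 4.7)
The plan is to follow MacPherson's original strategy: expand any constructible function in the basis $\{\Eu_X\}$ of local Euler obstructions, define $c_*$ on this basis via the Chern-Mather class built from the Nash blowup, and then verify naturality under proper pushforward. Concretely, for an irreducible closed subvariety $\iota\colon X \hookrightarrow M$ of dimension $d$, I would use the Nash blowup $\nu\colon \widetilde{X} \to X$ together with its tautological subbundle $\widetilde{T} \subset \nu^*(TM|_X)$ of rank $d$ extending $TX_{sm}$. Setting $c_M(X) := \nu_*(c(\widetilde{T}) \cap [\widetilde{X}]) \in H_{2*}(X)$ and $c_*(\Eu_X) := \iota_*\,c_M(X)$, and extending $\mathbb{Z}$-linearly to all of $CF(M)$, produces a candidate transformation. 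The normalization is immediate: when $M$ is smooth we have $\Eu_M = \ind_M$, $\widetilde{M} = M$, and $\widetilde{T} = TM$, so $c_*(\ind_M) = c(TM) \cap [M]$.

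The main obstacle is naturality under a proper morphism $f\colon M_1 \to M_2$. It suffices to verify $c_*(f_*\Eu_X) = f_*\,c_*(\Eu_X)$ for every irreducible $X \subset M_1$, which by factoring $f|_X$ through its image reduces to two sub-cases: a closed embedding (trivial) and a proper surjective morphism $f\colon X \to Y$. In the surjective case the key device is MacPherson's \emph{graph construction}: one picks a generic section of a suitable rank-$d$ bundle on a resolution of $X$, scales it by a parameter $t \in \mathbb{P}^1$, and forms the flat limit cycle in an appropriate Grassmann bundle as $t \to \infty$. One limit computes $f_*\,c_M(X)$, while the other decomposes as a $\mathbb{Z}$-combination of Chern-Mather classes of the strata $Z \subset Y$ with coefficients equal to the local Euler obstructions appearing in the expansion of $f_*\Eu_X$ in the basis $\{\Eu_Z\}$. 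Matching these two limits via a stratified transversality analysis of the fibers of $f$ over Whitney strata of $Y$ is the heart of the proof and is the essential technical difficulty.

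Uniqueness is then automatic: the indicator functions $\ind_X$ and Euler obstructions $\Eu_X$ are related by a locally finite, lower-triangular (with respect to dimension of support) change of basis with ones on the diagonal, so naturality applied to the closed embeddings $X \hookrightarrow M$, together with the normalization on smooth strata, pins down $c_*(\ind_X)$ by descending induction on $\dim X$, and hence determines $c_*$ on all of $CF(M)$. A modern alternative closer in spirit to the microlocal viewpoint of this paper would be to define $c_*(\gamma)$ directly from $\textup{CC}(\gamma)$ by a suitable push-pull formula in $T^*M$, after which the normalization becomes a concrete computation and naturality reduces to the functoriality of $\textup{CC}$ under proper direct image.
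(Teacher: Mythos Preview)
The paper does not prove this statement at all: it is quoted as a known result of MacPherson, with the citation \cite{MAC} in the theorem header, and is used as background in the preliminary section \S\ref{sec; charcycle}. So there is no ``paper's own proof'' to compare your proposal against.

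That said, your sketch is a fair outline of MacPherson's original argument (Chern--Mather classes via the Nash blowup, the graph construction for functoriality, and triangular change of basis for uniqueness). The closing remark about defining $c_*$ directly from $\textup{CC}(\gamma)$ is exactly what the paper does use going forward: Proposition~\ref{prop; shadow} records the formula $c_*(\gamma)=\textup{Dual-Shadow}_{T^*M}([\textup{CC}(\gamma)])$, attributed to Aluffi, Sabbah and Ginsburg, and this is the operational form of $c_*$ throughout the rest of the paper. If you wanted to align your write-up with the paper's perspective, you could simply cite MacPherson for existence/uniqueness and then take Proposition~\ref{prop; shadow} as the working definition, rather than reproducing the graph-construction argument.
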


Given a constructible function $\gamma$ on $M$, the standard way to obtain $c_*(\gamma)$ is to apply a so-called shadow operation to $\textup{CC}(\gamma)$. 
The name shadow was coined by Aluffi in \cite{MR2097164} (see also \cite{AMSS23}) but the actual operation first appeared in much earlier papers \cite{MR804052} and \cite{MR1063344}. Let $E$ be a vector bundle of rank $e$ on $M$ and $C$ be a $\CC^*$-invariant cycle of $E$. By \cite[Proposition 2.7]{AMSS23}, the shadow of $C$ cast  in $E$ is characterized by 
\[
\textup{Shadow}_{E}([C])= \left( [C]_{\CC^*} \cdot [M]_{\CC^*} \right)|_{t=1} \/.
\]
Here $M$ denotes the zero section of $E$ and $[\bullet]_{\CC^*}$ denotes the $\CC^*$-equivariant fundamental cycles. The $\CC^*$-equivariant intersection product lives in $H^{\CC^*}_{2*}(E)\cong H_{2*}(M)[t]$, the polynomial ring  with coefficients in $H_{2*}(M)$. Then the shadow is the homology class in $H_{2*}(M)$  by specializing to $t=1$. 

There is a simple minded duality operation on $H_{2*}(M)$ which sends a (not necessarily homogeneous) homology class $\alpha = \sum_i \alpha_i$ (beware that $\alpha_i\in H_{2i}(M)$) to
\[
\textup{Dual}(\alpha)=\sum_i (-1)^i\alpha_i.
\] 
In other words, the sign of the $2i$-dimensional component is modified by $(-1)^i$. We use the notation
\[
\textup{Dual-Shadow}_{E}([C])
\]
for the dual of the shadow of $C$.

\begin{prop}[{\cite[Lemma 4.3]{MR2097164}}]  
\label{prop; shadow}
We have the following relation in $H_*(M)$.
\[
c_*(\gamma) = \textup{Dual-Shadow}_{T^*M}\left([\textup{CC}(\gamma)] \right) \/.
\]
In particular  when $M$ is compact, denoting by $M$ the zero section of $T^*M$ we have 
\[
 \int_M c_*(\gamma) =  \int_M \textup{Shadow}_{T^*M}\left([\textup{CC}(\gamma)] \right)
=\deg \left([\textup{CC}(\gamma)]\cdot [M]  \right) \/.
\]
\end{prop}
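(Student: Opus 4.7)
The plan is to exploit linearity and the Euler obstruction basis. Both maps $\gamma \mapsto c_*(\gamma)$ and $\gamma \mapsto \textup{Dual-Shadow}_{T^*M}([\textup{CC}(\gamma)])$ are $\ZZ$-linear from $CF(M)$ to $H_{2*}(M)$: the left-hand side because $c_*$ is a natural transformation of functors of abelian groups, and the right-hand side because $\textup{CC}$ is linear by Definition~\ref{defn; JCC} and because the shadow is given by the $\CC^*$-equivariant intersection product $([C]_{\CC^*}\cdot [M]_{\CC^*})|_{t=1}$, which depends linearly on $[C]$. Since $\{\Eu^\vee_Z\}_Z$ (where $Z$ ranges over closed irreducible subvarieties of $M$) is a basis of $CF(M)$, it therefore suffices to verify the formula on each $\gamma=\Eu^\vee_Z$.

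For such $\gamma$, we have $\textup{CC}(\Eu^\vee_Z)=T^*_Z M$ by Definition~\ref{defn; JCC}, and $c_*(\Eu^\vee_Z) = (-1)^{\dim Z}\, c_*(\Eu_Z)$ equals the signed Chern-Mather class $c_M^\vee(Z)$ pushed into $M$ by MacPherson's definition. The identity to establish is therefore
\[
(-1)^{\dim Z}\, c_*(\Eu_Z) \;=\; \textup{Dual-Shadow}_{T^*M}\bl[T^*_Z M]\br.
\]
To prove this I would proceed through the Nash modification $\nu\colon \tilde Z\to Z$ with its tautological subbundle $\xi\subset \nu^*(TM|_Z)$, and use the classical identification of $T^*_Z M$ with the projectivization (inside $T^*M$) of the dual of the cokernel $\nu^*(TM|_Z)/\xi$. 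Computing the $\CC^*$-equivariant intersection with the zero section via a Segre-class calculation expresses $\textup{Shadow}_{T^*M}([T^*_Z M])$ as $\nu_*\bl c(\xi\dual)\cap s(\nu^*(TM|_Z)/\xi, \tilde Z) \br$ capped appropriately, which is exactly the Chern-Mather class $c_M(Z)$ up to the sign $(-1)^{\dim Z}$ coming from identifying cotangent with tangent directions along $Z$. The $\textup{Dual}$ operation, which multiplies the codimension-$2i$ piece by $(-1)^i$, then precisely converts $c_M(Z)$ into $c_M^\vee(Z)$, yielding the desired formula.

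The compact statement is then nearly immediate. Integration over $M$ picks out the zero-dimensional component, on which $\textup{Dual}$ acts trivially, so $\int_M c_*(\gamma) = \int_M \textup{Shadow}_{T^*M}([\textup{CC}(\gamma)])$. Unwinding the definition of shadow, the right-hand side equals the top-degree coefficient of $([\textup{CC}(\gamma)]_{\CC^*}\cdot [M]_{\CC^*})|_{t=1}$ pushed to a point, which is nothing but the ordinary intersection number $\deg\bl[\textup{CC}(\gamma)]\cdot [M]\br$ of the characteristic cycle with the zero section of $T^*M$.

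The main obstacle is the sign-sensitive Nash/Segre calculation identifying the shadow of the conormal cycle with the (signed) Chern-Mather class. Once that single case is settled, linearity across the $\Eu^\vee_Z$ basis and the compatibility of integration with $\textup{Dual}$ take care of everything else.
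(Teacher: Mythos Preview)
The paper does not give a proof of this proposition: it is stated with a direct citation to Aluffi's Lemma~4.3 in \cite{MR2097164} and treated as a known result. So there is no ``paper's own proof'' to compare against. Your strategy---linearity plus reduction to a single conormal cycle $T^*_ZM$ via the basis $\{\Eu^\vee_Z\}$, followed by a Nash blowup/Segre class identification of the shadow of $T^*_ZM$ with the (signed) Chern--Mather class---is exactly the standard route, and is in substance how Aluffi proves the cited lemma.

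Two small corrections to your write-up. First, the $\textup{Dual}$ operation in this paper is defined by multiplying the $2i$-\emph{dimensional} component by $(-1)^i$, not the codimension-$2i$ component; this does not affect your conclusion for the degree-zero part, but your sentence about $\textup{Dual}$ ``converting $c_M(Z)$ into $c_M^\vee(Z)$'' needs the signs tracked more carefully (the $(-1)^{\dim Z}$ and the dimension-wise sign flip interact, and the net effect is precisely $(-1)^{\dim Z}c_M(Z)$, which is what you want). Second, the Segre-class expression you wrote, $\nu_*\bl c(\xi^\vee)\cap s(\nu^*(TM|_Z)/\xi,\tilde Z)\br$, is not quite the right formula for the shadow; the correct identity is that the shadow of $[T^*_ZM]$ in $T^*M$ equals $(-1)^{n-\dim Z}\,\nu_*\bl c(T^*M|_{\tilde Z})\,s(Q^\vee)^{-1}\cap[\tilde Z]\br$ where $Q=\nu^*TM/\xi$, which after simplification yields $\nu_*(c(\xi^\vee)\cap[\tilde Z])$ up to the global sign---this is the dual Chern--Mather class. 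Your sketch has the right pieces but the intermediate expression is off.
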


For holonomic  strongly Euler homogeneous  free divisors  we have  the following Chern class formula from 
\cite[Corollary 4.7 and Corollary 5.10]{LZ24-1}.
\begin{prop}
\label{prop; hSEHchernformula}
Let $D$ be a  holonomic strongly Euler homogeneous  free divisor. Then $M$ is log transverse to $D$ and we have
\[
 c_*(\ind_{M\setminus D}) =\textup{Dual-Shadow}_{T^*M(\log D)}((-1)^n[M])=c(TM(-\log D)) \cap [M] \/.
\]
\end{prop}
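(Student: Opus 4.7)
The plan is a three-step argument: establish log transversality, compute the image of $\textup{CC}(\ind_{M\setminus D})$ in $T^*M(\log D)$ under the logarithmic map $j\colon T^*M\to T^*M(\log D)$, and apply the shadow formalism. First I would combine the three hypotheses as follows. Freeness makes $\Der_M(-\log D)$ locally free of rank $n$, so that $j$ is a morphism of rank-$n$ bundles; holonomicity makes the logarithmic stratification $\{S_\alpha\}$ locally finite; and strong Euler homogeneity supplies at each $p\in D$ an Euler vector field $\chi\in\mathfrak{m}_{M,p}\cdot\Der_{M,p}$ with $\chi(h)=h$. By Saito's description of log strata, $T_pS_\alpha$ is exactly the image of the evaluation $\Der_{M,p}(-\log D)\to TM|_p$, so every covector in $T^*_{\overline{S}_\alpha}M$ annihilates $TM(-\log D)|_p$ and hence lies in $\ker j$. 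This is the log transversality statement of \cite[Definition 4.1]{LZ24-1}.

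Second, using $\ind_{M\setminus D}=\ind_M-\ind_D$ together with $\ind_M=(-1)^n\Eu^\vee_M$, I would decompose
\[
\textup{CC}(\ind_{M\setminus D}) = (-1)^n T^*_M M + \sum_{\alpha : S_\alpha\subset D} m_\alpha\, T^*_{\overline{S}_\alpha}M,
\]
where the sum runs over positive-codimension logarithmic strata and the multiplicities $m_\alpha$ come from the Euler-obstruction expansion of $-\ind_D$. By Step 1, each conormal $T^*_{\overline{S}_\alpha}M$ with $S_\alpha\subset D$ is collapsed to the zero section of $T^*M(\log D)$ under $j_*$; since $j$ restricts to an isomorphism over $M\setminus D$, a generic-point comparison then forces
\[
j_*\bigl[\textup{CC}(\ind_{M\setminus D})\bigr] = (-1)^n[M] \in \cZ\bigl(T^*M(\log D)\bigr).
\]

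Third, since the shadow operation on $\CC^*$-invariant cycles depends only on the $\CC^*$-equivariant class and on intersection with the zero section, both of which are respected by the bundle map $j$, we obtain
\[
\textup{Dual-Shadow}_{T^*M}\bigl([\textup{CC}(\ind_{M\setminus D})]\bigr)=\textup{Dual-Shadow}_{T^*M(\log D)}\bigl((-1)^n[M]\bigr).
\]
The left side is $c_*(\ind_{M\setminus D})$ by Proposition~\ref{prop; shadow}, and the right side equals $c(TM(-\log D))\cap[M]$ by the standard shadow-of-the-zero-section calculation applied to the rank-$n$ bundle $T^*M(\log D)=TM(-\log D)^\vee$; this is the same identity that recovers $c(TM)\cap[M]$ in the smooth case of Proposition~\ref{prop; shadow}.

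The main obstacle is the cycle-level identity $j_*[\textup{CC}(\ind_{M\setminus D})]=(-1)^n[M]$ in Step 2. The set-theoretic collapse of the positive-codimension conormals onto the zero section is immediate from log transversality, but matching the exact integer multiplicities requires that $j$, though not proper as a bundle map, be proper on the support of $\textup{CC}(\ind_{M\setminus D})$; holonomicity provides this by making the stratification locally finite, so that the push-forward reduces to a finite sum of proper contributions on any compact subset of $M$. The compatibility of the shadow with $j_*$ in Step~3, while conceptually clean, also deserves a careful verification via the $\CC^*$-equivariant formalism of \cite{AMSS23}.
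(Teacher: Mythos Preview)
The paper does not give its own proof of this proposition; it is cited from \cite[Corollary 4.7 and Corollary 5.10]{LZ24-1}. Your outline follows a reasonable strategy, but Step~3 contains a genuine error: the shadow operation does \emph{not} commute with push-forward along $j$. Intersection with the zero section of $T^*M$ is not the pull-back of intersection with the zero section of $T^*M(\log D)$, because $j^{-1}(0_{T^*M(\log D)})=\ker j$ strictly contains $0_{T^*M}$ over $D$. Concretely, take the single component $C=(-1)^n[T^*_MM]$: then $j_*[C]=(-1)^n[M]$, yet $\textup{Dual-Shadow}_{T^*M}([C])=c(TM)\cap[M]$ while $\textup{Dual-Shadow}_{T^*M(\log D)}\bigl((-1)^n[M]\bigr)=c(TM(-\log D))\cap[M]$, and these differ whenever $D\neq\emptyset$. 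The conormals $T^*_{\overline{S}_\alpha}M$ with $S_\alpha\subset D$ contribute nontrivially to the shadow in $T^*M$ precisely in order to correct $c(TM)\cap[M]$ down to $c(TM(-\log D))\cap[M]$; collapsing them via $j_*$ before taking the shadow discards exactly the information you need.

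There is a second gap in Step~2. You list strong Euler homogeneity among the hypotheses but never actually invoke it; the claim that $\textup{CC}(\ind_D)$ decomposes along conormals to closures of \emph{logarithmic} strata is where it must enter. The characteristic cycle is governed by a Whitney stratification of $D$, and tying that to the logarithmic stratification requires the Euler vector field in an essential way. Indeed, for free holonomic divisors that are not strongly Euler homogeneous (non-quasihomogeneous plane curves already furnish examples) the conclusion $c_*(\ind_{M\setminus D})=c(TM(-\log D))\cap[M]$ can fail, so the hypothesis is not decorative. The approach in \cite{LZ24-1} is different: one establishes the cycle-level identity $\textup{CC}(\ind_{M\setminus D})=(-1)^n[j^{-1}(0)]$ inside $T^*M$ itself, and then computes its shadow there, rather than transporting the problem to $T^*M(\log D)$.
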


Recall that the jet characteristic cycle and the ordinary characteristic cycle are locally identical but globally different. The difference is captured by the following twisted shadow formula from \cite[Lemma 2.2]{AMSS23}.
\begin{prop}
\label{prop; jettwistformula}
Let $\textup{Shadow}(\textup{CC}(\gamma))=\sum_j\alpha_j$. We have
\[
\int_M \textup{Shadow}_{T^*M\otimes L} \left([\textup{JCC}(\gamma)] \right)=\deg \left([\textup{JCC}(\gamma)]\cdot [M]  \right) = \sum^{n}_{j=0}\int_M c_1(L)^{j} \cap \alpha_j \/.
\]
where $M$ is regarded as the zero section of $T^*M\otimes L$.
\end{prop}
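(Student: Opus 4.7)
The plan is to prove Proposition~\ref{prop; jettwistformula} by expressing both sides as $\CC^*$-equivariant intersection numbers and exploiting the precise relationship between the equivariant Chern classes of $T^*M$ and $T^*M \otimes L$. Since the $\CC^*$-action scales the fibers of both bundles with the same weight, twisting by $L$ affects the zero section's equivariant self-intersection only through the Chern classes of $L$. I would start from the characterization in Proposition~\ref{prop; shadow}, namely $\textup{Shadow}_E([C]) = ([C]_{\CC^*}\cdot[M]_{\CC^*})|_{t=1}$, and apply it to both the twisted and untwisted situations.

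More concretely, the equivariant class of the zero section in a rank-$n$ equivariant vector bundle $E$ is the equivariant top Chern class of $E$. For $E = T^*M$ with Chern roots $a_1,\dots,a_n$, this reads $\prod_i (a_i + t)$, while for $E = T^*M\otimes L$ the Chern roots become $a_i + c_1(L)$, giving $\prod_i (a_i + c_1(L) + t)$. By Proposition~\ref{prop; jetconormal}, the equivariant cycles $[\textup{JCC}(\gamma)]_{\CC^*}$ and $[\textup{CC}(\gamma)]_{\CC^*}$ agree locally over any open set trivializing $L$, so globally their equivariant intersections with the zero section differ only through these extra $c_1(L)$ contributions coming from the gluing.

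Unraveling this, the twisted equivariant intersection $([\textup{JCC}(\gamma)]_{\CC^*} \cdot [M]_{\CC^*})|_{t=1}$ can be identified with $([\textup{CC}(\gamma)]_{\CC^*} \cdot [M]_{\CC^*})$ under a substitution of the form $t \mapsto t + c_1(L)$, evaluated at $t=1$. Expanding this $\CC^*$-equivariant product as a polynomial in $t$ and applying the binomial theorem, the coefficient of $c_1(L)^j$ pairs with exactly the degree-$j$ component $\alpha_j$ of $\textup{Shadow}_{T^*M}([\textup{CC}(\gamma)])$. Integrating over $M$ then yields the stated identity. The main obstacle is making rigorous the local-to-global compatibility between the two equivariant cycles; one can do this either through a $\CC^*$-equivariant descent argument, or by passing to projective completions and using the canonical isomorphism $\PP(T^*M\otimes L \oplus \cO_M) \cong \PP(T^*M \oplus L^\vee)$ together with the fact that the tautological line bundles differ by $\pi^*L$, which converts the verification into a standard Chern-class bookkeeping on a $\PP^n$-bundle.
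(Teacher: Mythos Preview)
The paper does not prove this proposition; it is quoted directly from \cite[Lemma~2.2]{AMSS23}, so there is no in-paper argument to compare your proposal against. Your plan is sound and is in fact how that lemma is established. The decisive point is the one you place at the end: the conical cycles $\textup{CC}(\gamma)$ and $\textup{JCC}(\gamma)$ share the same projectivization under the canonical identification $\PP(T^*M)\cong\PP(T^*M\otimes L)$ (or, in the completed picture, $\PP(T^*M\otimes L\oplus\cO_M)\cong\PP(T^*M\oplus L^\vee)$), while the two tautological bundles $\cO(1)$ differ by $\pi^*L$. Tracking this shift through the shadow construction and then extracting the $H_0$-part after setting $t=1$ yields exactly $\sum_j\int_M c_1(L)^j\cap\alpha_j$, because $\alpha_j\in H_{2j}(M)$ forces $c_1(L)^j\cap\alpha_j$ to be the only degree-zero contribution in the binomial expansion.

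One caution: your sentence ``the equivariant cycles agree locally, so globally their intersections differ only through $c_1(L)$ contributions'' is a heuristic rather than an argument, since the two cycles live in different ambient bundles and local agreement alone does not pin down a global comparison of equivariant classes. You correctly flag this gap and point to the projective-completion route as the remedy; that is precisely what is needed to make the substitution $t\mapsto t+c_1(L)$ rigorous.
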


\subsection{Vector Fields}

A vector field $v$ on an oriented $n$-dimensional $C^\infty$ manifold $N$ is a continuous section of $TN$. The singularity locus of $v$, denoted by $\Sing(v)$, is the set of zeros of $v$. 
\[
\Sing(v):=\{x\in N \ \vert \ v(x)=0\}. 
\]

\begin{defi}
The Poincar\'e-Hopf Index $\Ind_{\PH}(v, x)$ of the vector field $v$ at an isolated singularity $x\in \Sing(v)$ is the degree of the map $\frac{v}{\left\lVert v\right\rVert}: S^{n-1}_\epsilon \to S^{n-1}_1$ from a sufficiently smalll sphere of radius $\epsilon$ around $x$ in $N$ to the unit sphere in $\mathbb{R}^m$. Here the orientation of $S^{n-1}_\epsilon$ is induced from the orientation of the ambient manifold $N$.
\end{defi}

\begin{defi}
Let $X$ be an anlytic subvariety of a complex manifold $M$. A stratified vector field $v$ on $X$ is a continuous section of $TM\vert_X$ together with a Whitney stratification $\sS=\{\sS_\alpha\}$ of $X$ such that $v$ is tangent to $\sS_\alpha$ for every $\alpha$.
\end{defi}

A holomorphic vector field on a complex manifold $M$ is a holomorphic section of $TM$. Fix a holomorphic cordinate system $x_1,\ldots,x_n$ aroung a point $x\in M$, a holomorphic vector field $v$ can be written locally as $v=\sum_{i=1}^n a_i\partial_{x_i}$ with $a_i \in \sO_{\CC^n,0}$. We denote the ideal $(a_1,\ldots,a_n)$ of $\sO_{\CC^n,0}$  by $J_x(v)$, it  has finite colength if and only if $v$ has isolated singularity at $x$.

\begin{defi}
The multiplicity $\mu_x(v)$ of the holomorphic vector field $v$ at an isolated singularity $x$ is the number $\dim_{\CC} \frac{\sO_{\CC^n, 0}}{J_x(v)}$.
\end{defi} 

It is well known that $\Ind_{\PH}(v, x)=\mu_x(v)$ if $x$ is an isolated singularity of a holomorphic vector field $v$ (see for example \cite[p.86 Theorem 1]{MR777682}).

\subsection{One-dimensional  Singular Foliations}
Let $M$ be a  complex manifold and $TM$ be the tangent bundle of $M$. Let $L$ be a line bundle on $M$.
\begin{defi} 
A one-dimensional (singular) holomorphic foliation  $\cF$ on $M$  with tangent bundle $L$  is
given by the following data:
\begin{enumerate}
\item an open covering $\{U_\alpha|\alpha\in I\}$ of $M$;
\item a holomorphic vector field $\nu_\alpha\in Der_M(U_\alpha)$ for each $\alpha\in I$;
\item for every  intersection $U_\alpha\cap U_\beta\neq \emptyset$, a holomorphic function 
$g_{\alpha\beta}\in \sO_M^*(U_\alpha\cap U_\beta)$ such that 
\[
\nu_\alpha|_{U_\alpha\cap U_\beta} =g_{\alpha\beta}\cdot \nu_\beta|_{U_\alpha\cap U_\beta} \text{ and }  \{g_{\alpha\beta}|\alpha, \beta\in I\}\in H^1(M, \sO_M^*) \text{ is a cocycle of }   L^\vee \/.
\]
\end{enumerate}
Equivalently, $\cF$ is given by  a 
global holomorphic section in $H^0(M, TM\otimes L^\vee)=\Hom(L, TM)$. We will denote  this   global holomorphic section by $s_\cF$.  
\end{defi}

\begin{defi}
The singular set of $\cF$ is defined by 
\[
\Sing(\cF):=\{s_\cF=0\} =\cup_{\alpha\in I}  \Sing(\nu_\alpha) \/.
\]
\end{defi}

\begin{defi}
Given a reduced divisor $D\subset M$, a logarithmic one-dimensional foliation along $D$ (with tangent bundle $L$) is 
 a one-dimensional foliation $\cF$ such that $s_\cF$ factors as 
 \[
 s_\cF\colon M \longrightarrow TM(-\log D)\otimes L^\vee \longrightarrow TM\otimes L^\vee  \/,
 \] 
 where the last morphism is induced by $\Der_M(-\log D)\hookrightarrow \Der_M$.

 Equivalently this says that 
$\nu_\alpha\in \Der_{U_\alpha}(-\log D\cap U_\alpha)$ are logarithmic vector fields for each $\alpha\in I$. 
\end{defi}

\section{Euler obstructions for vector fields} 
\label{sec; Eulerobstruction}

Let $W$ be an open subset of $\CC^n$ and let $X$ be a purely $d$-dimensional analytic subvariety of $W$.
Let $\pi\colon Z\to X$ be the   Nash blowup of $X$ and $\cT$ be the Nash tangent bundle. Let  $\tilde{\nu}$ be a continuous vector field on $W$ tangent to $X$ at smooth points of $X$ and let $\nu=\tilde{\nu}|_X$ be its restriction on $X$. We always assume $0\in X$ and $\Sing(\nu)=\{0\}$.

Such $\nu$ naturally lifts to a continuous section $\chi$ of the Nash tangent bundle $\cT$. 
Because $\Sing(\nu)=0$, $\chi$  only vanishes  at $Z_0:=\pi^{-1}(0)$. Take a ball $B_\epsilon$ of radius $\epsilon$  small enough such that 
$S_\epsilon=\partial B_\epsilon$ intersects each stratum  $\sS_\alpha$ transversely. We define $V_\epsilon:=\pi^{-1}(X\cap B_\epsilon)$ and $\partial V_\epsilon:=\pi^{-1}(X\cap S_\epsilon)$.

\begin{defi} 
The local Euler obstruction $\textup{Eu}_X^{\tilde{\nu}}(0)$ of $\tilde{\nu}$ at $0$ is defined to be the obstruction to extending $\chi$ as a nowhere zero section of $\cT$ from $\partial V_\epsilon$ to $V_\epsilon$. Equivalently,  
\[
\textup{Eu}_X^{\tilde{\nu}}(0)= \int \chi^*(\tau)\cap [Z]. 
\]
where $\tau$ is the Thom class of $\cT$. Note that our assumptions on $\nu$ implies that $\chi^*(\tau)\in H^{2d}(V_\epsilon, \partial V_\epsilon,\mathbb{Z})$ so that $\textup{Eu}_X^{\tilde{\nu}}(0)$ is well-defined. Clearly $\textup{Eu}_X^{\tilde{\nu}}(0)$ depends only on $\nu$. 
\end{defi}

\begin{prop}
\label{prop; EulerObs}
We have the following Gonz\'{a}lez-Sprinberg type formula when $\tilde{\nu}$ is holomorphic.
\[
\textup{Eu}_{X}^{\tilde{\nu}}(0)=\int_{Z_0}  [Z]\cdot [\chi(Z)]=\int_{Z_0} c(\cT)\cap s(\tilde{Z}_0, Z)  \/.
\]
Here $\tilde{Z}_0$ is the intersection scheme of $\chi(Z)$ with the zero section of $\cT$ and $\int_{Z_0}: H_0(Z_0) \to \mathbb{Z}$ is the pushdown morphism induced by $Z_0\to pt$.
\end{prop}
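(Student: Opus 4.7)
The proposition packages two comparisons, which I would handle separately. The first, $\textup{Eu}_X^{\tilde\nu}(0)=\int_{Z_0}[Z]\cdot[\chi(Z)]$, translates the obstruction-theoretic definition into an analytic local intersection number in the total space of $\cT$. The second, $\int_{Z_0}[Z]\cdot[\chi(Z)]=\int_{Z_0}c(\cT)\cap s(\tilde{Z}_0,Z)$, is a direct application of Fulton's excess intersection formula for a section of a vector bundle.

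For the first equality I would use that the Thom class $\tau\in H^{2d}(\cT,\cT\setminus Z)$ is the Borel--Moore Poincar\'e dual of the zero section $Z\hookrightarrow\cT$. When $\tilde\nu$ is holomorphic and tangent to $X$ along $X_{sm}$, the lift $\chi$ is a holomorphic section of $\cT$, so $\chi(Z)$ is an honest $d$-dimensional analytic cycle in the total space of $\cT$. The pairing $\chi^*(\tau)\cap[Z]$ then represents the refined intersection of $\chi(Z)$ with $[Z]$ as a Borel--Moore class. Because $\chi$ vanishes exactly on $Z_0$ by hypothesis, this class is supported on $Z_0$, and the pushforward to a point produces the analytic intersection degree $\int_{Z_0}[Z]\cdot[\chi(Z)]$.

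For the second equality, the setup fits Fulton's framework exactly: the zero section $Z\hookrightarrow\cT$ is a regular embedding of codimension $d$ with normal bundle $\cT$, and the section $\chi$ cuts out the zero scheme $\tilde{Z}_0$. Fulton's refined Gysin formula then gives
\[
[Z]\cdot[\chi(Z)] \;=\; \bigl\{c(\cT)\cap s(\tilde{Z}_0,Z)\bigr\}_{0} \qquad \text{in } H_0(\tilde{Z}_0),
\]
and taking degrees yields the identity, since $\tilde{Z}_0$ is supported on $Z_0$.

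The main obstacle is the first equality in the singular setting: the Nash blowup $Z$ is typically singular and $Z_0$ may be positive-dimensional, so one cannot rely on naive transversality. The argument rests on two facts. First, $\cT$ is a genuine vector bundle on $Z$ by construction of the Nash blowup, so the Thom class and the topological pairing are well-defined regardless of the singularities of $Z$. Second, the holomorphy of $\chi$ (inherited from $\tilde\nu$) is precisely what lets the topological pairing $\chi^*(\tau)\cap[Z]$ be identified with the algebraic intersection cycle. I would justify this by representing the Thom cocycle as Poincar\'e dual to the zero section in a tubular neighborhood and noting that holomorphy forces the pullback, on a punctured neighborhood of $\chi^{-1}(Z)$, to be represented by the analytic cycle $\chi(Z)\cap Z$ supported over $Z_0$. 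This is the standard mechanism by which a holomorphic section of a holomorphic bundle realizes its topological Euler number as a sum of local analytic intersection multiplicities.
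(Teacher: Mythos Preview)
Your proposal is correct and follows essentially the same route as the paper's proof. The paper is even more terse: for the first equality it simply notes that $\chi$ is holomorphic and that the Thom class of $\cT$ is represented by the zero section $[Z]$, so the definition $\int\chi^*(\tau)\cap[Z]$ immediately becomes $\int_{Z_0}[Z]\cdot[\chi(Z)]$; for the second equality it cites Fulton's Proposition~6.1(a) (the self-intersection/excess formula expressing the intersection class via $c(\cT)\cap s(\tilde Z_0,Z)$) together with \S19.2 (the passage to Borel--Moore homology), which is exactly the refined Gysin computation you spelled out.
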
 

\begin{proof}
The section $\chi$ is holomorphic when $\tilde{\nu}$ is holomorphic. Since the Thom class of $\cT$ is represented by $[Z]$, $\textup{Eu}_{X}^{\tilde{\nu}}(0)=\int [Z]\cdot [\chi(Z)]$ follows from the definition of $\textup{Eu}_{X}^{\tilde{\nu}}(0)$. The other expression of $\textup{Eu}_{X}^{\tilde{\nu}}(0)$ follows from \cite[Proposition 6.1(a)]{MR1644323} and \cite[\S 19.2]{MR1644323}.
\end{proof}

Any Hermitian metric on $\CC^n$ induces an $\mathbb{R}$-linear $C^\infty$ isomorphism $T\CC^n \cong T^*\CC^n$. Under this isomorphism $\tilde{\nu}$ corresponds to a continuous section $\tilde{\sigma}\colon W\to T^*W$.

\begin{prop}
$\tilde{\sigma}(W)$ only intersects $T_X^*W$ at $(0,0)$.
\end{prop}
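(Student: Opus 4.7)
The plan is to exploit the fact that, under the $\mathbb{R}$-linear $C^\infty$ isomorphism $T\CC^n \cong T^*\CC^n$ coming from a Hermitian metric, the conormal space $T^*_XW$ corresponds pointwise to the orthogonal complement of (limits of) tangent spaces to $X$. Since $\tilde{\nu}$ is tangent to $X_{sm}$, the graph $\tilde{\sigma}(W)$ should meet $T^*_XW$ only where $\tilde{\nu}$ becomes orthogonal to itself, i.e.\ zero.

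First I would observe that $T^*_XW$ lies over $X$, so any intersection point must have the form $(x,\tilde{\sigma}(x))$ with $x\in X$. If $x\in X_{sm}$, the argument is immediate: $\tilde{\sigma}(x)\in T^*_{X_{sm}}W$ means that the metric dual of $\tilde{\sigma}(x)$, which is $\tilde{\nu}(x)$, is orthogonal to $T_xX$, while tangency forces $\tilde{\nu}(x)\in T_xX$; hence $\tilde{\nu}(x)=0$, and so $x\in\Sing(\nu)=\{0\}$.

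The main step is the case $x\in X\setminus X_{sm}$, where we must use the definition of $T^*_XW$ as a closure. Pick a sequence $(x_n,\xi_n)\in T^*_{X_{sm}}W$ with $(x_n,\xi_n)\to (x,\tilde{\sigma}(x))$, and let $v_n\in T_{x_n}\CC^n$ be the metric dual of $\xi_n$, so that $v_n \perp T_{x_n}X$. By tangency, $\tilde{\nu}(x_n)\in T_{x_n}X$, and therefore
\[
\langle \tilde{\nu}(x_n),\, v_n \rangle = 0 \quad\text{for every } n.
\]
Now $v_n\to\tilde{\nu}(x)$ by continuity of the metric duality, and $\tilde{\nu}(x_n)\to\tilde{\nu}(x)$ by continuity of $\tilde{\nu}$; passing to the limit gives $\|\tilde{\nu}(x)\|^2 = 0$, so $\tilde{\nu}(x)=0$ and again $x=0$. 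Since $\tilde{\sigma}(0)=0$, the only possible intersection point is $(0,0)$, which manifestly lies in both $\tilde{\sigma}(W)$ (as $\tilde{\sigma}$ vanishes at $0$) and in $T^*_XW$ (as the zero covector over $0\in X$).

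The only delicate point is the limit argument at singular $x$: it crucially uses that the tangency condition is a \emph{pointwise} condition satisfied at every nearby smooth point $x_n$, so that the orthogonality $\tilde{\nu}(x_n)\perp v_n$ holds before taking limits. I expect this to go through cleanly because the Hermitian inner product, the metric duality, and the vector field $\tilde{\nu}$ are all continuous, making the passage to the limit in $\langle\tilde{\nu}(x_n),v_n\rangle=0$ automatic; no stratified Whitney-type condition on $X$ is needed.
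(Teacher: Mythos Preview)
Your proof is correct and follows essentially the same idea as the paper: both arguments boil down to observing that if $(x,\tilde{\sigma}(x))\in T^*_XW$ then $\tilde{\sigma}(x)$ annihilates some limiting tangent space $T$ at $x$, while continuity and tangency force $\tilde{\nu}(x)\in T$, so $h(\tilde{\nu}(x),\tilde{\nu}(x))=0$. The paper's version is simply a more compressed formulation---it invokes limiting tangent spaces directly rather than writing out your sequence $(x_n,\xi_n)$, and does not separate the smooth and singular cases---but the content is the same.
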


\begin{proof}
Denote a chosen Hermitian metric on $\CC^n$ by $h( \cdot,\cdot )$. Then $\tilde{\sigma}(x)=h_x ( \cdot, \tilde{\nu}(x))$ for any $x\in X$.

By the definition of conormal spaces, $(x, \tilde{\sigma}(x) )\in T_X^*W$ if $\tilde{\sigma}(x)$ annihilates some limiting tangent space $T$ at $x\in X$.  Since $\tilde{\nu}(x) \in T$ for any limiting tangent space $T$, 
\[
0=\tilde{\sigma}(x)\left(\tilde{\nu}(x)\right)=h_x(\tilde{\nu}(x),\tilde{\nu}(x)).
\] 
Therefore $\tilde{\nu}(x)=\nu(x)=0$, and hence $x=0$ because $\Sing(\nu)=\{0\}$.
\end{proof} 
 
\begin{theo}\label{theo:microlocal for Euobs}  
We have the following microlocal intersection formula
\[
\textup{Eu}_{X}^{\tilde{\nu}}(0)=(-1)^{d}  \sharp_0 \left( [T_X^*W]\cdot [\tilde{\sigma}(W)]  \right) 
\]
where $\sharp_0$ means the intersection number at $(0,0) \in T^*W$.
\end{theo}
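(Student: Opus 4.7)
The plan is to lift everything to the Nash blowup $\pi\colon Z\to X$ and use the Hermitian metric to identify the microlocal intersection with the Euler obstruction up to an orientation sign. Over $Z$, the inclusion $\cT\hookrightarrow\pi^*TW$ dualizes to the short exact sequence
\[
0 \to \hat N \to \pi^*T^*W \to \cT^\vee \to 0,
\]
in which $\hat N$ is the rank-$(n-d)$ conormal subbundle. The projection $p\colon\pi^*T^*W\to T^*W$ sends $\hat N$ properly and birationally onto $T_X^*W$: this is an isomorphism over $X_{sm}$ and is proper because the fibers sit inside the compact Nash fibers $\pi^{-1}(x)$. The Hermitian metric furnishes an orthogonal splitting $\pi^*T^*W=\hat N\oplus\hat N^\perp$ in which $\hat N^\perp$ is a complex rank-$d$ subbundle canonically identified with $\cT^\vee$, together with a $\mathbb{C}$-antilinear isomorphism $\phi\colon\cT\xrightarrow{\sim}\hat N^\perp$, $v\mapsto h(\,\cdot\,,v)$. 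Since $\tilde\nu(\pi(z))\in T_z$ for every $z\in Z$, the pulled-back section $\pi^*\tilde\sigma$ takes values in $\hat N^\perp$ and coincides with $\phi\circ\chi$ under this identification.

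The preceding proposition shows $\tilde\sigma(W)\cap T_X^*W=\{(0,0)\}$, and its preimage under $p$ is the exceptional locus $Z_0$ sitting inside the zero section of $\hat N$. The projection formula for proper birational maps in Borel--Moore homology, together with $p_*[\hat N]=[T_X^*W]$ and $p_*[\pi^*\tilde\sigma(Z)]=[\tilde\sigma(X)]$, then yields
\[
\sharp_0\bigl([T_X^*W]\cdot[\tilde\sigma(W)]\bigr) \;=\; \sharp_{Z_0}\bigl([\hat N]\cdot[\pi^*\tilde\sigma(Z)]\bigr).
\]
Reading off the short exact sequence, the right-hand intersection is the local Euler number along $Z_0$ of the section induced by $\pi^*\tilde\sigma$ in the quotient $\cT^\vee\cong\hat N^\perp$; via the metric splitting this induced section is exactly $\phi\circ\chi$.

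The sign $(-1)^d$ then issues from comparing Euler numbers across the antilinear identification $\phi$: a $\mathbb{C}$-antilinear automorphism of $\mathbb{C}^d$ has real determinant of sign $(-1)^d$ (complex conjugation on $\mathbb{C}^d$ reverses the standard real orientation by exactly this factor), hence the local Euler number of $\phi\circ\chi$ in $\hat N^\perp$ differs from that of $\chi$ in $\cT$ by $(-1)^d$. Combining with Proposition~\ref{prop; EulerObs}, which identifies the Euler number of $\chi$ along $Z_0$ with $\textup{Eu}_X^{\tilde\nu}(0)$, will deliver the claimed equality. The main technical obstacle will be the intersection transfer when $Z_0$ has positive dimension: the right-hand intersection is then a class in $H_0^{BM}(Z_0)$ rather than a transverse count of points, so a rigorous justification requires the excess-intersection formalism together with $p^{-1}(0,0)=Z_0$ to ensure that every contribution on the cotangent side is captured with the correct multiplicity.
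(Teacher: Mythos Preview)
Your approach is essentially the same as the paper's: lift to the Nash blowup, use the short exact sequence $0\to \hat N\to\pi^*T^*W\to\cT^\vee\to 0$, transfer the intersection from $T^*W$ up to $\pi^*T^*W$ via the projection formula, pass to $\cT^\vee$, and pick up the sign $(-1)^d$ from the conjugate-linear identification $\cT\cong\cT^\vee$. The paper phrases the middle step slightly differently: rather than invoking the metric splitting $\pi^*T^*W=\hat N\oplus\hat N^\perp$ and the fact that $\pi^*\tilde\sigma$ lands in $\hat N^\perp$, it simply observes that $[\hat N]=r^*[Z]$ (pullback of the zero section under the quotient map $r$) and applies the push-pull identity $r_*([\tilde\rho(Z)]\cdot r^*[Z])=[\rho(Z)]\cdot[Z]$. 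Your splitting observation is correct and amounts to the same thing.

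Two small points. First, your projection-formula step is written with two pushforwards, but the formula you need is $p_*\bigl([\hat N]\cdot p^*[\tilde\sigma(W)]\bigr)=p_*[\hat N]\cdot[\tilde\sigma(W)]$; the paper records this as $p_*[\hat N]=[T^*_XW]$ together with $p^{-1}(\tilde\sigma(W))=\tilde\rho(Z)$, which is the correct pairing of push and pull. Second, the identity $\textup{Eu}_X^{\tilde\nu}(0)=\int_{Z_0}[Z]\cdot[\chi(Z)]$ comes straight from the Thom-class definition and does not require the holomorphicity hypothesis of Proposition~\ref{prop; EulerObs}; only the Segre-class half of that proposition needs $\tilde\nu$ holomorphic. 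With these adjustments your argument matches the paper's, and the ``technical obstacle'' you flag is handled by the projection formula in Borel--Moore homology without further excess-intersection machinery.
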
 

The intersection product in the theorem is defined by cup product  (see \cite[9.2.16]{MR1299726} and \cite[p.286]{MR2031639})
\[
H^{2n}_{T^*_XW}(T^*W,\mathbb{Z}) \otimes H^{2n}_{\tilde{\sigma}(W)}(T^*W,\mathbb{Z}) \xrightarrow{\cup} H^{4n}_{(0,0)}(T^*W,\mathbb{Z})\cong \mathbb{Z}.
\]
We caution the reader that in the identification $ H^{4n}_{(0,0)}(T^*W,\mathbb{Z})\cong \mathbb{Z}$, loc.cit used a real orientation of $T^*W$  which is $(-1)^n$ times the complex orientation of $T^*W$. Also, the orientation of characteristic cycles used in loc.cit is a real orientation. More precisely, $[T^*_XW]_r=(-1)^{n-d} \cdot [T^*_XW]_c$ (see \cite[p.285]{MR2031639}, \cite[Remark 5.0.3]{MR2031639}). Therefore, cancelling out the $(-1)^n$ factor we have
\[
\sharp_{0,r}\left([T^*_XW]_r \cap [\tilde{\sigma}(W)]\right) = (-1)^d \sharp_{0,c}\left([T^*_XW]_c \cap [\tilde{\sigma}(W)]\right)=\textup{Eu}_{X}^{\tilde{\nu}}(0).
\]
This explains the appearance of the sign $(-1)^d$.

\begin{proof}
Consider the short exact sequence on the Nash blowup $Z$:
\[
\begin{tikzcd}
0 \arrow[r] & Q^\vee \arrow[r] &  \pi^*T^*W  \arrow[r, "r"] & \cT^\vee \arrow[r] & 0
\end{tikzcd}
\] 
Note that by definition $T^*_XW$ is the proper image of $Q^\vee$ under the projection $p:\pi^*T^*W \to T^*W$.

The section $\tilde{\sigma}$ is pulled back to a section $\tilde{\rho}\colon Z\to \pi^*T^*W$, which  induces a section  
  $\rho  \colon Z\to \cT^\vee$ by composing with the projection $\pi^*T^*W \to \cT^\vee$.  
 We   have the commutative diagram below.
\[
\begin{tikzcd} 
Z_0 \arrow[d, " "] \arrow[r,  ""] & Z \arrow[d, "\tilde{\rho}"'] \arrow[dr, "\rho"] \\
  Q^\vee \arrow[d, "p'"] \arrow[r, hook, ""] &  \pi^*T^*W \arrow[d,  "p"] \arrow[r,   "r"] 
  &  \cT^\vee    \\
  T_X^*W \arrow[r, hook, ""] & T^*W    &   
 \end{tikzcd}\/.
\]
The morphism $r$ restricts to an isomorphism on $\tilde{\rho}(Z)$. 
Regard $Z$ as the zero section of $\cT^\vee$, we have 
\[r_*\left([\tilde{\rho}(Z)]\cdot [Q^\vee] \right)
 = r_*\left([\tilde{\rho}(Z)]\cdot r^*[Z] \right)  
 = r_*[\tilde{\rho}(Z)]\cdot  [Z]
 = [\rho(Z)]\cdot  [Z] \/.
\]
The Hermitian metric induces an $\mathbb{R}$-linear $C^\infty$ isomorphism $\cT \cong \cT^\vee$ under which $\chi$ corresponds to $\rho$. The isomorphism $\cT \cong \cT^\vee$ sends $[\chi(Z)]$ to $(-1)^d[\rho(Z)]$ where both $[\chi(Z)]$ and $[\rho(Z)]$ are equipped with the orientation induced from the complex orientation of $Z$. We have
\[
\int_{Z_0} \left([Z]\cdot [\chi(Z)]\right) =(-1)^{d} \int_{Z_0}  \left( [Z]\cdot [\rho(Z)] \right)
 =(-1)^{d} \int_{Z_0} \left( [\tilde{\rho}(Z)]\cdot [Q^\vee] \right) =(-1)^d \sharp_0 \left([T^*_XW]\cdot [\tilde{\sigma}(W)]\right)  \/.
\]
The last equality follows from the projection formula because $p_*[{Q^\vee}]=[T^*_XW]$ and $p^{-1}(\tilde{\sigma}(W))=\tilde{\rho}(Z)$. 
\end{proof} 
\begin{rema}
The right hand side of the  formula in Theorem~\ref{theo:microlocal for Euobs} is independent of the choice of the Hermitian metric used to define $\tilde{\sigma}$. In fact, given two Hermitian metrics $h_1$ and $h_2$ one may form a smooth family of  Hermintian metrics $th_1+(1-t)h_2$ and obtain a smooth family of section $\tilde{\sigma}_t$. 
The intersection product $\sharp_0 \left( [T_X^*W]\cdot [\tilde{\sigma}_t(W)]  \right) $ stays constant by the law of conservation of numbers.
\end{rema}

\begin{defi}
Given a  constructible function $\gamma=\sum_{\alpha} m_\alpha\cdot \Eu_{X_\alpha}$  on $X$, 
we say the vector field $\tilde{\nu}$ is tangent to $\gamma$, 
if $\nu$ is tangent to  (a Zariski-dense open part of) the smooth part of $X_\alpha$ for every $\alpha$. 
\end{defi}
For example, if $X$ admits a complex analytic Whitney stratification $\{\sS_\alpha\}$ and $\tilde{\nu}$ is tangent to all $\sS_\alpha$, then $\tilde{\nu}$ is tangent to any $\sS$-constructible function $\gamma$ (see \cite[10.2.1]{MS22} for the definition of $\sS$-constructibility).

\begin{defi} 
Suppose $\tilde{\nu}$ is tangent to $\gamma$. We define the local Euler obstruction of $\tilde{\nu}$ at $0$ weighted by $\gamma$ by 
\[
\Eu (\tilde{\nu},\gamma,0):=\sum_{\alpha} m_\alpha \cdot \Eu_{X_\alpha}^{\tilde{\nu}}(0) \/.
\] 
\end{defi}

This notation is a bit awkward since by definition we have $\Eu_X^{\tilde{\nu}}(0)=\Eu(\tilde{\nu},\Eu_X,0)$.

\begin{coro}\label{coro:intersection formula for Euobs}
If $\tilde{\nu}$ is tangent to the constructible function $\gamma$, then
\[
\Eu(\tilde{\nu},\gamma,0)=\sharp_0 \left(\textup{CC}(\gamma)\cdot [\tilde{\sigma}(W)]\right).
\]
If moreover $\tilde{\nu}$ is holomorphic, then the local intersection number above can be computed algebraically according to Proposition \ref{prop; EulerObs}.
\end{coro}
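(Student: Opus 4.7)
The corollary should fall out of Theorem \ref{theo:microlocal for Euobs} by linearity, once we reconcile the sign convention in Definition \ref{defn; JCC} (written in terms of $\Eu^\vee$) with the expansion of $\gamma$ in terms of ordinary local Euler obstructions $\Eu_{X_\alpha}$.

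First, I would rewrite $\gamma = \sum_\alpha m_\alpha \Eu_{X_\alpha}$ using the relation $\Eu^\vee_Z = (-1)^{\dim Z}\Eu_Z$. Setting $d_\alpha = \dim X_\alpha$, we get $\gamma = \sum_\alpha (-1)^{d_\alpha} m_\alpha\, \Eu^\vee_{X_\alpha}$, so that by Definition \ref{defn; JCC},
\[
\textup{CC}(\gamma) = \sum_\alpha (-1)^{d_\alpha} m_\alpha\, T^*_{X_\alpha}W.
\]
Strata with $0 \notin X_\alpha$ contribute no component of $|\textup{CC}(\gamma)|$ over $0$, so only the finitely many $X_\alpha$ containing $0$ enter the local intersection at $(0,0)$.

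Second, for each such stratum the proposition immediately preceding Theorem \ref{theo:microlocal for Euobs} guarantees that $\tilde{\sigma}(W)$ meets $T^*_{X_\alpha}W$ only at $(0,0)$; this requires $\tilde{\nu}|_{X_\alpha}$ to have an isolated singularity at $0$, which is already implicit in the well-definedness of each $\Eu^{\tilde{\nu}}_{X_\alpha}(0)$ on the left-hand side. Bilinearity of the microlocal intersection pairing at $(0,0)$, combined with Theorem \ref{theo:microlocal for Euobs} applied to each summand, then yields
\[
\sharp_0\!\left([\textup{CC}(\gamma)]\cdot[\tilde{\sigma}(W)]\right) = \sum_\alpha (-1)^{d_\alpha} m_\alpha \cdot (-1)^{d_\alpha}\Eu^{\tilde{\nu}}_{X_\alpha}(0) = \sum_\alpha m_\alpha\, \Eu^{\tilde{\nu}}_{X_\alpha}(0) = \Eu(\tilde{\nu},\gamma,0),
\]
the two factors of $(-1)^{d_\alpha}$ cancelling neatly. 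The final holomorphic assertion is obtained by applying Proposition \ref{prop; EulerObs} termwise to each $\Eu^{\tilde{\nu}}_{X_\alpha}(0)$, giving a Segre-theoretic algebraic computation on the Nash blowup of each $X_\alpha$.

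The only substantive point beyond sign bookkeeping is the isolation of $(0,0)$ in $\tilde{\sigma}(W)\cap|\textup{CC}(\gamma)|$; thus the main (mild) obstacle is simply ensuring the implicit hypothesis that $\tilde{\nu}$ restricted to each stratum $X_\alpha \ni 0$ has an isolated singularity at $0$, which is exactly what makes the left-hand side meaningful in the first place.
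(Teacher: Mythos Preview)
Your argument is correct and is precisely the linearity-plus-sign-bookkeeping that the paper has in mind; the corollary is stated without proof there because it is exactly the termwise application of Theorem~\ref{theo:microlocal for Euobs} to the decomposition $\gamma=\sum_\alpha m_\alpha \Eu_{X_\alpha}$, with the $(-1)^{d_\alpha}$ from $\Eu^\vee$ cancelling the $(-1)^{d_\alpha}$ in the theorem. Your remark that the well-definedness of each $\Eu^{\tilde{\nu}}_{X_\alpha}(0)$ already forces the needed isolation of $(0,0)$ in $\tilde{\sigma}(W)\cap T^*_{X_\alpha}W$ is the right way to handle that point.
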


\begin{exam}\label{exam; microPH}
Suppose $\Sing(\tilde{\nu})=\{0\}$, then
\[
\Ind_{\PH}(\tilde{\nu}, 0)=\Eu(\tilde{\nu}, \ind_W,0) =\sharp_0 \left(\textup{CC}(\ind_W)\cdot [\tilde{\sigma}(W)]\right)= (-1)^n \cdot \sharp_0 \left( [W] \cdot [\tilde{\sigma}(W)] \right).
\]
\end{exam}
 
\begin{rema}
Let $\gamma$ be a constructible function on $X$. For any differentiable one-form $\omega$ that is dimensional transverse to $\textup{CC}(\gamma)$, in  \cite[Example 3.39]{MS22} the authors defined the local Poincar\'e-Hopf index of $\omega$ with respect to $\gamma$ using microlocal intersection and obtained a Poincar\'e-Hopf type formula for the Euler integral $\int_X \gamma d\chi$ using these indices. 
\end{rema}

Next, we consider the case that $X=D$ is a hypersurface in $W$ defined by a holomorphic function $f$. Let $\tilde{\nu}\in H^0(W,\Der_W(-\log D))$ be a logarithmic vector field such that $\Sing(\tilde{\nu})=\{0\}$.
Recall that $\Psi_f(\ind_W)$ (resp, $\Phi_f(\ind_W)$) is the constructible function whose value at $x\in D$ is   given by the
 Euler characteristic (resp, reduced Euler characteristic) of a local Milnor fiber of 
 $f$ at $x$, with relation $\Psi_f(\ind_W)=\Phi_f(\ind_W)+\ind_D$.

\begin{prop}\label{prop: nu is tangent to D}
The vector field $\tilde{\nu}$ is tangent to $\Psi_f(\ind_W)$ and $\ind_D$.
\end{prop}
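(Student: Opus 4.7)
The strategy is to exhibit a complex analytic Whitney stratification $\sS$ of $W$ with the following properties: (i) $W\setminus D$ is a stratum and $D$ is a union of strata; (ii) $\sS$ satisfies Thom's $a_f$ condition; (iii) every stratum closure $\overline{\sS_\alpha}$ is preserved by the local holomorphic flow of $\tilde{\nu}$. Granting such $\sS$, both $\ind_D$ and $\Psi_f(\ind_W)$ are $\sS$-constructible---the first tautologically, and the second because Thom's $a_f$ condition forces the Milnor fibration of $f$ to be topologically locally trivial along every stratum of $D$, making the Euler characteristic of the local Milnor fiber constant on each stratum. Expanding in the local Euler obstruction basis yields locally finite sums $\sum_\alpha m_\alpha \Eu_{\overline{\sS_\alpha}}$, and property (iii) guarantees that $\tilde{\nu}$ is tangent to the smooth locus of every $\overline{\sS_\alpha}$ (which contains the Zariski-dense open subset $\sS_\alpha$). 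This is exactly what the tangency definition just above the proposition demands.

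The key observation enabling the construction is that the holomorphic flow $\phi_t$ of $\tilde{\nu}$ scales $f$ by a unit: $f\circ \phi_t = u_t\cdot f$ for some $u_t\in \sO_W^*$. Indeed, the defining relation $\tilde{\nu}(f)=hf$ with $h\in \sO_W$ yields, along flow lines, a first-order linear ODE for $f\circ \phi_t$ whose solution is $f\cdot \exp\bigl(\int_0^t h\circ \phi_s\, ds\bigr)$. In particular each $\phi_t$ is a germ of biholomorphism of the pair $(W,D)$ carrying the Milnor fibration of $f$ to itself up to unit rescaling. Hence $\phi_t$ preserves every iterated singular locus $\Sing^k(D)$ (defined recursively by $\Sing^{k+1}(D)=\Sing(\Sing^k(D))$) as well as every locus characterized by invariants of the Milnor fibration.

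To produce $\sS$, begin with the coarse stratification $\sS_0=\{W\setminus D\}\cup\{\Sing^k(D)\setminus \Sing^{k+1}(D)\}_{k\geq 0}$, which is finite (the iterated singular loci terminate at $\emptyset$) and whose strata are $\phi_t$-invariant by the observation above. Apply the standard Verdier / L\^e--Teissier refinement procedure to upgrade $\sS_0$ to a Whitney stratification satisfying Thom's $a_f$ condition. At each step one removes from an existing stratum the proper closed analytic subset where Whitney's condition (or Thom's $a_f$) fails relative to a previously refined stratum. Both regularity conditions are preserved by biholomorphisms that rescale $f$ by a unit, so these failure loci are themselves $\phi_t$-invariant; consequently the refined strata remain unions of $\phi_t$-orbits, and property (iii) is maintained throughout.

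The principal obstacle is precisely this last point: confirming that the Verdier / L\^e--Teissier refinement steps preserve $\phi_t$-invariance of stratum closures. Everything else---the unit-rescaling of $f$ along flow lines, preservation of iterated singular loci, constancy of the Milnor fiber Euler characteristic on Thom $a_f$ strata, and the identification of tangency via the Euler obstruction decomposition---is by now standard in the theory.
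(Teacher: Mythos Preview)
Your strategy is correct, and the obstacle you flag does resolve: both Whitney regularity and Thom's $a_f$ condition are invariant under biholomorphisms of the pair $(W,D)$ that rescale $f$ by a unit (for $a_f$, note that on $D$ one has $d(uf)=u\,df$, so the relevant kernels coincide), hence the closed analytic failure loci removed at each Verdier refinement step are $\phi_t$-invariant and the final stratification retains property~(iii).

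The paper, however, takes a much shorter pointwise route and avoids building any global stratification. At each $x\in D\setminus\{0\}$ one has $\tilde{\nu}(x)\neq 0$, so Saito's triviality lemma \cite{MR586450} produces local coordinates $z_1,\ldots,z_n$ in which $\tilde{\nu}=\partial_{z_n}$ and $D$ is cut out by some $g(z_1,\ldots,z_{n-1})$ independent of $z_n$; since nearby and vanishing cycle functors commute with smooth pullback, every subvariety appearing in the Euler-obstruction decompositions of $\Psi_g(\ind_W)$ and $\Phi_g(\ind_W)$ near $x$ has the form $Y'\times\CC$, to which $\partial_{z_n}$ is tangent. At $x=0$ the vector $\tilde{\nu}(0)=0$ is trivially tangent to everything, and $\ind_D=\Psi_f(\ind_W)-\Phi_f(\ind_W)$ finishes the case of $\ind_D$. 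Your argument has the merit of yielding, as a by-product, an explicit $\tilde{\nu}$-invariant Whitney--Thom stratification of $D$---potentially useful for verifying condition $(\ast\ast)$ elsewhere in the paper---whereas the paper's argument is more economical, needing only the local rectification of $\tilde{\nu}$ and smooth base change rather than an inductive refinement procedure.
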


\begin{proof}
Let $\Psi_f(\ind_W)=\sum m_\alpha \Eu_{X_\alpha}$ and $\Phi_f(\ind_W)=\sum m_\beta \Eu_{X_\beta}$. Let $x\in D$ be a point different from 0. Since $\tilde{\nu}(x)\neq 0$, by Saito's triviality lemma \cite{MR586450} we can find a holomorphic coordinate system $z_1,\ldots,z_n$ on a small neighbourhood $W_x$ of $x$ in $W$ with the properties
\begin{enumerate}
\item $D\cap W_x$ is defined by a holomorphic function $g(z_1,\ldots,z_{n-1})$ in $W_x$,
\item $\tilde{\nu}=\partial_{z_n}$.
\end{enumerate}  
In other words, we have a trivialization $(W,x)\cong (\CC^{n-1},0)\times (\CC,0)$ and $(D,x)\cong (D',0)\times (\CC,0)$ for some hypersurface germ $(D',0)$ in $(\CC^{n-1},0)$. By the fact that the nearby and vanishing cycle functors commute with smooth pullback, the Euler obstructions appearing in the decompositions of $\Psi_g(\ind_{W_x})$ and $\Phi_g(\ind_{W_x})$ all take the form $\Eu_{Y}$ with $(Y,x)\cong (Y',0)\times (\CC,0)$, so $\tilde{\nu}(x)$ is tangent to any of these varieties $Y$ whenever $Y$ is smooth at $x$. Since these varieties $Y$ are also intersections of $X_\alpha$ and $X_\beta$ with $W_x$, we conclude that $\tilde{\nu}(x)$ is tangent to $X_\alpha$ and $X_\beta$ whenever they are smooth at $x$. Finally, since $\tilde{\nu}(0)=0$, so it is a tangent vector at $0$ for any of those $X_\alpha$ and $X_\beta$ smooth at $0$. We have shown that $\tilde{\nu}$ is tangent to $\Psi_f(\ind_{W})$ and $\Phi_f(\ind_{W})$. Since $\ind_D=\Psi_f(\ind_{W})-\Phi_f(\ind_W)$, $\tilde{\nu}$ is also tangent to $\ind_D$.
\end{proof}

Proposition \ref{prop: nu is tangent to D} and Corollary \ref{coro:intersection formula for Euobs} imply that $\Eu(\tilde{\nu},\Psi_f(\ind_W),0)$ and $\Eu(\tilde{\nu},\ind_D,0)$ are well-defined numbers and can be computed by microlocal intersection formulas.

\section{Microlocal interpretations of indices of logarithmic vector fields}
\label{sec; microlocalintersectionformula}
Let $\tilde{\nu}$ be a continuous vector field on $W$ tangent to $X$, let $\nu=\tilde{\nu}\vert_D$ and asuume $\Sing(\nu)=\{ 0\}$. After recalling the definitions of   the Schwartz index, the GSV index of $\nu$ and the logarithmic index of $\tilde{\nu}$, we will show in some cases these indices can be computed by microlocal intersection formulas.

\subsection{The Schwartz indices for vector fields on isolated singularities}  
The Schwartz index was introduced by  Marie-H\'el\`ene  Schwartz in \cite{MR212842} in her  generalization of Poincar\'e-Hopf theorem to singular spaces (see also \cite{BSS87}\cite{MR1096495}).  Here we  recall its construction on isolated singularities. The general construction  is more complicated  and we refer  to \cite[\S 2.4]{BSS87} for more details.

Assume that $X$ has at most an isolated singularity at $0$. 
Let $B_\epsilon=B_\epsilon(0)$ be a small ball  centered at $0$ and let $V_\epsilon= X\cap B_\epsilon$. This is a cone over the link $K_\epsilon:=S_\epsilon\cap X$. 
We may take a continuous radial vector field $\nu_{rad}$ on $D_\epsilon$ such that $\nu_{rad}$ has  only isolated singularity  at $\{0\}$ and is transverse to all the sphere boundaries $S_\delta$ for $\delta< \epsilon$. This vector field $\nu_{rad}$ extends to a continuous vector field $\tilde{\nu}_{rad}$ on $W$.
 
By  \cite[Theorem 1.1.2]{BSS87},  for  some $\epsilon_1<\epsilon$   we may form a continuous vector field $\omega$ 
on the   cylinder $C$ in $D$ with boundary formed by links $K_\epsilon$
 and  $K_{\epsilon_{1}}$, 
such that 
\begin{enumerate}
\item $\omega$ restricts to $\nu_{rad}$  on the link $K_{\epsilon_{1}}$ .
\item $\omega$ restricts to $\nu$ on the link $K_\epsilon$.
\item $\omega$ has finitely many isolated singularities in the interior of the cylinder $C$.
\end{enumerate}
  
\begin{defi}
Assume that $X$ has at most an isolated singularity at $0$. 
The Schwartz index at $0$ of $\nu$ is defined as 
\[
\Sch(\nu, X, 0):=\Sch(\nu_{rad}, X, 0)+\Ind_{\PH}(\omega, X) \/,
\]
where $\Ind_{\PH}(\omega, X):=\sum_{x\in \Sing(\omega)} \Ind_{\PH}(\omega, x)$ is the total Poincar\'e-Hopf index and the Schwartz index of any radial vector field $\nu_{rad}$ is defined to be $1$, i.e., 
$\Sch(\nu_{rad}, X, 0):=1 \/.$
\end{defi}

\begin{exam} 
if $\nu$ is everywhere transverse to $K_\epsilon$, then we have $\Sch(\tilde{\nu}, X, 0)=1 \/.$  
\end{exam} 

The vector field $\omega$ extends to a continuous vector field on $X$ agreeing with $\nu$ outside $X_\epsilon$ and agreeing with $\nu_{rad}$ inside $X_{\epsilon_1}$ and has singularity locus  $\Sing(\omega)\cup \{0\}$  in $B_\epsilon$.  This vector field further extends to a continuous vector field $\tilde{\omega}$ on $W$.  It is a continuous perturbation of the initial vector field $\tilde{\nu}$. 
 
The standard Hermitian metric $\langle\cdot,\cdot\rangle$ transforms the vector fields $\tilde{\nu}$, $\tilde{\nu}_{rad}$ and $\tilde{\omega}$ into continuous sections $\tilde{\sigma}$, $\tilde{\sigma}_{rad}$ and $\tilde{\rho}$ of $T^*W$. The sections $\tilde{\sigma}(W)$ and $\tilde{\sigma}_{rad}(W)$ only intersect  $T_X^*W$ at $(0,0)$, while the intersection of $\tilde{\rho}(W)$ and $T^*_XW$ is discrete and confined inside $B_\epsilon$.
 
\begin{lemm}
$\sharp_0 \left([\textup{CC}(\ind_X)]\cdot [\tilde{\sigma}_{rad}(B_\epsilon)]\right) =1=\Sch(\nu_{rad}, X, 0)\/.$
\end{lemm}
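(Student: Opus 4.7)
The equality $\Sch(\nu_{rad}, X, 0) = 1$ holds by definition, so only the intersection-number identity requires argument. The strategy is to expand $\ind_X$ in the local Euler obstruction basis and apply Theorem~\ref{theo:microlocal for Euobs} termwise, exploiting the classical fact that a radial vector field realizes the local Euler obstruction.

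Since $X$ has an isolated singularity at $0$, its Whitney stratification consists only of $\{0\}$ and $X\setminus\{0\}$. Comparing values at a smooth point of $X$ and at $0$ yields
\[
\ind_X = \Eu_X + (1 - \Eu_X(0))\,\Eu_{\{0\}},
\]
and rewriting in the $\Eu^\vee$-basis via $\Eu^\vee_Z = (-1)^{\dim Z}\Eu_Z$ gives
\[
\textup{CC}(\ind_X) = (-1)^d\,T^*_X W + (1 - \Eu_X(0))\,T^*_{\{0\}} W.
\]
Both relevant intersections with $\tilde{\sigma}_{rad}(B_\epsilon)$ are concentrated at $(0,0)$: the first by the proposition preceding Theorem~\ref{theo:microlocal for Euobs}, which applies because $\nu_{rad}$ is tangent to $X_{sm}$ with $\Sing(\nu_{rad})=\{0\}$; the second because $T^*_{\{0\}} W$ is the cotangent fiber at $0$, and a graph meets a fiber exactly where the section vanishes.

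Applying Theorem~\ref{theo:microlocal for Euobs} to the first summand,
\[
(-1)^d\,\sharp_0([T^*_X W]\cdot [\tilde{\sigma}_{rad}(B_\epsilon)]) = \Eu_X^{\tilde{\nu}_{rad}}(0) = \Eu_X(0),
\]
where the last equality is MacPherson's original characterization: the obstruction index of a radial vector field on the Nash blowup is precisely the local Euler obstruction of $X$ at $0$. For the fiber term, a direct local-coordinate computation identifies the intersection number at $(0,0)$ with the local degree of $\tilde{\sigma}_{rad}$ at $0$, namely $\Ind_{\PH}(\tilde{\nu}_{rad}, 0) = 1$. Summing,
\[
\sharp_0([\textup{CC}(\ind_X)]\cdot [\tilde{\sigma}_{rad}(B_\epsilon)]) = \Eu_X(0) + (1 - \Eu_X(0))\cdot 1 = 1,
\]
which completes the proof. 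The only non-routine step is invoking the MacPherson-type identification $\Eu_X^{\tilde{\nu}_{rad}}(0) = \Eu_X(0)$; once that is cited, the rest of the argument is bookkeeping of signs and basis expansions.
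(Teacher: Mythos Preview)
Your approach is correct and genuinely different from the paper's. The paper argues by deformation: it replaces $\tilde{\sigma}_{rad}$ by the exact form $dr$, where $r(x)=\langle x,x\rangle$ is the squared distance to $0$, through a homotopy of sections all meeting $|\textup{CC}(\ind_X)|$ only at the origin, and then invokes the identity $\sharp_0\bigl([\textup{CC}(\ind_X)]\cdot[dr(B_\epsilon)]\bigr) = (\Eu^\vee\circ\textup{CC}(\ind_X))(0) = \ind_X(0) = 1$, which is the very definition of the microlocal index map in Sch\"urmann's framework. Your route instead decomposes $\ind_X$ explicitly in the Euler-obstruction basis (this is where you use the isolated-singularity hypothesis) and evaluates the two resulting intersection numbers via Theorem~\ref{theo:microlocal for Euobs} together with MacPherson's original obstruction-theoretic characterization $\Eu_X^{\tilde{\nu}_{rad}}(0)=\Eu_X(0)$. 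The paper's argument is shorter and leans on heavier external machinery; yours is more elementary and entirely self-contained relative to the results already established in \S\ref{sec; Eulerobstruction}.

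One inaccuracy worth fixing: the justification for the fiber term is not right. The number $\sharp_0\bigl([T^*_{\{0\}}W]\cdot[\tilde{\sigma}_{rad}(B_\epsilon)]\bigr)$ is \emph{not} the local degree of $\tilde{\sigma}_{rad}$ at $0$; the graph of \emph{any} continuous section meets a cotangent fiber transversally in a single point, so this intersection number is always $+1$, independent of the vector field. (Equivalently, this is Theorem~\ref{theo:microlocal for Euobs} for $X=\{0\}$, giving $\Eu_{\{0\}}^{\tilde{\nu}}(0)=1$ for every $\tilde{\nu}$.) Your conclusion is unaffected, since both the correct value and $\Ind_{\PH}(\tilde{\nu}_{rad},0)$ happen to equal $1$ here, but the reasoning should be replaced.
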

\begin{proof}
Let $r(x)=\langle x,x\rangle$ be the square of the distance to $0\in W$. We have
\[
\sharp_0 \left([\textup{CC}(\ind_X)]\cdot[dr(B_\epsilon)]\right)=(\Eu^\vee \circ\textup{CC}(\ind_X))(0)=\ind_X(0)=1
\]
by the definition of the dual Euler transformation \cite[Corollary 5.0.1]{MR2031639}. On the other hand, it is easy to find a deformation $\tilde{\sigma}_t$ of continuous sections such that $\tilde{\sigma}_0=\tilde{\sigma}_{rad}$ and $\tilde{\sigma}_1=dr$ and $|T^*_XW|\cap |\tilde{\sigma}_t(W)|=(0,0)$ for any $t\in [0,1]$. For example, by integrating the vector field $\tilde{\sigma}_{rad}$ we can obtain a $C^1$ function $g$ whose gradient vector field is $\tilde{\sigma}_{rad}$. The function $g$ has the property that $g\geq 0$ and $g^{-1}(0)=\{0 \}$. We can then define $g_t=(1-t)g+tr$ and let $\tilde{\sigma}_t=dg_t$. Finally, by conservation of intersection number under continous deformation we have
\[
\sharp_0 \left([\textup{CC}(\ind_X)]\cdot [\tilde{\sigma}_{rad}(B_\epsilon)]\right) =\sharp_0 \left([\textup{CC}(\ind_X)]\cdot[dr(B_\epsilon)]\right)=1.
\]
\end{proof}

We have the following microlocal interpretation of the Schwartz index. 
\begin{prop}
\label{prop; microSchFormula}
Suppose $X$ has at most an isolated singularity at $0$, then
\[
\Sch(\nu, X, 0)=\sharp_0 \left( [\textup{CC}(\ind_X)]\cdot [\tilde{\sigma}(W)] \right) \/.
\]
\end{prop}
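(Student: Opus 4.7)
My plan is to replace $\tilde{\nu}$ by the perturbed vector field $\tilde{\omega}$ from the definition of the Schwartz index, and then read off the intersection number as a sum of local contributions at the zeros of $\tilde{\omega}$.

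First, I would identify the support of the characteristic cycle. Because $X\setminus\{0\}$ is smooth, one has $\ind_X = \Eu_X + (1-\Eu_X(0))\ind_{\{0\}}$, so
\[
\textup{CC}(\ind_X) = (-1)^d [T^*_XW] + (1-\Eu_X(0))[T^*_0W],
\]
and in particular $|\textup{CC}(\ind_X)| \subset T^*_XW \cup T^*_0W$. Arguing as in the proposition preceding Theorem~\ref{theo:microlocal for Euobs} gives $\tilde{\sigma}(W)\cap T^*_XW = \{(0,0)\}$, and $\tilde{\sigma}(W)\cap T^*_0W = \{(0,0)\}$ since $T^*_0W$ sits over the single point $0$. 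Thus $\tilde{\sigma}(W)$ meets $|\textup{CC}(\ind_X)|$ only at $(0,0)$; in particular this intersection is empty over the sphere $S_\epsilon$.

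Next, I would consider the straight-line homotopy $\tilde{\rho}_t = (1-t)\tilde{\sigma} + t\tilde{\rho}$, where $\tilde{\rho}$ is the section of $T^*W$ associated with $\tilde{\omega}$. By construction $\tilde{\omega}\equiv \tilde{\nu}$ outside $B_\epsilon$, so $\tilde{\rho}_t|_{S_\epsilon} = \tilde{\sigma}|_{S_\epsilon}$ for every $t$; combined with the previous paragraph this prevents any intersection from escaping through the boundary, and the conicity of $\textup{CC}(\ind_X)$ keeps the intersection inside a compact part of $T^*W|_{\overline{B_\epsilon}}$. The law of conservation of intersection numbers therefore yields
\[
\sharp_0\bigl([\textup{CC}(\ind_X)]\cdot [\tilde{\sigma}(W)]\bigr) = \sum_p \sharp_p\bigl([\textup{CC}(\ind_X)]\cdot [\tilde{\rho}(W)]\bigr),
\]
summed over the finitely many intersection points of $\tilde{\rho}(W)$ with $|\textup{CC}(\ind_X)|$ inside $B_\epsilon$, namely $(0,0)$ and the points $(x_i,0)$ with $x_i \in \Sing(\omega)$ inside the cylinder $C$.

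Finally, I would compute each local contribution. Since $\tilde{\omega}\equiv \tilde{\nu}_{rad}$ near $0$, the contribution at $(0,0)$ equals $\sharp_0\bigl([\textup{CC}(\ind_X)]\cdot[\tilde{\sigma}_{rad}(W)]\bigr)=1$ by the preceding lemma. At each $(x_i,0)$ the variety $X$ is smooth of dimension $d$, so locally $\textup{CC}(\ind_X)=(-1)^d[T^*_XW]$; applying Theorem~\ref{theo:microlocal for Euobs} to the smooth variety $X$ at $x_i$ (where the Nash bundle is just $TX$ and the local Euler obstruction reduces to the Poincar\'e--Hopf index) gives $\sharp_{x_i}\bigl([T^*_XW]\cdot[\tilde{\rho}(W)]\bigr)=(-1)^d\Ind_{\PH}(\omega,x_i)$, so the local contribution is $\Ind_{\PH}(\omega,x_i)$. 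Summing produces $1+\sum_i\Ind_{\PH}(\omega,x_i)=\Sch(\nu,X,0)$. The main obstacle I anticipate is justifying conservation of intersection numbers through the homotopy -- one must verify that the intersections remain proper uniformly in $t$, which is exactly what the boundary analysis in the first step is designed to secure.
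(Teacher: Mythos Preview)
Your proposal is correct and follows essentially the same approach as the paper's proof: perturb $\tilde{\sigma}$ to $\tilde{\rho}$ corresponding to $\tilde{\omega}$, invoke conservation of intersection numbers, then identify the contribution at $0$ via the radial-field lemma and the contributions at the smooth points $x_i\in\Sing(\omega)$ as Poincar\'e--Hopf indices through the Nash--conormal correspondence of Theorem~\ref{theo:microlocal for Euobs}. Your version is somewhat more explicit about the decomposition of $\textup{CC}(\ind_X)$ and the boundary control during the homotopy, but the strategy and key steps coincide with the paper's.
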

\begin{proof} 
Since $\tilde{\omega}$ is a perturbation of $\tilde{\nu}$, $\tilde{\rho}$ is a perturbation of $\tilde{\sigma}$. By conservation of intersection number we have 
\[
\sharp_0 \left([\textup{CC}(\ind_X)]\cdot [\tilde{\sigma}(W)]\right)
=
\sum\sharp_x \left([\textup{CC}(\ind_X)]\cdot [\tilde{\rho}(W)]\right)   
\]
where the summation runs through all $x\in \Sing(\tilde{\omega})\cap D=\{0\}\cup \Sing(\omega)$. Denote the ball of radius $\eta$ centered at $x$ by $B_{\eta,x}$. When $0<\eta \ll \epsilon$, we have
\begin{align*}
\sharp_0 \left([\textup{CC}(\ind_X)]\cdot [\tilde{\rho}(W)]\right)  
=& \sharp_0 \left([\textup{CC}(\ind_X)]\cdot [\tilde{\sigma}_{rad}(B_{\epsilon})]\right)  + 
(-1)^{n-d}\sum_{x\in \Sing(\omega)} \sharp_x \left([T^*_XW]\cdot [\tilde{\rho}(B_{\eta,x})] \right) \\ 
=&  \Sch(\nu_{rad}, X, 0)+  (-1)^{n} \sum_{x\in \Sing(\omega)} \sharp_x \left( [X]\cdot  [\omega(B_{\eta,x}\cap X)] \right) \\
=&  \Sch(\nu_{rad}, X, 0)+ \Ind_{\PH}(\omega, X).
\end{align*}
The second equality is due to the Nash-conormal correspondence as in Theorem~\ref{theo:microlocal for Euobs}, and  that the  Nash blowup is isomorphic to $X$ around any $x\in \Sing(\omega)$ since $\Sing(\omega)$ lies in $X_{sm}=X\setminus\{0\}$. 
The third equality is due to Example~\ref{exam; microPH}. The proposition then follows from the definition of the Schwartz index. 
\end{proof}

The Schwartz index $\Sch(\nu,V,0)$ is defined quite generally for any complex analytic germ $(V,0)\subset (\CC^n,0)$ and any stratified vector field $\nu$ on a local representation $V$ of $(V,0)$ with an isolated singularity at $0$ (See \cite[Definition 2.4.2]{BSS87}). The assumption that the vector field is stratified implies that $\tilde{\nu}$ is tangent to $\ind_V$, so $\Eu(\tilde{\nu},\ind_V,0)$ is well-defined and is equal to $\sharp_0 \left(\textup{CC}(\ind_V)\cdot [\tilde{\sigma}(W)]\right)$ by Corollary \ref{coro:intersection formula for Euobs}. It is presumably true that $\Eu(\tilde{\nu},\ind_V,0)=\Sch(\nu,V,0)$ but we didn't try to prove it seriously. So we put it here as a conjecture.

\begin{conj}
Let $\tilde{\nu}$ be a continuous vector field on $W$ with an isolated singularity at $0$ inducing a stratified vector field on $V$. Then
\[
\Sch(\nu,V,0)=\Eu(\tilde{\nu},\ind_V,0).
\]
\end{conj}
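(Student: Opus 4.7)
By Corollary~\ref{coro:intersection formula for Euobs}, $\Eu(\tilde{\nu},\ind_V,0)=\sharp_0\bigl([\textup{CC}(\ind_V)]\cdot[\tilde{\sigma}(W)]\bigr)$, so the task is to identify this microlocal intersection with the Schwartz index. The plan is to mimic the proof of Proposition~\ref{prop; microSchFormula}, replacing the isolated-singularity construction by the general Schwartz construction of \cite[\S 2.4]{BSS87}: one builds a stratified vector field $\omega$ on $V\cap B_\epsilon$ agreeing with $\nu$ on the link $K_\epsilon$, radial near $0$, and with finitely many isolated zeros $x_i$, each lying on a stratum $S_{\alpha_i}$ of $V$ for which $\omega\vert_{S_{\alpha_i}}$ has an isolated zero at $x_i$, so that $\Sch(\nu,V,0)=1+\sum_i \Ind_{\PH}(\omega\vert_{S_{\alpha_i}},x_i)$. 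Extend $\omega$ to a continuous vector field $\tilde{\omega}$ on $W$ perturbing $\tilde{\nu}$, and let $\tilde{\rho}$ be its image in $T^*W$ under the metric. At any point of a stratum $S$ different from $0$ and the $x_i$, $\tilde{\omega}$ is tangent to $S$ and nonzero, so $\tilde{\rho}$ does not annihilate $TS$; hence $\tilde{\rho}(W)$ meets $|\textup{CC}(\ind_V)|$ inside $B_\epsilon$ only at $0$ and the $x_i$, and conservation of intersection numbers gives
\[
\sharp_0\bigl([\textup{CC}(\ind_V)]\cdot[\tilde{\sigma}(W)]\bigr)=\sharp_0\bigl([\textup{CC}(\ind_V)]\cdot[\tilde{\rho}(W)]\bigr)+\sum_i \sharp_{x_i}\bigl([\textup{CC}(\ind_V)]\cdot[\tilde{\rho}(W)]\bigr).
\]

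The next step is to evaluate each local term. At $0$, the argument of the lemma preceding Proposition~\ref{prop; microSchFormula} goes through unchanged: one deforms $\tilde{\sigma}_{rad}$ to $dr$ through gradient sections of $(1-t)g+tr$ while keeping $(0,0)$ as the unique intersection point, and invokes the dual Euler transformation identity $(\Eu^\vee\circ\textup{CC}(\ind_V))(0)=\ind_V(0)=1$; so the contribution at $0$ equals $1$. At each $x_i\in S_{\alpha_i}$, invoke Whitney's local product structure to obtain a $C^\infty$ trivialization $(W,x_i)\cong (S_{\alpha_i},x_i)\times (N,0)$ under which $V\cong S_{\alpha_i}\times(V\cap N)$ and every stratum of $V$ through $x_i$ is of product form. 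Then $\ind_V$ is the external pullback of $\ind_{V\cap N}$, and the external product formula for characteristic cycles gives, locally near $x_i$,
\[
[\textup{CC}(\ind_V)]=(-1)^{\dim S_{\alpha_i}}\bigl[\text{zero section of } T^*S_{\alpha_i}\bigr]\boxtimes[\textup{CC}(\ind_{V\cap N})].
\]
After choosing a product Hermitian metric near $x_i$ and arranging that $\omega=\omega_{\textup{tan}}\oplus\omega_{\textup{nor}}$ with $\omega_{\textup{tan}}$ an isolated-zero vector field on $S_{\alpha_i}$ and $\omega_{\textup{nor}}$ radial on $V\cap N$, the section $\tilde{\rho}$ splits accordingly and the local intersection factors as $\Ind_{\PH}(\omega\vert_{S_{\alpha_i}},x_i)\cdot 1$ --- the tangential factor being evaluated by Example~\ref{exam; microPH} applied to the smooth manifold $S_{\alpha_i}$, and the normal factor by the radial computation on the slice $V\cap N$. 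Summing yields $1+\sum_i\Ind_{\PH}(\omega\vert_{S_{\alpha_i}},x_i)=\Sch(\nu,V,0)$.

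The main obstacle is the simultaneous compatibility of Whitney's product trivialization with the characteristic cycle, the vector field $\tilde{\omega}$, and the Hermitian metric near each $x_i$. One has to argue that $\tilde{\omega}$ and the metric may be perturbed --- without altering any of the relevant intersection numbers --- so that they respect the local product structure, which is essentially a homotopy argument inside the space of stratified vector fields underlying the Schwartz construction. A secondary technicality is the sign bookkeeping for the external product of characteristic cycles in the real-oriented conventions of \cite[Chapter~5]{MR2031639}, but the factor $(-1)^{\dim S_{\alpha_i}}$ above should cancel against the corresponding sign in Example~\ref{exam; microPH}, so no unexpected sign survives.
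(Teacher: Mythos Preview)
The statement is presented in the paper as an open \emph{conjecture}; the authors explicitly say they ``didn't try to prove it seriously,'' so there is no proof in the paper to compare against and your proposal is an attempt at an open question.

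Your overall strategy is the natural one, and the contribution at $0$ goes through just as in Proposition~\ref{prop; microSchFormula}. The obstacle you flag at the points $x_i$ is, however, a genuine gap rather than a routine technicality. Thom's first isotopy lemma (the source of ``Whitney's local product structure'') furnishes only a $C^0$ trivialization along the stratum $S_{\alpha_i}$; in general there is no $C^\infty$, let alone analytic, local product structure compatible with a Whitney stratification. Since $\textup{CC}(\ind_V)$ is built from complex-analytic conormal varieties, the external-product identity you write has no meaning in merely continuous coordinates, and the factorization $\Ind_{\PH}(\omega\vert_{S_{\alpha_i}},x_i)\cdot 1$ is unjustified as stated. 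Your proposed fix --- homotope $\tilde{\omega}$ and the metric to respect the product --- does not address this, because the difficulty lies with the conormal cycle, not with the vector field or the metric. A plausible repair would bypass any trivialization: write $\textup{CC}(\ind_V)=\sum_\beta m_\beta[T^*_{\overline{S_\beta}}W]$ near $x_i$, compute each $\sharp_{x_i}\bigl([T^*_{\overline{S_\beta}}W]\cdot[\tilde{\rho}(W)]\bigr)$ directly from the radial-extension form of $\tilde{\omega}$ in analytic coordinates flattening $S_{\alpha_i}$, and match the weighted sum against the obstruction-theoretic Schwartz index of \cite[\S 2.4]{BSS87}. Carrying this out amounts to transporting the stratified-Morse-theory computation of \cite[Chapter~5]{MR2031639} from gradients of real functions to continuous radially-extended stratified vector fields, and that transport is precisely the work the authors declined to undertake.
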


\subsection{The GSV indices for vector fields on hypersurface singularities}  
\label{sec; microlocalGSV}
The GSV index for vector fields on isolated  hypersurface  singularities was introduced in \cite{GSV}  by X. Gomez-Mont, J. Seade and A. Verjovsky. The original definition concerns the obstruction to extending  $\nu$ and $\overline{\textup{grad}}f$ to a local $2$-frame and we refer to \cite{GSV} for the precise definition. Later, the construction was extended to stratified vector fields on any complex hypersurface. We briefly recall the construction below and the readers may refer to \cite[\S 3.5.2]{BSS87} for more details.
 
Let $X=D$ be a reduced complex hypersurface defined by a holomorphic function $f$ on $W$. Take a Whitney stratification $D_\alpha$ of $D$ satisfying the Thom $w_f$ condition. Let $\tilde{\nu}$ be a vector field on $W$ inducing a stratified vector field $\nu=\tilde{\nu}\vert_D$ on $D$ with $\Sing(\nu)=\{0\}$. Let $B_\epsilon=B_\epsilon(0)$ be a small ball  centered at $0$ with boundary $S_\epsilon$ and let $F_t=B_\epsilon\cap f^{-1}(t)$
be the local Milnor fiber of $f$ at $0$. This is a $2n$-dimensional manifold with boundary $\partial F_t = F_t\cap S_\epsilon$. The contraction map from the Milnor tube to the singular fibre $F_0$ induces a continuous map $\pi:F_t\to F_0$ which is surjective and is $C^\infty$ over the smooth part of $F_0$. The Thom $a_f$ condition implies the existence of a local Milnor fibration (cf. \cite[p. 755 and Remark 10.4.13]{MS22}),
and one can define a vector field $\nu_t$ on the Milnor fiber $F_t$ such that $\pi_*(\nu_t)=\nu$, and the stronger $w_f$ condition implies that $\nu_t$ is continuous. Since $\Sing(\nu)=0$, $\nu_t$ is nonsingular in a collar neighbourhood of $\partial F_t$ when $t$ is sufficiently close to 0.

\begin{defi}[{\cite[Definition 3.5.1]{BSS87}}]
The $GSV$ index of $\nu$ on $D$ at $0$, denoted by $\GSV(\nu, D, 0)$, is the total Poinacr\'e-Hopf index of $\nu$ on $F_t$:
\[
\GSV(\nu, D, 0)=\Ind_{\PH}(\nu_t, F_t)=\int [F_t]\cdot [\nu_t(F_t)] \/.
\]  
The last integration makes sense since the intersection of $F_t$ and $\nu_t(F_t)$ (insider $TF_t$) does not meet $\partial F_t$ so is supported in a compact subset of $F_t$. 
\end{defi}

So far we see two competing conditions defining what it means for a vector field $\tilde{\nu}$ to be tangent to a complex hypersurface $D$. 
\begin{itemize}
\item[($\ast$)] The vector field $\tilde{\nu}$ is holomorphic and tangent to $D_{sm}$, i.e. $\tilde{\nu}\in H^0(W,\Der_W(-\log D))$.
\item[($\ast\ast$)] The vector field $\nu=\tilde{\nu}\vert_D$ is continuous, and there exists a stratification $\{D_\alpha\}$ of $D$ which is both Whitney regular and Thom $w_f$-regular such that $\nu$ is tangent to $D_\alpha$ for every $\alpha$.
\end{itemize}

We need condition ($\ast$) to define $\Eu(\tilde{\nu},\Psi_f(\ind_W),0)$ and condition ($\ast\ast$) to define $\GSV(\nu, D, 0)$. Clearly a holomorphic $\tilde{\nu}$ satisfying ($\ast\ast$) automatically satisfies ($\ast$). Conversely, given a vector field $\tilde{\nu}$ satisfying ($\ast$), we know in at least two cases the logarithmic stratification of $D$ satisfies all requirements in $(\ast\ast)$.

\begin{prop}\label{prop; stratification}
The logarithmic stratification of any holonomic divisor is a Whitney stratification. If $D$ is strongly Euler homogeneous, then the pair $(W\setminus D, D_\beta)$ satisfies the $w_f$ condition for any logarithmic stratum $D_\beta$. If $D$ has an isolated singularity at $0$, the pair $(W\setminus D, D\setminus 0)$ satisfies $w_f$ condition in a neighbourhood of $0$.

\end{prop}

\begin{proof}
For the statement that the logarithmic stratification of a holonomic divisor satisfies the Whitney conditions, one can find an argument in \cite{MR3260143} involving the canonical Whiteny stratication of complex analytic varieties. However, one can also explain why Whitney conditions are satisfied by the following simple observation. Let $D_\alpha, D_\beta$ be two adjacent logarithmic strata with $D_\beta\subset \overline{D_\alpha}$. At any $x\in D_\beta$, we can find $d_\beta$ vector fields $\tilde{\nu}_1,\ldots,\tilde{\nu}_{d_\beta} \in \textup{Der}_{W,x}(-\log D)$ such that $\tilde{\nu}_1(x),\ldots,\tilde{\nu}_{d_\beta}(x)$ are linearly independent and they span $T_xD$. For any $y\in D_\alpha$, denote by $T_y$ the vector space spanned by $\tilde{\nu}_1(y),\ldots,\tilde{\nu}_{d_\beta}(y)$. It is clear that $\dim T_y=d_\beta$ when $y\in D_\alpha$ is sufficiently close to $x$. Moreover, the distance between $T_xD_\beta$ and $T_y$ (given by the sine value of the angle between the two vector spaces, see for example \cite{MR4261554} p.250) is a continuous function of both $x$ and $y$, since $T_xD_\beta$ and $T_y$ are generated by a same set of vector fields at different locations. Therefore, there exists an open neibourhood $U_x$ of $x$ in $D$ and a constant $C$ such that
\[
d(T_xD_\beta,T_y) \leq C|x-y|
\] 
whenever $y\in D_\alpha\cap U_x$. By the definition of logarithmic stratification, we also have $T_y \subset T_yD_\alpha$. So
\[
d(T_xD_\beta,T_yD_\alpha) \leq d(T_xD_\beta,T_y) \leq C|x-y|,
\] 
which means that the pair $(D_\alpha,D_\beta)$ satisfies the $w$ condition. One famous theorem of B.Tessier asserts that for complex analytic stratifications the Whitney $w$ condition and $b$ conditions are equivalent (see for example \cite{MR4261554} p.251). The holonomicity condition is only used in the above proof to assure the logarithmic strata are locally finite.

The proof of the second statement is similar. Let $x\in D$ be any point and let $D_\beta$ be the stratum containing $x$. Since $D$ is strongly Euler homogneneous, there exists a local equation $g$ for $D$ at $x$ and a strongly Euler homogeneous vector field $\chi\in  \mathfrak{m}_{W,x}\textup{Der}_{W,x}(-\log D)$ such that $\chi(g)=g$. We have a direct sum decomposition
\[
\textup{Der}_{W,x}(-\log D) \equiv \sO_{W,x}\cdot \chi \oplus \textup{Der}_{W,x}(-\log g)
\]
where $\textup{Der}_{W,x}(-\log g)$ consists of derivations $\delta$ such that $\chi(g)=0$. The $d_\beta$ vector fields $\tilde{\nu}_1,\ldots,\tilde{\nu}_{d_\beta}$ which span $T_xD_\beta$ at $x$ can be chosen from $\textup{Der}_{W,x}(-\log g)$. Let $y\in W\setminus D$. The tangent space to $g^{-1}(g(y))$ at $y$ is spanned by the set $\{ \delta(y) \ \vert \ \delta \in \textup{Der}_{W,x}(-\log g)\}$ which contains the linearly independent vectors $\tilde{\nu}_1(y),\ldots,\tilde{\nu}_{d_\beta}(y)$ provided $y$ is sufficiently close to $x$. An argument as in the proof of the first statement shows that the pair $(W\setminus D, D_\beta)$ satisfies the $w_g$ condition. Finally, since there exists some $h\in \sO_{W,x}^{*}$ such that $f=hg$, the relation $df=hdg+gdh$ shows that the $w_g$ condition and $w_f$ condition at $x$ are equivalent.

The last statement about the isolated singularity case is trivial.
\end{proof}

\begin{prop}
\label{prop; microGSVFormula}
If $\tilde{\nu}$ is holomorphic and satisfies condition $(\ast\ast)$, then 
\[
\GSV(\nu, D, 0)=\sharp_0 \left( [\textup{CC}(\Psi_f(\ind_{W}))]\cdot [\tilde{\sigma}(W)] \right)=\Eu(\tilde{\nu},\Psi_f(\ind_{W}),0).
\]
\end{prop}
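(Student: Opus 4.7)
The second equality $\sharp_0\left([\textup{CC}(\Psi_f(\ind_W))]\cdot[\tilde{\sigma}(W)]\right)=\Eu(\tilde{\nu},\Psi_f(\ind_W),0)$ is immediate from Proposition~\ref{prop: nu is tangent to D} together with Corollary~\ref{coro:intersection formula for Euobs}, so the real content lies in the first equality. My plan is to deform $\textup{CC}(\Psi_f(\ind_W))$ into the family of conormals to the smooth Milnor fibres and then identify the resulting intersection multiplicities with Poincar\'e--Hopf indices on a Milnor fibre. For $t\neq 0$ small, $f^{-1}(t)$ is smooth of complex dimension $n-1$, so $\textup{CC}(\ind_{f^{-1}(t)})=(-1)^{n-1}[T^*_{f^{-1}(t)}W]$. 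The classical specialization formula for characteristic cycles of nearby cycles (see e.g.\ \cite[\S10.3]{MS22}) writes $(-1)^{n-1}[\textup{CC}(\Psi_f(\ind_W))]$ as the flat limit of $[T^*_{f^{-1}(t)}W]$ as $t\to 0$ inside $T^*W$. Hypothesis $(\ast\ast)$ together with $\Sing(\nu)=\{0\}$ and Thom $a_f$-regularity confines every intersection of $\tilde{\sigma}(W)$ with the approximating Lagrangians to the Milnor ball around $0$, so conservation of intersection numbers yields
\[
\sharp_0\left([\textup{CC}(\Psi_f(\ind_W))]\cdot[\tilde{\sigma}(W)]\right)=(-1)^{n-1}\sum_{x\in F_t}\sharp_x\left([T^*_{F_t}W]\cdot[\tilde{\sigma}(W)]\right)
\]
for all sufficiently small $t\neq 0$.

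A point $(x,\tilde{\sigma}(x))$ lies in $T^*_{F_t}W\cap\tilde{\sigma}(W)$ iff $x\in F_t$ and $\tilde{\nu}(x)$ is Hermitian-orthogonal to $T_xF_t$, equivalently iff the tangential projection $\nu_t^\perp$ of $\tilde{\nu}$ onto $TF_t$ vanishes at $x$. Writing $\sigma_t^\perp$ for the section of $T^*F_t$ corresponding to $\nu_t^\perp$, the symplectic reduction $T^*W|_{F_t}\to T^*F_t$ (a line bundle with fibre the conormal line) carries $T^*_{F_t}W$ onto the zero section and $\tilde{\sigma}|_{F_t}$ onto $\sigma_t^\perp$, and preserves local intersection germs. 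Applying Example~\ref{exam; microPH} to the smooth manifold $F_t$ then gives
\[
\sharp_x\left([T^*_{F_t}W]\cdot[\tilde{\sigma}(W)]\right)=\sharp_x\left([F_t]\cdot[\sigma_t^\perp(F_t)]\right)=(-1)^{n-1}\Ind_{\PH}(\nu_t^\perp,x),
\]
so the two signs $(-1)^{n-1}$ cancel and $\sharp_0\left([\textup{CC}(\Psi_f(\ind_W))]\cdot[\tilde{\sigma}(W)]\right)=\Ind_{\PH}(\nu_t^\perp,F_t)$.

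It remains to identify $\Ind_{\PH}(\nu_t^\perp,F_t)$ with $\GSV(\nu,D,0)=\Ind_{\PH}(\nu_t,F_t)$. Both $\nu_t^\perp$ and $\nu_t$ are continuous vector fields on $F_t$ with isolated interior zeros and nonvanishing boundary restrictions---$\nu_t|_{\partial F_t}$ arising from the Thom $w_f$-trivialization and $\nu_t^\perp|_{\partial F_t}$ from the Hermitian projection---and for $t$ sufficiently small both boundary fields are isotopic to $\nu|_{K_\epsilon}$ via the ambient isotopy supplied by the Milnor fibration. A straight-line homotopy along the collar realizes $\nu_t^\perp\simeq\nu_t$ through boundary-nonsingular vector fields, hence their total Poincar\'e--Hopf indices agree. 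The chief obstacle is the specialization formula used in the first paragraph: although it is folklore, the signs and the precise notion of flatness of the Lagrangian family demand care; a secondary delicate point is the boundary homotopy, for which condition $(\ast\ast)$---in particular Thom $w_f$-regularity---is indispensable to guarantee that both $\nu_t$ and $\nu_t^\perp$ extend continuously up to $\partial F_t$.
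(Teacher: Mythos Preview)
Your proposal is correct and follows essentially the same route as the paper's proof: both arguments hinge on the specialization formula identifying $\textup{CC}(\Psi_f(\ind_W))$ as the limit of $(-1)^{n-1}[T^*_{F_t}W]$, on the Hermitian orthogonal projection of $\tilde{\nu}$ to $TF_t$ (your $\nu_t^\perp$ is exactly the paper's $\nu'_t$), and on the observation that this projection is a boundary-nonsingular perturbation of the $\nu_t$ used to define the GSV index. The only cosmetic differences are that the paper runs the argument in the opposite direction (starting from $\GSV$ and deforming toward the microlocal intersection) and handles the passage from $T^*W$ to $T^*F_t$ directly via Theorem~\ref{theo:microlocal for Euobs} applied to the smooth hypersurface $F_t$, rather than phrasing it as a symplectic reduction.
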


\begin{proof}
Using the orthogonal projection $p:TW\vert_{F_t} \to TF_t$ defined by the standard Hermitian metric, we obtain another vector field $\nu'_t=p(\tilde{\nu}\vert_{F_t})$ on $F_t$. It is clear that $\nu'_t$ is a small perturbation of $\nu_t$ in a small collar neighbourhood of $\partial F_t$. Therefore we have
\[
\GSV(\nu, D, 0)=\int [F_t]\cdot [\nu_t(F_t)]=\int [F_t]\cdot [\nu'_t(F_t)]= (-1)^{n-1}\int [T^*_{F_t}W]\cdot [\tilde{\sigma}(W)].
\]

On the other hand,  
by the specialization formula for characteristic cycle for $\ind_W$ (see \cite[Thm. 10.3.55]{MS22} for example), taking the limit $t$ approaching to $0$ we have
\[
\GSV(\nu, D, 0)= (-1)^{n-1}\lim_{t\to 0}\int [T^*_{F_t}W]\cdot [\tilde{\sigma}(W)]=\int [\textup{CC}(\Psi_f(\ind_{W}))]\cdot [\tilde{\sigma}(W)].
\]
The last integral is well defined and is equal to $\sharp_0 \left( [\textup{CC}(\Psi_f(\ind_{W}))]\cdot [\tilde{\sigma}(W)] \right)$ since by Proposition \ref{prop: nu is tangent to D} the intersection of $\textup{CC}(\Psi_f(\ind_{W}))$ and $\tilde{\sigma}(W)$ takes place only at $(0,0)$. The last expression is equal to $\Eu(\tilde{\nu},\Psi_f(\ind_{W}),0)$ by Corollary \ref{coro:intersection formula for Euobs}.

\end{proof}

From the microlocal point of view, it seems very natural to define a GSV index for any holomorphic vector field $\tilde{\nu}$ satisfying condition $(\ast)$ by $\GSV(\nu, D, 0):=\Eu(\tilde{\nu},\Psi_f(\ind_{W}),0)$. However, the relation to the obstruction theory of vector fields in this case looks somewhat obscure.

\subsection{The Microlocal logarithmic Indices of vector fields}
We first review the logarithmic index introduced by Aleksandrov in \cite{AA05}.
In this subsection we assume that $(D, 0)\subset (W, 0)$ is the germ of a reduced hyprsurface, where $W$ is an open subset of $\CC^n$. Let $\tilde{\nu}$ satisfy condition ($\ast$) above. In earlier part of the paper we only need $\Sing(\nu)=\{ 0 \}$, but here we make the stronger assumption that $\Sing(\tilde{\nu})=\{0\}$. We fix a Hermitian metric and denote by $\tilde{\sigma}$ the dual differential form of $\tilde{\nu}$. 

Let $\Omega^1_W(\log D)$ be the $\sO_W$-dual of $\Der_W(-\log D)$ and let $\Omega_W^q(\log D)$ be the $q$-th wedge of $\Omega^1_W(\log D)$, the module of meromorphic $q$-forms with poles along $D$. 
The contraction with the logarithmic vector field $\tilde{\nu}$ induces the following complex 
of $\sO_{W,0}$-modules:
\[
\begin{tikzcd}
0 \arrow[r] & \Omega^n_{W,0}(\log D) \arrow[r, "i_{\nu}"] &\Omega^{n-1}_{W,0}(\log D)   \arrow[r, "i_{\nu}"] & \cdots \arrow[r, "i_{\nu}"] & \Omega^{1}_{W,0}(\log D)  \arrow[r, "i_{\nu}"] &  \sO_{W,0} \arrow[r] & 0
\end{tikzcd}
\] 
We denote this complex by $(\Omega_{W,0}^\bullet(\log D), i_{\tilde{\nu}})$.
Since $\Sing(\tilde{\nu})=\{0\}$,  by \cite[Lemma 1]{AA05}
 the  homology  groups of $(\Omega_{W,0}^\bullet(\log D), i_{\tilde{\nu}})$ are   finite dimensional $\CC$-vector spaces. 
\begin{defi}
 The logarithmic index of $\tilde{\nu}$ along $D$ at  $0$ is defined by 
 \[
 \Ind_{\log}(\tilde{\nu}, D, 0):=\chi \left(\Omega_{W,0}^\bullet(\log D), i_{\tilde{\nu}} \right) =\sum_{i=0}^n (-1)^i \dim_\CC H_i(\Omega_{W,0}^\bullet(\log D), i_{\tilde{\nu}}) \/.
 \]
\end{defi}

We now provide geometric interpolation  for this logarithmic index  in two cases: when $D$ has an isolated singularity at 
$0$ or when $D$ is holonomic, free, and strongly Euler homogeneous. 

First we consider the case of isolated singularity. Let $\tau_D(0)$ and $\mu_D(0)$ denote the Tjurina number and the Milnor number of $D$ at $0$, respectively.
\begin{prop}
\label{prop; micrologFormulaiso}
If the germ $(D, 0)$ has  an isolated singularity at $0$, we have 
\[
\Ind_{\log}(\tilde{\nu}, D, 0)=\sharp_0 \left([\textup{CC}(\ind_{W\setminus D})]\cdot [\tilde{\sigma}(W)]  \right)+(-1)^{n-1}\left(\tau_D(0)-\mu_D(0) \right) \/.
\]
\end{prop}
\begin{proof}
Let $f$ be a defining equation of $D$ in $W$. For $q=0,1, \cdots ,n$ we consider the $\sO_{D,0}$-module of differential forms $\Omega_{D, 0}^q= \Omega_{W, 0}^q/(f\cdot \Omega_{W,0}^q+df\wedge \Omega_{W,0}^{q-1})$. The contraction with the restricted vector field $\nu$ induces a homological complex of $\sO_{D,0}$-modules 
\[
(\Omega_{D,0}^\bullet, i_\nu): 
\begin{tikzcd}
0 \arrow[r] & \Omega^n_{D,0}  \arrow[r, "i_{\nu}"] &\Omega^{n-1}_{D,0}  \arrow[r, "i_{\nu}"] & \cdots \arrow[r, "i_{\nu}"] & \Omega^{1}_{D,0} \arrow[r, "i_{\nu}"] &  \sO_{D,0} \arrow[r] & 0
\end{tikzcd}
\]
and its truncation
\[
(\overline{\Omega}_{D,0}^\bullet, i_\nu): 
\begin{tikzcd}
0 \arrow[r] & \Omega^{n-1}_{D,0}  \arrow[r, "i_{\nu}"] &\Omega^{n-2}_{D,0}  \arrow[r, "i_{\nu}"] & \cdots \arrow[r, "i_{\nu}"] & \Omega^{1}_{D,0} \arrow[r, "i_{\nu}"] &  \sO_{D,0} \arrow[r] & 0
\end{tikzcd} \/.
\]
The homology  groups of both complexes are  finite dimensional $\CC$-vector spaces and we can assign two homological indices
\[
\Ind^{GM}_{hom}(\nu, D, 0)
:=\chi(\overline{\Omega}_{D,0}^\bullet, i_\nu)\/; \quad 
\Ind^{A}_{hom}(\nu, D, 0):=
\chi(\Omega_{D,0}^\bullet, i_\nu) \/.
\] 

Since $D$ has only isolated singularity, 
combining   Proposition~\ref{prop; microGSVFormula} with \cite[Theorem 3.5]{MR1642757} we have 
\[
\Ind^{GM}_{hom}(\nu, D, 0)=\GSV(\nu, D, 0)=\sharp_0 \left([\textup{CC}(\Psi_f(\ind_W))]\cdot [\tilde{\sigma}(W)]  \right) \/.
\]

On the other hand, by definition we see that $\Omega_{D,0}^n=\sO_{W, 0}/(f, \cJ_f)$ is exactly the Tjurina algebra  of $f$,
where $\cJ_f$ denotes the Jacobian ideal of $f$.
 Therefore by   \cite[P8]{MR1642757}   we see that
\[
 \Ind^{A}_{hom}(\nu, D, 0)-\Ind^{GM}_{hom}(\nu, D, 0)
 = (-1)^n \dim_\CC \sO_{W, 0}/(f, \cJ_f)=(-1)^n \tau_D(0)  \/.
\]
  By \cite[Proposition 1]{AA05} we then have 
\begin{equation}
\label{eq; logandtau}
\Ind_{\log}(\tilde{\nu}, D, 0) 
=\Ind_{PH}(\tilde{\nu}, 0)-\GSV(\nu, D, 0)-(-1)^n \tau_D(0) \/.
\end{equation}

Since $D$ has isolated singularity we have $\Psi_f(\ind_W)=\ind_D+(-1)^{n-1}\mu_D(0)\cdot \ind_{\{0\}}$. Therefore 
\begin{align*}
\sharp_0 \left([\textup{CC}(\ind_{W\setminus D})]\cdot [\tilde{\sigma}(W)]  \right)
=&\sharp_0 \left([\textup{CC}(\ind_{W})]\cdot [\tilde{\sigma}(W)]  \right)-\sharp_0 \left([\textup{CC}(\ind_{D})]\cdot [\tilde{\sigma}(W)]  \right) \\
(\text{by Example~\ref{exam; microPH}}) \  =
&  \Ind_{PH}(\title{\nu}, 0)
-\sharp_0 \left([\textup{CC}(\Psi_f(\ind_W))]\cdot [\tilde{\sigma}(W)]  \right)    -(-1)^n \mu_D(0)   \\
(\text{by Proposition~\ref{prop; microGSVFormula}}) \  
=&  \Ind_{PH}(\title{\nu}, 0)
-\GSV(\nu, D, 0) -(-1)^n \mu_D(0)   \\
(\text{by equation \eqref{eq; logandtau}})  \ 
=& \Ind_{\log}(\tilde{\nu}, D, 0) +(-1)^n\left(\tau_D(0)-\mu_D(0)\right) \/. \qedhere
\end{align*}
\end{proof}
\begin{rema}
We note that the term homological index is used in the literature to refer to both indices $\Ind^{GM}_{hom}(\nu, D, 0)$ and $\Ind^{A}_{hom}(\nu, D, 0)$. Readers should carefully check which one is being used when applying formulas.
\end{rema}

We have the following immediate corollary. 
\begin{coro}
\label{coro; microcriteriaofWH}
Assume that $D$ has an isolated singularity at $0$. The  following statements are equivalent.
\begin{enumerate}
	\item $D$ is quasi-homogeneous at $0$, i.e., $f\in \cJ_f$.
	\item There exists a holomorphic vector field $\tilde{\nu}$ in $W$, with an isolated singularity at $0$ and logarithmic to $D$ at $0$, such that 
\[
\Ind_{\log}(\tilde{\nu}, D, 0)=\sharp_0 \left([\textup{CC}(\ind_{W\setminus D})]\cdot [\tilde{\sigma}(W)]  \right) \/.
\]
\item For all holomorphic vector fields $\tilde{\nu}$ in $W$ with an isolated singularity at $0$ and logarithmic to $D$ at $0$, we have 
\[
\Ind_{\log}(\tilde{\nu}, D, 0)=\sharp_0 \left([\textup{CC}(\ind_{W\setminus D})]\cdot [\tilde{\sigma}(W)]  \right) \/.
\]
\end{enumerate}
\end{coro}

Next we turn to the other case where $(D, 0)$ is the germ of a free divisor, i.e., $\Der_{W,0}(-\log D)$ is a free $\sO_{W,0}$-module. Let $\{\chi_1, \chi_2, \cdots ,\chi_n \}$ be an $\sO_{W, 0}$-basis of $\Der_{W, 0}(-\log D)$ so that
\[
\tilde{\nu}= \sum_{i=1}^n \alpha_i\cdot \chi_i \in (\mathfrak{m}_{W, 0}\Der_{W, 0})\cap \Der_{W, 0}(-\log D).
 \]

It is clear that the common zeroes of the coefficients $\alpha_1,\ldots,\alpha_n$ is contained in $\Sing(\tilde{\nu})=\{0\}$. So either $(\alpha_1,\ldots,\alpha_n)=\sO_{W,0}$ or $\alpha_1,\ldots,\alpha_n$ form a regular sequence.

\begin{prop}[{\cite[Corollary 2]{AA05}}]
\label{prop; logindex}  If $\tilde{\nu}$ has an isolated singularity at $0$, then 
\[
\Ind_{\log}(\tilde{\nu}, D, 0)=\dim_\CC \sO_{W,0}/(\alpha_1, \cdots ,\alpha_n) \/.
\]
%In particular, if $\tilde{\nu}$ has a nondegenerate singularity at $0$, then  $\Ind_{\log}(\tilde{\nu}, D, 0)=0$. 
\end{prop}

\begin{prop}
\label{prop; micrologFormulafree}
If $D$ is a holonomic strongly Euler homogeneous free divisor, then 
\[
\Ind_{\log}(\tilde{\nu}, D, 0)=  \sharp_0 \left([\textup{CC}(\ind_{W\setminus D})]\cdot [\tilde{\sigma}(W)]  \right)   \/.
\]
\end{prop} 

\begin{proof}
First, note that Proposition \ref{prop: nu is tangent to D} implies that $\tilde{\nu}$ is tangent to $\ind_{W\setminus D}$ so the intersection number $\sharp_0 \left([\tilde{\sigma}(W)]\cdot [\textup{CC}(\ind_{W\setminus D})]\right)$ is well-defined. Composing $j:T^*W \to T^*W(\log D)$ with the section $\tilde{\sigma}: W \to T^*W$, we get a section $j\circ \tilde{\sigma}: W \to T^*W(\log D)$. By \cite[Theorem 4.6 and Corollary 5.10]{LZ24-1} We have
\[
\sharp_0\left([\textup{CC}(\ind_{W\setminus D})]\cdot [\tilde{\sigma}(W)]\right) = (-1)^{n}\sharp_0\left( [j^{-1}(W)]\cdot [\tilde{\sigma}(W)]\right) =(-1)^n\sharp_0\left([W]\cdot [j\circ \tilde{\sigma}(W)]\right).
\]
The standard Hermittian metric on $\mathbb{C}^n$ gives us an $\mathbb{R}$-linear isomorphism $TW(-\log D) \cong T^*W(\log D)$. Under this isomorphism $j\circ \tilde{\sigma}$ corresponds to $\tilde{\nu}$ regarding as a holomorphic section of $TW(-\log D)$. 
Therefore we have
\[
(-1)^n\sharp_0\left([j\circ \tilde{\sigma}(W)] \cdot [M]\right)=\sharp_0\left([W]\cdot\tilde{\nu}(W)\right)=\dim_\CC \sO_{W,0}/(\alpha_1, \cdots ,\alpha_n)
\]
where the last equality follows from \cite[Proposition 7.1]{MR1644323}.
\end{proof}

\begin{exam}
When $n=2$ and $(D, 0)$ is the germ of a  curve singularity, $\Der_{W, 0}(-\log D)$ is a free $\sO_{W,0}$-module and  $D$ is a holonomic free divisor.  
If moreover D is weighted homogeneous, then Proposition~\ref{prop; micrologFormulaiso} and Proposition~\ref{prop; micrologFormulafree}  lead to the same conclusion since $\tau_D(0)=\mu_D(0)$.
\end{exam}

\subsection{Example: $\chi$-number,multiplicity and polar intersection number of planar foliations}
\label{sec; foliationontheplane}
We will consider foliations on the plane  and give a microlocal explanation of the $\chi$-number, the multiplicity and the polar multiplicity considered by A.~Fern\'andez-P\'erez et~al.  in \cite{MR4886053}. Since the original definitions of these indices are quite involved and fall outside the scope of this paper, we will skip the details and only state our explanation, referring the reader to \cite{MR772129,MR4886053,MR4012805} for  details. 

Let $\cF$ be a foliation on a planar region $W\subset\CC^2$  given by the vector field $\tilde{\nu}$ with an isolated singularity at $0\in W$. Let $\tilde{\sigma}$ be the section of $T^*W$ induced by any Hermitian metric on $W$.

By a separatrix we mean a reduced irreducible curve germ $(B, 0)$ such that $\cF$ is logarithmic along $B$.  The sepratrices of $\cF$ are either isolated or dicritical. Any given foliation $\cF$ admits only finitely many isolated sepatrices but may adimit infinitely many dicritical ones.  
We denote the set of isolated sepratrices by $\{B_1, B_2, \cdots ,B_r\}$ and the set of dicritical  sepratrices by 
$\{ C_\alpha|\alpha\in I\}$. A balanced divisor of separatrices for $\cF$ is a divisor of curve germs of the form
\[
\cB=\sum_{i=1}^r B_i +\sum_{\alpha\in J } m_\alpha C_\alpha 
\]
where $J\subset I$ is a finite subset and the coefficients $m_\alpha$ satisfy certain balanced conditions (see  \cite[Definition 2.3]{MR4012805} for details). 

\begin{defi}
Given any divisor $\cB=\sum_B a_B B$ with only finitely many non-zero $a_B$, we define the constructible functions $\ind_{\cB}$ ane $\ind_{W\setminus \cB}$ as
\[
\ind_{\cB}=  
\sum_B a_B\ind_{B}   - (\deg \cB -1 )\ind_{\{0\}}, \quad \ind_{W\setminus \cB}=\ind_W-\ind_{\cB}
\]
where $\deg \cB = \sum_B a_B$. 
\end{defi}

\begin{prop} 
Let $C$ be any  separatrix of $\cF$. Let $\mu_0(\cF, C)$ 
and $i_0(\cP^{\cF}, C)$ be 
 the multiplicity of $\cF$ along $C$ and the  polar intersection number of $\cF$ with respect to $C$ defined in 
\cite[\S 4]{MR4886053}. We have
\[
\mu_0(\cF, C)= \Sch(\tilde{\nu}, C, 0) =
\sharp_0 \left( [\textup{CC}(\ind_C)]\cdot [\tilde{\sigma}(W)] \right) \/, \quad i_0(\cP^{\cF}, C)=  
\sharp_0 \left( [\textup{CC}(\Eu_C)]\cdot [\tilde{\sigma}(W)] \right) \/.
\]
\end{prop}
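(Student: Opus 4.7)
The plan is to deduce both assertions from the microlocal intersection formulas already established in the paper, with the matching to the concrete definitions of $\mu_0(\cF,C)$ and $i_0(\cP^{\cF},C)$ from \cite{MR4886053} carried out by unpacking those definitions. For the first chain, the equality $\Sch(\tilde{\nu},C,0)=\sharp_0([\textup{CC}(\ind_C)]\cdot[\tilde{\sigma}(W)])$ follows from Proposition \ref{prop; microSchFormula} applied to $X=C$, which is a reduced irreducible plane curve with at most an isolated singularity at $0$. The remaining equality $\mu_0(\cF,C)=\Sch(\tilde{\nu},C,0)$ comes from recognizing $\mu_0(\cF,C)$, as defined in \cite{MR4886053}, as the order of vanishing at $0$ of the restriction $\tilde{\nu}\vert_C$ measured on the normalization of $C$, which agrees with the Schwartz index on a curve with isolated singularity by classical results on Schwartz indices for vector fields on curves (see e.g. \cite[\S 3.3]{BSS87}).

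For the second assertion, I apply Theorem \ref{theo:microlocal for Euobs} with $d=\dim C=1$. Since $\Eu_C=-\Eu_C^\vee$, we have $\textup{CC}(\Eu_C)=-T^*_C W$ and therefore
\[
\sharp_0\left([\textup{CC}(\Eu_C)]\cdot[\tilde{\sigma}(W)]\right) = -\sharp_0\left([T^*_C W]\cdot[\tilde{\sigma}(W)]\right) = \textup{Eu}_C^{\tilde{\nu}}(0).
\]
Next, Proposition \ref{prop; EulerObs} gives $\textup{Eu}_C^{\tilde{\nu}}(0)=\int_{Z_0}[Z]\cdot[\chi(Z)]$, where $Z=\tilde{C}$ is the Nash blowup of the curve $C$ (equivalently its normalization) and $\chi$ is the lifted holomorphic section of the Nash line bundle $\cT$ on $\tilde{C}$. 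The integer on the right-hand side is the total vanishing order of $\chi$ over the preimage of $0$ in $\tilde{C}$.

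To finish, I need to identify this total vanishing order with the polar intersection number $i_0(\cP^{\cF},C)$ of \cite[\S 4]{MR4886053}. Writing $\tilde{\nu}= P\partial_x+Q\partial_y$, the polar curve $\cP^{\cF}$ in the direction $(p:q)$ is locally $\{pQ-qP=0\}$, and $i_0(\cP^{\cF},C)$ is by definition its intersection multiplicity with $C$ at $0$ for a generic direction. Pulling back via a Puiseux parametrization $\phi\colon (\CC,0)\to (C,0)$ and expressing $\tilde{\nu}(\phi(t))$ in a local frame of $\cT$ adapted to the Nash tangent direction, one checks that for generic $(p:q)$ the order of vanishing of $\chi$ at $t=0$ agrees with $\textup{ord}_0(pQ(\phi(t))-qP(\phi(t)))$, which yields the desired equality.

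The main obstacle is the two definitional comparisons appearing at the end of the first paragraph and the third paragraph: translating the obstruction-theoretic definitions of $\mu_0(\cF,C)$ and $i_0(\cP^{\cF},C)$ from \cite{MR4886053} into the microlocal and algebraic language used here. As the authors note, the original definitions are involved, but once the translations are made the remainder is a straightforward application of the main theorems of Sections \ref{sec; Eulerobstruction} and \ref{sec; microlocalintersectionformula}.
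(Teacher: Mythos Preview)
Your overall strategy is sound, and both chains of equalities can indeed be closed in the way you outline. However, your route differs noticeably from the paper's own proof, which is considerably shorter and leans almost entirely on external citations to \cite{MR4886053} and \cite{Ploski95}. For the first identity, the paper does not argue $\mu_0(\cF,C)=\Sch(\tilde{\nu},C,0)$ directly; instead it quotes \cite[Proposition 5.7]{MR4886053} together with \cite[Lemma 2.1]{Ploski95} to obtain $\mu_0(\cF,C)=\GSV(\tilde{\nu},C,0)+\mu_C(0)$, and then invokes the relation $\GSV=\Sch+(-1)^{n-1}\mu_C$ (Remark~\ref{rema:GSV-SCH=mu}, here $n=2$) to conclude. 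Your direct identification of $\mu_0$ with the Schwartz index via the normalization is morally correct, but the reference you give (\cite[\S 3.3]{BSS87}) does not quite state this; you would need to supply the curve-case computation yourself. For the second identity, the paper simply declares it a restatement of \cite[Proposition 4.5]{MR4886053}, whereas you unpack the Euler obstruction via the Nash blowup (normalization) and match vanishing orders against the polar intersection number; this is a valid and more self-contained argument, essentially reproving what \cite{MR4886053} already records. In summary: your proof is correct in outline and more internal to the paper's machinery, at the cost of redoing work that the paper outsources to precise external statements; the paper's proof is shorter but less transparent without those references in hand.
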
 
\begin{proof}
Let $\gamma(t)=(x(t), y(t))$ be  any Puiseux parametrization of $C$ such that $x(t)\neq 0$. Let $f(x,y)$ be the defining equation of $C$, 
by \cite[Proposition 5.7]{MR4886053} and   \cite[Lemma 2.1]{Ploski95} we have 
\[
\mu_0(\cF, C) = \GSV (\tilde{\nu}, C, 0) + \left(\textup{ord}_0 \partial_y f(\gamma(t)) - \textup{ord}_0(x'(t)) \right) =\GSV (\tilde{\nu}, C, 0) +  \mu_C(0)  \/.
\] 
Then we have 
$
\mu_0(\cF, C)     = \Sch(\tilde{\nu}, C, 0)  
$ by Remark~\ref{rema:GSV-SCH=mu}.
The second equality is  simply a  restatement of \cite[Proposition  4.5]{MR4886053}.
\end{proof}

Recall that $\mu_0(\cF)=\sharp_0 \left( [\textup{CC}(\ind_{W})]\cdot [\tilde{\sigma}(W)] \right)$. Then \cite[Proposition 4.7]{MR4886053}  can be restated as
\begin{prop} 
Let $\chi_0(\cF)$ be the $\chi$-number of the foliation $\cF$ defined in \cite[\S 3]{MR4886053}. Let  $\cB$ any balanced divisor  of $\cF$, then  we have
\begin{equation}
\label{eq; chinumbermicro}
\chi_0(\cF)=\sharp_0 \left( [\tilde{\sigma}(W)]\cdot [\textup{CC}(\ind_{W\setminus \cB})] \right)  \geq 0
  \/.
\end{equation}
The last inequality is due to  \cite[Proposition 3.1]{MR4886053}.
\end{prop}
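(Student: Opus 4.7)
The plan is to reduce the microlocal intersection number to three elementary pieces by exploiting the $\mathbb{Z}$-linearity of both the characteristic cycle construction and the local intersection product. By the definition of $\ind_{\cB}$ recalled just above the statement,
\[
\ind_{W\setminus \cB} = \ind_W - \sum_B a_B\, \ind_B + (\deg \cB - 1)\,\ind_{\{0\}},
\]
so expanding $\textup{CC}(\ind_{W\setminus \cB})$ and intersecting with $[\tilde{\sigma}(W)]$ splits $\sharp_0 \left( [\tilde{\sigma}(W)]\cdot [\textup{CC}(\ind_{W\setminus \cB})] \right)$ into the corresponding $\mathbb{Z}$-linear combination of three local intersection numbers.

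Next I would evaluate each of the three pieces using results already established in this section. By Example~\ref{exam; microPH}, the first piece equals $\Ind_{\PH}(\tilde{\nu},0) = \mu_0(\cF)$. Each middle piece equals $\Sch(\tilde{\nu},B,0) = \mu_0(\cF,B)$ by the proposition immediately preceding this one. For the last piece, $\ind_{\{0\}} = \Eu_{\{0\}} = \Eu^\vee_{\{0\}}$, so $\textup{CC}(\ind_{\{0\}}) = T^*_{\{0\}}W$ is the cotangent fibre at the origin, and Corollary~\ref{coro:intersection formula for Euobs} gives
\[
\sharp_0\left([T^*_{\{0\}}W]\cdot [\tilde{\sigma}(W)]\right) = \Eu^{\tilde{\nu}}_{\{0\}}(0) = 1,
\]
where the last equality holds because the Nash tangent bundle of a point has rank zero, reducing the obstruction to the Euler characteristic of a point. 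Assembling the three computations yields
\[
\sharp_0 \left( [\tilde{\sigma}(W)]\cdot [\textup{CC}(\ind_{W\setminus \cB})] \right) = \mu_0(\cF) - \sum_B a_B\, \mu_0(\cF, B) + \deg \cB - 1,
\]
which is precisely the expression for $\chi_0(\cF)$ obtained in \cite[Proposition 4.7]{MR4886053}. The stated nonnegativity is then imported directly from \cite[Proposition 3.1]{MR4886053}.

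The only delicate point is to verify that the orientation conventions line up so that the last local intersection genuinely equals $+1$ and not $\pm 1$. This is handled uniformly by the sign $(-1)^{\dim Z}$ already built into the dualized basis $\Eu^\vee_Z = (-1)^{\dim Z}\Eu_Z$ used in the definition of $\textup{CC}(\gamma)$, and is exactly the same bookkeeping that produced the factor $(-1)^n$ in Example~\ref{exam; microPH}. Once consistency of signs is checked across the three pieces, the proof reduces to pure linearity plus the previously established microlocal formulas for $\mu_0(\cF)$ and $\mu_0(\cF,B)$.
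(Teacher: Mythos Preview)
Your argument is correct and is precisely what the paper does: it presents the proposition as a direct restatement of \cite[Proposition 4.7]{MR4886053}, obtained by expanding $\ind_{W\setminus\cB}$ linearly and invoking the already-established microlocal identities $\mu_0(\cF)=\sharp_0([\textup{CC}(\ind_W)]\cdot[\tilde{\sigma}(W)])$ and $\mu_0(\cF,B)=\sharp_0([\textup{CC}(\ind_B)]\cdot[\tilde{\sigma}(W)])$. Your explicit verification that $\sharp_0([\textup{CC}(\ind_{\{0\}})]\cdot[\tilde{\sigma}(W)])=1$ fills in the only step the paper leaves implicit.
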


With these microlocal intersection formulas in mind, quite a few relations among these indices become obvious. For example, \cite[Corollary 5.3]{MR4886053} can be explained by the identity of constructible functions 
\[
\Psi_f(\ind_W)= \ind_C +\Phi_f(\ind_W) =\Eu_C - (\Eu_C(0) -1-\mu_C(0))\ind_{\{0\}} \/.
\]
and \cite[Corollary (5.6) ]{MR4886053}  follows from the definition of $\ind_\cB$ and $\ind_{W\setminus \cB} $.

When $\cF$ is non-dicritical, there is no dicritical separatrices and  the divisor of all separatrices $D=\cup_{i=1}^r B_i$   is the unique balanced divisor.   This is   a holonomic and free divisor, and we have the following property from Proposition~\ref{prop; micrologFormulafree}.
\begin{prop}
If moreover $D$ is weighted homogeneous at $0$, we have 
\[
\chi_0(\cF) = \Ind_{\log} (\tilde{\nu}, D, 0) \/.
\]
\end{prop}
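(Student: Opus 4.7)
The plan is to combine the immediately preceding proposition with Proposition~\ref{prop; micrologFormula}. In the non-dicritical case the divisor of all separatrices $D=\cup_{i=1}^r B_i$ is the unique balanced divisor $\cB$, so the restatement of \cite[Proposition 4.7]{MR4886053} specializes to
\[
\chi_0(\cF) = \sharp_0 \left( [\tilde{\sigma}(W)]\cdot [\textup{CC}(\ind_{W\setminus D})] \right).
\]
On the other hand, Proposition~\ref{prop; micrologFormula} identifies exactly this intersection number with $\Ind_{\log}(\tilde{\nu}, D, 0)$ under the hypotheses that $D$ is holonomic, strongly Euler homogeneous, and free at $0$. Since $D$ is already known to be holonomic and free (as recorded in the paragraph preceding the statement), the whole proof reduces to verifying that the additional assumption of weighted homogeneity at $0$ supplies the missing ingredient, namely strong Euler homogeneity.

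For this remaining step I would take a weighted homogeneous defining equation $h(x,y)\in \sO_{W,0}$ of $D$ with positive weights $w_1,w_2$ and positive weighted degree $d$. Euler's identity then gives
\[
w_1 x\,\partial_x h + w_2 y\,\partial_y h = d\cdot h,
\]
so the vector field $\chi = \tfrac{1}{d}(w_1 x\,\partial_x + w_2 y\,\partial_y)$ lies in $\mathfrak{m}_{W,0}\cdot \Der_{W,0}$ and satisfies $\chi(h)=h$. By Definition~\ref{defn; EH} this is precisely what it means for $D$ to be strongly Euler homogeneous at $0$, which completes the chain of implications and yields the desired equality $\chi_0(\cF)=\Ind_{\log}(\tilde{\nu},D,0)$.

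There is no genuine obstacle here: both main ingredients have been packaged into Proposition~\ref{prop; micrologFormula} and the preceding proposition, and the weighted homogeneity assumption is tailor-made to produce the strong Euler homogeneity that bridges them. The only point requiring a small care is that the weighted Euler vector field indeed lands in the maximal ideal times $\Der_{W,0}$, which holds because the weights are strictly positive.
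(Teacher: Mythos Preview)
Your proposal is correct and is precisely the argument the paper has in mind: the proposition is stated without a written proof because it follows immediately from the preceding restatement of \cite[Proposition~4.7]{MR4886053} (giving $\chi_0(\cF)=\sharp_0\bigl([\tilde{\sigma}(W)]\cdot[\textup{CC}(\ind_{W\setminus D})]\bigr)$ in the non-dicritical case) together with Proposition~\ref{prop; micrologFormula}, once one observes that weighted homogeneity at $0$ yields strong Euler homogeneity via the weighted Euler field. One minor point you could make explicit: Proposition~\ref{prop; micrologFormula} asks for strong Euler homogeneity at every point of the germ, not just at $0$, but this is automatic at the smooth points of a plane curve and your argument handles the only possible singular point.
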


We close this subsection with the following question.
\begin{ques}
Let $\cB=\sum_{i=1}^r B_i +\sum_{\alpha\in J } m_\alpha C_\alpha $ be a divisor of separatrices of $\cF$. Does the equality $\chi_0(\cF)=\sharp_0 \left( [\tilde{\sigma}(W)]\cdot [\textup{CC}(\ind_{W\setminus \cB})] \right) $ in ~\eqref{eq; chinumbermicro} necessarily imply that $\cB$ is a balanced divisor ?
\end{ques}

\section{Global index formulas for one-dimensional holomorphic foliations}
\label{sec; chernclassoffoliation}
In this section we consider a  reduced hypersurface $D$ on a compact complex manifold $M$ cut by a global section $f\in H^0(M, \sO(D))$ and 
a logarithmic one-dimensional foliation  $\cF$  along $D$ with tangent bundle $L$. Let  $s_\cF$ be the global holomorphic section  of $TM\otimes L^\vee$ corresponding to $\cF$.  We assume that $\cF$ has only isolated singularities.

Fix an Hermitian metric  on $TM\otimes L^\vee$ and identify $TM\otimes L^\vee$ with $T^*M \otimes L$ as real vector bundles on $M$. This Hermitian metric  transforms the holomorphic section $s_\cF$ to a $C^\infty$ section $\sigma_\cF$ of $T^*M \otimes L$. 
Since $\cF$ has only isolated singularities,   $\sigma_\cF$ also has only isolated singularities, i.e. $|\sigma_\cF(M) \cap M|<\infty$. Take an open covering $\{U_\alpha \}$ of $M$ such that $L\vert_{U_\alpha}$ is trivial for every $\alpha$. On each $U_\alpha$, the sections $s_\cF$ and its dual $\sigma_\cF$ can be represented by a holomorphic vector field $\tilde{\nu}_\alpha$ and a $C^\infty$ differential form $\sigma_\alpha$. Let also $D_\alpha=D\cap U_\alpha$.

\begin{defi} 
For any $x\in \Sing(\cF)$ we define the Poincar\'e-Hopf index,
the Schwartz index, the GSV index and the logarithmic index  of the logarithmic foliation $\cF$ along $D$  at $x$ by 
\begin{align*}
\Ind_{\PH}(\cF, x)= \Ind_{\PH}(\tilde{\nu}_\alpha, x)\/,  &\quad
\Sch(\cF, D, x) =   \Sch(\tilde{\nu}_\alpha, D_\alpha, x)\/,  \\
\GSV(\cF, D, x) =   \GSV(\tilde{\nu}_\alpha, D_\alpha, x)  \/, &\quad 
\Ind_{\log}(\cF, D, x) =   \Ind_{\log}(\tilde{\nu}_\alpha, D_\alpha ,x) \/.
\end{align*}
whenever these local indices are defined for $x\in U_\alpha$. 
It is easy to see that all these indices are independent of the choices of local representing vector fields. 
\end{defi}

There are many index formulas in the theory of one-dimensional holomorphic foliation, and a lot of these can be considered as generalizations of the Baum-Bott formula~\eqref{eq0}. The prototypical proof in differential geometry uses the Chern-Weil theory where one finds a differential form to represent the Chern class in question. The local indices can be computed with the help of the Grothendieck residue of the differential form. In constrast, we interpret these local indices as intersection numbers of $\sigma_\cF(M)$ with appropriate jet characteristic cycles. We obtain the global index formulas by deforming $\sigma_\cF(M)$ to the zero section of $T^*M\otimes L$ and apply Aluffi's twsited shadow formula Proposition~\ref{prop; jettwistformula}. Sometimes this strategy gives shorter and more conceptual proofs.

For instance, let's revisit Baum-Bott formula~\eqref{eq0}. It is worth mentioning that there is a one-line proof using the standard intersection theory in algebraic geometry which goes as
\begin{equation}
\label{eq; sumofPH}
\int_M c(TM-L)\cap [M]= \int_M c_n(TM\otimes L^\vee)\cap [M] = \sum_{x\in \Sing(\cF)} \mu_x(\cF) =\sum_{x\in \Sing(\cF)} \Ind_{\PH}(\cF, x)
\end{equation}
where $\mu_x(\cF)=\mu_x(\tilde{\nu}_\alpha)$ if $x\in U_\alpha$. The second equality is the key step and it follows from \cite[Proposition 14.1]{MR1644323} since $s_\cF$ is a regular section of $TM \otimes L^\vee$. 

Instead of using the section $s_\cF$ in the proof above, we may as well use the dual $\sigma_\cF$. This will yield an essentially same proof, but tailored better to the strategy described above. Indeed, since 
\[
\sum^n_{i=0}(-1)^{n-i}c_i(TM)\cap[M]=(-1)^nc(T^*M)\cap [M]=\textup{Shadow}_{T^*M}(\textup{CC}(\ind_M)),
\] 
by Proposition~\ref{prop; jettwistformula} (applied to $\alpha_i=(-1)^{i}c_{n-i}(TM)\cap [M]$) we have 
\[
 \int_M c(TM-L)\cap [M]
 =\sum_{i=0}^n \int_M (-1)^{i}c_1(L)^{i}c_{n-i}(TM)\cap [M]
 =\deg \left([\textup{JCC}(\ind_M)]\cdot [M]  \right)    \/. 
\]
Deformation invariance of the intersection number implies that 
\[
\deg \left([\textup{JCC}(\ind_M)]\cdot [M]  \right) 
=  \deg \left([\textup{JCC}(\ind_M)]\cdot [\tilde{\sigma}_\cF(M)] \right)   
=  \sum_{x\in \Sing(\cF)}\sharp_x \left([\textup{JCC}(\ind_M)]\cdot [\tilde{\sigma}_\cF(M)]\right)  \/.
\]
Finally by Example~\ref{exam; microPH} we have  $\Ind_{\PH}(\cF, x)= \sharp_x \left([\textup{JCC}(\ind_M)]\cdot [\tilde{\sigma}_\cF(M)]\right)$. This completes the alternative proof.

\begin{defi}
\label{defn; **}
We say a logarithmic one-dimensional foliation  $\cF$ along $D$ satisfies   ($\ast\ast$), if for each   $x\in \Sing(\cF)\cap D$  there exists an analytic neighborhood $U_x$ of $x$ such that the local vector field representing   $\cF$  satisfies the condition ($\ast\ast$) in $U_x$.
\end{defi}

Combining
Proposition~\ref{prop; microSchFormula}, Proposition~\ref{prop; stratification}, Proposition~\ref{prop; microGSVFormula}, Proposition~\ref{prop; micrologFormulaiso} and Proposition~\ref{prop; micrologFormulafree} we obtain the following comparison theorem which will be used for deriving global index formulas.
\begin{theo}\ 
\label{theo; comparison}
Let $\cF$ be a logarithmic one-dimensional foliation along $D$ with tangent bundle $L$. Assume $\cF$ has only isolated singularities and $x\in \Sing(\cF)\cap U_\alpha$.
\begin{enumerate}
\item If $D$ has only isolated singularities, then 
\begin{align*}
\Sch(\cF, D, x)  =& \sharp_x \left([\textup{CC}( \ind_D)]\cdot [\sigma_\alpha(U_\alpha)]\right) = \sharp_x \left([\textup{JCC}( \ind_D)]\cdot [\sigma_\cF(M)]\right) \/, \\
\Ind_{\log}(\cF, D, x)=& \sharp_0 \left([\textup{CC}(\ind_{M\setminus D})]\cdot [\tilde{\sigma}(W)]  \right)+(-1)^{n-1}\left(\tau_D(x)-\mu_D(x) \right) \/.
\end{align*}

\item If $\cF$  satisfies  ($\ast\ast$), then 
\[
\GSV(\cF, D, x)  =  \sharp_x \left([\textup{CC}(\Psi_f(\ind_M)]\cdot [\sigma_\alpha(U_\alpha)]\right) = \sharp_x \left([\textup{JCC}(\Psi_f(\ind_M)]\cdot [\sigma_\cF(M)]\right)  \/.
\]
\item If $D$ is a holonomic strongly Euler homogeneous free divisor, then
\[
\Ind_{\log}(\cF, D, x)=  \sharp_x \left([\textup{CC}(\ind_{M\setminus D})]\cdot [\sigma_\alpha(U_\alpha)]\right) =\sharp_x \left([\textup{JCC}(\ind_{M\setminus D})]\cdot [\sigma_\cF(M)]\right)  \/.
\]
\end{enumerate}
\end{theo}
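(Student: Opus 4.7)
The plan is to reduce each of the three assertions to the corresponding local microlocal formula already proved in Section~\ref{sec; microlocalintersectionformula}, and then bridge ordinary and jet characteristic cycles via the local characterization in Proposition~\ref{prop; jetconormal}. Each part thus has two steps: a local-propositions step giving the first equality, and a trivialization step giving the second.

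First I would unfold the definitions. By construction $\Sch(\cF,D,x)=\Sch(\tilde{\nu}_\alpha,D_\alpha,x)$, and likewise for $\GSV$ and $\Ind_{\log}$; in each case the global hypotheses on $D$ or $\cF$ restrict to precisely the hypotheses of the relevant local proposition on $U_\alpha$. For (2) this localization is exactly the content of Definition~\ref{defn; **}. Applying Proposition~\ref{prop; microSchFormula}, Proposition~\ref{prop; microGSVFormula} and Proposition~\ref{prop; micrologFormula} to $\tilde{\nu}_\alpha$ on $U_\alpha$ then yields the first equality in each of (1), (2), (3), once one notes that the restriction $[\textup{CC}(\ind_D)]|_{U_\alpha}$ equals $[\textup{CC}(\ind_{D_\alpha})]$, and analogously for $\Psi_f(\ind_M)$ and $\ind_{M\setminus D}$, so the local intersection number at $x$ only sees the $U_\alpha$-restriction of the global constructible function.

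Next I would handle the second equalities. By Proposition~\ref{prop; jetconormal}, the trivialization isomorphism $\iota_{U_\alpha}\colon T^*M|_{U_\alpha}\to T^*M\otimes L|_{U_\alpha}$ sends each conormal space $T_Z^*M|_{U_\alpha}$ to $JT_Z^*M|_{U_\alpha}$; pushing forward cycles, it identifies $[\textup{CC}(\gamma)]|_{U_\alpha}$ with $[\textup{JCC}(\gamma)]|_{U_\alpha}$ for every constructible function $\gamma$. By the very construction of $\sigma_\cF$ from $s_\cF$ in the fixed Hermitian metric on $TM\otimes L^\vee$, one also has $\iota_{U_\alpha}(\sigma_\alpha(U_\alpha))=\sigma_\cF(U_\alpha)$. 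Because local intersection multiplicity at $x$ is preserved under an isomorphism of ambient bundles, the second equality drops out.

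The main obstacle, such as it is, is not conceptual but a small compatibility issue: Propositions~\ref{prop; microSchFormula}--\ref{prop; micrologFormula} were formulated using the flat Hermitian metric on $\CC^n$, whereas $\sigma_\cF$ uses a chosen Hermitian metric on $TM\otimes L^\vee$. On $U_\alpha$ these induce two $\mathbb{R}$-linear isomorphisms $TM|_{U_\alpha}\cong T^*M|_{U_\alpha}$ differing by a positive $C^\infty$ factor, so the two dual sections of $\tilde{\nu}_\alpha$ are joined by a convex-combination homotopy of continuous sections whose only zero near $x$ remains $x$. Conservation of intersection number under this deformation, already exploited in the proofs of Propositions~\ref{prop; microSchFormula} and \ref{prop; microGSVFormula}, absorbs the discrepancy. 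All remaining content is bookkeeping: the analytical substance has been fully packaged into the three local propositions and into Proposition~\ref{prop; jetconormal}.
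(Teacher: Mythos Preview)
Your proposal is correct and follows the same approach as the paper: the paper itself gives no separate proof, merely prefacing the statement with ``Combining Proposition~\ref{prop; microSchFormula}, Remark~\ref{rema; isoGSV}, Proposition~\ref{prop; microGSVFormula} and Proposition~\ref{prop; micrologFormula} we obtain the following comparison theorem,'' and your write-up simply spells out the details of that combination (localization to $U_\alpha$, the $\textup{CC}/\textup{JCC}$ identification via Proposition~\ref{prop; jetconormal}, and the metric-independence already noted after Theorem~\ref{theo:microlocal for Euobs}). One small imprecision: two Hermitian metrics on $TM|_{U_\alpha}$ differ by a positive-definite self-adjoint endomorphism rather than a scalar ``positive $C^\infty$ factor,'' but your convex-combination homotopy argument goes through verbatim with this correction.
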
  

\begin{coro}[An algebraic formula for the GSV index]\label{coro:algebraicGSV}
If $x\in U_\alpha$ and $D$ has at most an isolated singularity at $x$, then  
\[
\GSV(\cF, D, x)  = \Eu^{\tilde{\nu}_\alpha}_D(x) +1 +(-1)^{n-1}\mu_D(x) -\Eu_D(x).
\]
where $\mu_D(x)$ is the Milnor number of $D$ at $x$, i.e. $(-1)^{n-1}$ multiplied by the reduced Euler characteristic of the Milnor fibre at $x$.
\end{coro}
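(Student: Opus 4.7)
The plan is to reduce the statement to the microlocal framework by using Theorem~\ref{theo; comparison}(2) to rewrite $\GSV(\cF,D,x)$ as a weighted local Euler obstruction of $\tilde{\nu}_\alpha$, and then to evaluate that Euler obstruction by decomposing the underlying constructible function $\Psi_f(\ind_M)$ into pieces whose individual contributions are already identified by earlier results in the paper.

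First I would check that the hypotheses of Theorem~\ref{theo; comparison}(2) are in force: since $D$ has at most an isolated singularity at $x$, Remark~\ref{rema; isoGSV} says that condition $(\ast\ast)$ holds for $\cF$ in a neighbourhood of $x$, so
\[
\GSV(\cF, D, x) \;=\; \Eu\bigl(\tilde{\nu}_\alpha,\,\Psi_f(\ind_M),\,x\bigr).
\]

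Second I would compute $\Psi_f(\ind_M)$ explicitly on a small analytic neighbourhood of $x$. Its value at $y$ is the Euler characteristic of the local Milnor fibre of $f$ at $y$, which is $0$ off $D$, equal to $1$ at every smooth point of $D$ (contractible Milnor fibre), and equal to $1+(-1)^{n-1}\mu_D(x)$ at the isolated singularity $x$ by the definition of $\mu_D(x)$ via the reduced Euler characteristic of the Milnor fibre. This yields
\[
\Psi_f(\ind_M) \;=\; \ind_D + (-1)^{n-1}\mu_D(x)\,\ind_{\{x\}}
\]
locally near $x$. Because $D$ is smooth away from $x$, the function $\ind_D-\Eu_D$ is supported at $\{x\}$ with value $1-\Eu_D(x)$, so I can rewrite
\[
\Psi_f(\ind_M) \;=\; \Eu_D + \bigl(1+(-1)^{n-1}\mu_D(x)-\Eu_D(x)\bigr)\Eu_{\{x\}}
\]
using $\ind_{\{x\}}=\Eu_{\{x\}}$.

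Finally I would exploit the linearity of $\Eu(\tilde{\nu}_\alpha,-,x)$ on constructible functions to which $\tilde{\nu}_\alpha$ is tangent. The first summand contributes $\Eu^{\tilde{\nu}_\alpha}_D(x)$ by the very definition of the local Euler obstruction of a vector field. For the point class, I would note $\Eu(\tilde{\nu}_\alpha,\Eu_{\{x\}},x)=1$: indeed the Nash blowup of a point is itself a point with rank-zero Nash tangent bundle and Thom class $1$, and equivalently Theorem~\ref{theo:microlocal for Euobs} gives $\sharp_x([T^*_xW]\cdot[\tilde{\sigma}(W)])=1$ by transversality of the fibre and the continuous section at the zero covector. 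Adding the contributions produces the claimed identity, and Proposition~\ref{prop; EulerObs} provides the promised algebraic interpretation of $\Eu^{\tilde{\nu}_\alpha}_D(x)$ via the Nash blowup of $D$. The only step requiring care is the identification of the coefficient of $\ind_{\{x\}}$ in the decomposition of $\Psi_f(\ind_M)$ (in particular tracking the sign in the definition of $\mu_D(x)$); no other step presents a genuine obstacle.
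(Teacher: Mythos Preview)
Your proposal is correct and follows essentially the same route as the paper: both arguments invoke the microlocal expression for $\GSV$ from Theorem~\ref{theo; comparison}(2), decompose $\Psi_f(\ind_M)=\Eu_D+k\cdot\ind_{\{x\}}$ near $x$ with $k=1+(-1)^{n-1}\mu_D(x)-\Eu_D(x)$, and use that the point term contributes~$1$. The only cosmetic difference is that you compute $k$ by evaluating Milnor-fibre Euler characteristics directly, whereas the paper extracts it from the identity $\Psi_f(\ind_M)-\Phi_f(\ind_M)=\ind_D$.
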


\begin{proof}
Since $D$ has at most an isolated singularity at $x$, there exists some $k\in\mathbb{Z}$ such that
\[
\Psi_f(\ind_M)=\Eu_D+k\cdot\ind_x.
\]
We may get the number $k$ by the relation $\Psi_f(\ind_M)-\Phi_f(\ind_M)=\ind_D$. Plugging in $x$ we obtain
\[
k=1+\Phi_f(\ind_M)(x)-\Eu_D(x)=1+(-1)^{n-1}\mu_D(x)-\Eu_D(x).
\]
Therefore
\begin{align*}
\GSV(\cF, D, x) &= \sharp_x \left([\sigma_\alpha(U_\alpha)]\cdot [\textup{CC}(\Psi_f(\ind_M)]\right) \\
&= \sharp_x \left(\textup{CC}(\Eu_D)\cdot [\sigma_\alpha(U_\alpha)] \right) +k\cdot \sharp_x\left(\textup{CC}(\ind_x)\cdot [\sigma_\alpha(U_\alpha)] \right) \\
&=\Eu(\tilde{\nu}_\alpha,\Eu_D,x)+ 1+(-1)^{n-1}\mu_D(x)-\Eu_D(x) \/.  \qedhere
\end{align*}
\end{proof}

\begin{rema}
\label{rema; NashEu}
Let $\pi:\tilde{D}\to D$ be the Nash blowup of $D$ and denote the Nash tangent bundle by $\cT$. The foliation $s_\cF\in H^0(M,TM\otimes L^\vee)$ induces a section $\tilde{s}_\cF: H^0(\tilde{D},\cT\otimes \pi^*L)$ since $\cF$ is logarithmic along $D$. Then
\[
\Eu^{\tilde{\nu}_\alpha}_D(x)=\int_{\pi^{-1}(x)} [\tilde{s}_\cF(\tilde{D})] \cap [\tilde{D}].
\]
\end{rema}

\begin{rema}\label{rema:GSV-SCH=mu}
When $D$ has an isolated singularity at $x$, it is proved in \cite[Theorem 3.2.1]{BSS87} that  
\[
\GSV (\cF, D, x)= \Sch (\cF, D, x) +(-1)^{n-1}\mu_D(x) \/.
\]
This follows easily from Theorem \ref{theo; comparison} and $\Psi_f(\ind_M)=\ind_D + \Phi_f(\ind_M)=\ind_D + (-1)^{n-1}\mu_x(D)\cdot\ind_x$.
\end{rema}

The following global index formula generalizes 
\cite[Theorem 2]{CM19} and \cite[Theorem 1]{CM24}, which were stated only for normal crossing divisors.
\begin{coro} 
\label{coro; SEH}
Let $\cF$ be a logarithmic one-dimensional foliation along $D$ with tangent bundle $L$.  
We assume $\cF$ has only isolated singularities.
\begin{enumerate}
\item If  $D$ is holonomic, strongly Euler homogeneous and free, then 
\[
\int_M c(TM(-log D)-L)\cap [M]=\sum_{x\in Sing(\cF)\setminus D}\Ind_{\PH}(\cF, x) + \sum_{y\in Sing(\cF)\cap D} \Ind_{\log}(\cF, D, y) \/.
\]

\item If  $D$ has only isolated singularities and $L$ is very ample, then
\[
\chi(M\setminus D\cup V)=\sum_{x\in Sing(\cF)\setminus D}\Ind_{\PH}(\cF, x) + \sum_{y\in Sing(\cF)\cap D} \Ind_{\log}(\cF, D, y) +(-1)^n \left( \mu_D-\tau_D\right) \/,
\]
where $V$ is  a generic hypersurface in the linear system $|L|$, $\mu_D$ and $\tau_D$ denote the total Tjurina number and the total Milnor number of $D$.
\end{enumerate}
\end{coro} 
\begin{proof} 
Let $U=M\setminus D$ be the complement. Since $s_\cF$  is homotopic to the zero section, we have 
\[
 \deg \left([\textup{JCC}(\ind_U)]\cdot [M] \right)  
=  \deg \left([\textup{JCC}(\ind_U)]\cdot [\sigma_\cF(M)] \right)  \/.
\]
Both conclusions are avatars of this equality.  Since $\cF$ has  only isolated singularities, the RHS equals 
\[
 \sum_{x\in \Sing(\cF)\cap U} \sharp_x \left( [\textup{JCC}(\ind_U)]\cdot [\sigma_\cF(M)] \right)+\sum_{y\in \Sing(\cF)\cap D} \sharp_y \left( [\textup{JCC}(\ind_U)]\cdot [\sigma_\cF(M)] \right) \/.
\] 
 By Example~\ref{exam; microPH}, Proposition~\ref{prop; micrologFormulaiso}  and  Proposition~\ref{prop; micrologFormulafree}, in the first case  this   equals 
\[
\sum_{x\in \Sing(\cF)\cap U} \Ind_{\PH}(\cF, x) +\sum_{y\in \Sing(\cF)\cap D}  \Ind_{\log}(\cF, D, y) \/, 
\]
while in the second case this equals    
\[
\sum_{x\in Sing(\cF)\setminus D}\Ind_{\PH}(\cF, x) + \sum_{y\in Sing(\cF)\cap D} \Ind_{\log}(\cF, D, y) +(-1)^n \left( \mu_D-\tau_D\right) \/.
\] 
For the LHS, in the second case by \cite[Proposition 7.7]{LZ24-1} we have    
\[
\chi(M\setminus D\cup V)
=\deg \left([\textup{JCC}(\ind_U)]\cdot [M] \right)  \/.
\] 
In the first case, 
 by Proposition~\ref{prop; hSEHchernformula} we have
\[
 c_*(\ind_{U})=\textup{Dual-Shadow}_{T^*M} \left([\textup{CC}(\ind_U)]\right)=c(TM(-\log D)) \cap [M] \/.
\]
Therefore
\[
\textup{Shadow}_{T^*M} \left([\textup{CC}(\ind_U)]\right)=\sum^n_{j=0} (-1)^{n-j}c_{j}(TM(-\log D)) \cap [M].
\]

By the   twisted shadow  formula   Proposition~\ref{prop; jettwistformula} we then obtain
\begin{align*}
\int_M c(TM(-\log D)-L)\cap [M] 
=& \sum_{j=0}^n \int_M (-1)^{n-j}c_1(L)^{n-j}c_k(TM(-\log D)) \cap [M] \\
=&   \deg \left([\textup{JCC}(\ind_U)]\cdot [M] \right)   \/. \qedhere
\end{align*}
\end{proof}

We are aiming to generalize \cite[Theorem 7.16]{Suwa}, \cite[Theorem A]{Macha25} and  
\cite[Theorem 1-(III)]{Macha25-Mar} to arbitrary singularities. For this we  assume that the foliation $\cF$  along $D$ satisfies condition ($\ast\ast$), so that the $\GSV$ index is defined. By Proposition~\ref{prop; stratification} this includes the cases where either $D$ has only isolated singularities or $D$ is a holonomic and strongly Euler homogeneous divisor.

Let $\Sing(D)$ be the singular subscheme of $D$ and let $\cJ_D$ be the corresponding ideal sheaf. Let $\tilde{M}:=\textup{Bl}_{\cJ_D}M$ be the blowup of $M$ along $\Sing(D)$ and let $\pi\colon \tilde{M}\to M$ be the blowup morphism. 
We denote by $\sD=\pi^{-1}(D)$  the total transform of $D$ and by $E$  the exceptional divisor of this blowup.   These are Cartier divisors on $\tilde{M}$ and we consider the virtual bundles 
$\cK:=\pi^*TM+\sO(-E)-\sO(\sD-E)$  
on $\tilde{M}$. This is an element in $K^0(\tilde{M})$ and for its Chern class we have
\begin{align*}
\pi_* \left(c(\cK)  \cap  [\tilde{M}]\right) 
=& \ \pi_*\left(  
\frac{c(\pi^*TM)(1-E)}{1+\sD-E}  \cap  [\tilde{M}] \right) \notag \\
=& \ c(TM)  \cap  [M]  -\pi_*\left( c(\pi^*TM)\cap \frac{[\sD]}{1+\sD-E} \right).
\end{align*}

Recall the following facts about Chern classes of (global) vanishing cycles.
\begin{lemm}[Theorem 3.2(i) in \cite{PP01}]\label{chernchi}
\[
\pi_*\left( c(\pi^*TM)\cap \frac{[\sD]}{1+\sD-E} \right)=c_*(\Psi_f(\ind_M)).
\]
\end{lemm}
Here to define $\Psi_f(\ind_M)$  we take a local equation $h$ for the section $f\in H^0(M,\sO(D))$ around any point $x\in D$ and consider the value $\Psi_h(\ind_{M})(x)$. This value is  independent of  the choice of the local equation $h$ and therefore they glue to a  global constructible function $\Psi_f(\ind_M)$. 

The Fulton-Johnson class $c^{FJ}(D)$ of the singular hypersurface $D$ is defined as (see \cite[Example 4.2.6]{MR1644323})
\[
c^{FJ}(D)=c(TM-\sO(D))\cap[D].
\]

\begin{lemm}\label{chernchi;isolated singularities}
If $D$ has only isolated singularities, then $c_*(\Psi_f(\ind_M))= c^{FJ}(D)$. 
\end{lemm}
\begin{proof}
The reult is well-known and can be deduced from \cite[Example 0.1]{PP01}. It is also a direct corollary of Lemma \ref{chernchi}.  Because $D$ has only isolated singularities, $\pi_*E^{i}=0$ for $i=0,\ldots, n-1$. By projection formula we have
\begin{align*}
\pi_*\left( c(\pi^*TM)\cap \frac{[\sD]}{1+\sD-E} \right)&=\pi_*\left( c(\pi^*TM)\cap \sum_{i=0}^{n-1}(E-\sD)^i\cdot [\sD] \right) \\
&=\pi_*\left( c(\pi^*TM)\cap \sum_{i=0}^{n-1}(-\sD)^i\cdot[\sD] \right) \\
&=c(TM)\cap \sum_{i=0}^{n-1}(-D)^i\cdot [D] \\
&=c(TM-\sO(D))\cap [D].
\end{align*}
\end{proof}

By Lemma \ref{chernchi}, we have
\begin{equation}\label{eq; Chernvirtual}
c_*(\Psi_f(\ind_M))=  c(TM)  \cap  [M] -\pi_* \left(c(\cK)  \cap  [\tilde{M}]\right).
\end{equation}

First we generalize \cite[Theorem 7.16]{Suwa} from isolated singularities to arbitrary singularities, assuming the foliation $\cF$  along $D$ satisfies condition $(\ast\ast)$. 
\begin{coro}
\label{coro; sumGSVonD}
Let $\cF$ be a logarithmic one-dimensional foliation    along $D$ with tangent bundle $L$ and with isolated singularities.
If $\cF$  satisfies  condition $(\ast\ast)$, then 
\[
\int_M c(TM- L)\cap [M] - \int_{\tilde{M}} c(\cK-\pi^*L)\cap [\tilde{M}] 
= \sum_{x\in \Sing(\cF)\cap D}\GSV(\cF, D, x) \/.
\]
When $D$ has only isolated singularities, the formula above is reduced to Suwa's formula 
\[
\int_D c(TM-\sO(D)-L)\cap [D] 
=  \sum_{x\in \Sing(\cF)\cap D} \GSV(\cF, D, x) \/.
\]
\end{coro}

\begin{proof}
We write $\{c_*(\Psi_f(\ind_M))\}_{k}$ for the $k$-dimensional component of $c_*(\Psi_f(\ind_M))$. Since the proper pushforward $\pi_*$  preserves dimension, by \eqref{eq; Chernvirtual} we have  
\[
\{c_*(\Psi_f(\ind_M))\}_{k} = c_{n-k}(TM)  \cap  [M] 
 -  \pi_*\left( c_{n-k}(\cK)  \cap  [\tilde{M}] \right).
\] 
By the shadow formula Proposition~\ref{prop; shadow}  we have 
\[
c_*(\Psi_f(\ind_M))
= \textup{Dual-Shadow}_{T^*M}\left(\textup{CC}(\Psi_f(\ind_M)) \right) ,
\]
or equivalently 
\[
\textup{Shadow}_{T^*M}\left(\textup{CC}(\Psi_f(\ind_M)) \right) = \sum^n_{k=0}(-1)^kc_{n-k}(TM)\cap [M] - \sum^n_{k=0}(-1)^k\pi_*\left(  c_{n-k}(\cK)  \cap  [\tilde{M}] \right).
\]
By  the   twisted shadow  formula  Proposition~\ref{prop; jettwistformula}  we then have 
\begin{align*}
&\deg \left( [\textup{JCC}(\Psi_f(\ind_M))]\cdot [M] \right) \\
=&  \sum^n_{k=0}(-1)^k\int_M c_1(L)^kc_{n-k}(TM)\cap[M] - 
\sum_{k=0}^n (-1)^{k} \int_M  c_1(L)^k\cap \pi_*\left(  c_{n-k}(\cK)  \cap  [\tilde{M}] \right) \\
=& \int_M c(TM-L)\cap [M] -  \int_{\tilde{M}}   c(\cK-\pi^*L) \cap  [\tilde{M}]  \/.
\end{align*}
On the other hand, as $\cF$  satisfies condition ($\ast\ast$), by Theorem \ref{theo; comparison} we have
\[
\deg \left( [\textup{JCC}(\Psi_f(\ind_M))]\cdot [M] \right)
= \deg \left( [\textup{JCC}(\Psi_f(\ind_M))]\cdot [\sigma_\cF(M)] \right)
= \sum_{x\in \Sing(\cF)\cap D} \GSV(\cF, D, x)  \/.  
\]
When $D$ has only isolated singularities, Lemma \ref{chernchi;isolated singularities} and equation \eqref{eq; Chernvirtual} together implies that
\[
c(TM-\sO(D))\cap[D]= c(TM)  \cap  [M] -\pi_* \left(c(\cK)  \cap  [\tilde{M}]\right)
\]
Capping both sides of the equation with $c(L)^{-1}$ we have
\[
c(TM-\sO(D)-L)\cap[D]
 =  c(TM-L)\cap [M]-\pi_*\left(c(\cK-\pi^*L)\cap [\tilde{M}] \right) \/.
\] 
This proves the second statement. 
\end{proof}

The following corollary generalizes  \cite[Theorem A]{Macha25} and  
\cite[Theorem 1-(III)]{Macha25-Mar}, which were  stated only  for   hypersurface $D$ with  isolated  singularities.
\begin{coro}
\label{coro; iso}
Let $\cF$ be a logarithmic one-dimensional foliation    along $D$ with tangent bundle $L$ and with isolated singularities.  
Assume that $\cF$  satisfies condition  $(\ast\ast)$, then we have 
\begin{align*}
&\int_{\tilde{M}}   c(\cK-\pi^*L) \cap  [\tilde{M}] \\
=&\sum_{x\in \Sing(\cF)\setminus D_{sm}} \Ind_{\PH}(\cF, x)+	
\sum_{x\in \Sing(\cF)\cap D_{sm}} \Ind_{\log}(\cF,D, x) -
\sum_{y\in \Sing(\cF)\cap \Sing(D)} \GSV(\cF, D, y) \/.
\end{align*}

If  $D$ has only isolated singularities,  we have 
\begin{align*}
 & \int_M c_n(TM-\sO(D)-L)\cap [M] 
\\
=& \sum_{x\in \Sing(\cF)\setminus D_{sm}} \Ind_{\PH}(\cF, x)+	
\sum_{x\in \Sing(\cF)\cap D_{sm}} \Ind_{\log}(\cF,D, x) -
\sum_{y\in \Sing(D)} \GSV(\cF, D, y)  \\
=&  \sum_{x\in \Sing(\cF)} \Ind_{\PH}(\cF, x)+	
\sum_{x\in \Sing(\cF)\cap D } \Sch(\cF,D, x) + (-1)^n 
\sum_{y\in \Sing(D)} \mu_D(y)   \/.
\end{align*}
\end{coro}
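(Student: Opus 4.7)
The plan is to apply Aluffi's twisted shadow formula (Proposition~\ref{prop; jettwistformula}) to the constructible function
\[
\gamma \;:=\; \ind_M - \Psi_f(\ind_M).
\]
Locally $\gamma$ equals $\ind_M$ off $D$, equals $\ind_{M\setminus D}$ at smooth points of $D$ (since the Milnor fibre of $f$ at a smooth point is contractible, so $\Psi_f(\ind_M)=\ind_D$ there), and picks up a correction from the Milnor number at singular points of $D$. These three local behaviours will produce, via Theorem~\ref{theo; comparison}, exactly the three types of indices on the right-hand side.

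The central algebraic input is the identity $c_*(\gamma)=c(TM-\sO(D))\cap[M]$. To verify it I would use that, for $D$ with only isolated singularities,
\[
\Psi_f(\ind_M) \;=\; \ind_D + \sum_{y\in\Sing(D)}(-1)^{n-1}\mu_D(y)\,\ind_{\{y\}},
\]
combined with the classical Milnor-class computation (Parusi\'{n}ski-Pragacz, Aluffi) for an isolated hypersurface singularity, which reads $c_*(\ind_D)-c_{\textup{Fulton}}(D)=(-1)^n\sum_{y\in\Sing(D)}\mu_D(y)[y]$ with $c_{\textup{Fulton}}(D)=c(TM)\cdot D/(1+D)\cap[M]$. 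Together these give $c_*(\Psi_f(\ind_M))=c_{\textup{Fulton}}(D)$ and hence $c_*(\gamma)=c(TM)\cap[M]-c_{\textup{Fulton}}(D)=c(TM-\sO(D))\cap[M]$. Proposition~\ref{prop; jettwistformula} then transforms the left-hand side into
\[
\deg\bigl([\textup{JCC}(\gamma)]\cdot[M]\bigr),
\]
where $M$ is the zero section of $T^*M\otimes L$, and by the law of conservation of numbers this equals $\deg\bigl([\textup{JCC}(\gamma)]\cdot[\sigma_\cF(M)]\bigr)$.

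Since $\cF$ has only isolated singularities and $\Sing(D)$ is finite, the latter quantity is a finite sum of local intersection numbers, which I evaluate using Theorem~\ref{theo; comparison}: at $x\in\Sing(\cF)\cap(M\setminus D)$ the nearby cycle term vanishes locally and Example~\ref{exam; microPH} gives $\Ind_{\PH}(\cF,x)$; at $x\in\Sing(\cF)\cap D_{sm}$ one has $\gamma\equiv\ind_{M\setminus D}$ locally and the smooth divisor is holonomic, strongly Euler homogeneous and free, so Theorem~\ref{theo; comparison}(3) yields $\Ind_{\log}(\cF,D,x)$; and at $y\in\Sing(D)$ Theorem~\ref{theo; comparison}(2), whose hypothesis $(\ast\ast)$ is automatic by Remark~\ref{rema; isoGSV}, together with Example~\ref{exam; microPH} gives $\Ind_{\PH}(\cF,y)-\GSV(\cF,D,y)$. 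Summing yields the first equality, and the second follows by substituting $\GSV=\Sch+(-1)^{n-1}\mu_D$ (Remark~\ref{rema:GSV-SCH=mu}) together with the local identity $\Ind_{\log}=\Ind_{\PH}-\Sch$ at smooth points of $D$, itself a direct consequence of $\ind_{M\setminus D}=\ind_M-\ind_D$ and Theorem~\ref{theo; comparison}. The main obstacle I anticipate is the Milnor-class identity $c_*(\Psi_f(\ind_M))=c_{\textup{Fulton}}(D)$: it is classical for isolated hypersurface singularities, but careful sign tracking through the Parusi\'{n}ski-Pragacz formula is required to align with the conventions used throughout the paper.
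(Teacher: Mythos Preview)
Your approach is essentially the same as the paper's. Both arguments hinge on the identity $c_*(\Psi_f(\ind_M))=c^{FJ}(D)$ (the paper cites \cite[Theorem 3.2]{PP01} for this directly, so your anticipated sign-tracking through Parusi\'{n}ski--Pragacz is unnecessary), followed by the twisted shadow formula, deformation to $\sigma_\cF(M)$, and Theorem~\ref{theo; comparison} to interpret the local intersection numbers. The only cosmetic differences are that the paper treats $\textup{JCC}(\ind_M)$ and $\textup{JCC}(\Psi_f(\ind_M))$ separately rather than bundling them into your $\gamma$, and at smooth points of $D$ the paper first writes everything as $\GSV$ and then converts via $\GSV=\Ind_{\PH}-\Ind_{\log}$, whereas you recognize $\gamma=\ind_{M\setminus D}$ locally and invoke the logarithmic index directly.

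One small point you should make explicit: when you evaluate the contribution at $y\in\Sing(D)$ as $\Ind_{\PH}(\cF,y)-\GSV(\cF,D,y)$, you are tacitly using $\Sing(D)\subset\Sing(\cF)$. The paper records this, observing that it follows from Saito's triviality lemma (a nonvanishing logarithmic vector field forces $D$ to be locally a product, contradicting an isolated singularity).
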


\begin{proof}
Combining equation \eqref{eq; sumofPH} with  Corollary~\ref{coro; sumGSVonD} we have
\begin{align*}
&\int_{\tilde{M}}   c(\cK-\pi^*L) \cap  [\tilde{M}]
= \sum_{x\in \Sing(\cF)} \Ind_{\PH}(\cF, x)-	\sum_{x\in \Sing(\cF)\cap D }  \GSV(\cF, D, x)  
\\
&= \sum_{x\in \Sing(\cF)} \Ind_{\PH}(\cF, x)-	\sum_{x\in \Sing(\cF)\cap D_{sm}}  \GSV(\cF, D, x)  -
\sum_{y\in \Sing(\cF)\cap\Sing(D)} \GSV(\cF, D, y)
 \/.	
\end{align*}

In neighborhoods of smooth points $x\in D_{sm}$ we have $\Psi_f(\ind_M)=\ind_D=\ind_{M}-\ind_{M\setminus D}$,   by Theorem \ref{theo; comparison} we have
$\GSV(\cF, D, x) = \Ind_{\PH}(\cF, x)-
\Ind_{\log}(\cF,D, x)$.
Therefore 
\begin{align*}
&\int_{\tilde{M}}   c(\cK-\pi^*L) \cap  [\tilde{M}]
 \\
=&\sum_{x\in \Sing(\cF)} \Ind_{\PH}(\cF, x)-	
\sum_{x\in \Sing(\cF)\cap D_{sm}}  \left( \Ind_{\PH}(\cF, x)-\Ind_{\log}(\cF,D, x)\right) -
\sum_{y\in \Sing(\cF)\cap \Sing(D)} \GSV(\cF, D, y) \\
=& \sum_{x\in \Sing(\cF)\setminus D_{sm}} \Ind_{\PH}(\cF, x)+	
\sum_{x\in \Sing(\cF)\cap D_{sm}} \Ind_{\log}(\cF,D, x) -
\sum_{y\in \Sing(\cF)\cap \Sing(D)} \GSV(\cF, D, y) \/.
\end{align*}

 Now we assume that $D$ has only isolated singularities. Since $s_\cF$ is tangent to $D$, by Saito's triviality lemma \cite[Proposition (3.6)]{MR586450} we have $\Sing(D)\subset\Sing(\cF)$. By projection formula, equation \eqref{eq; Chernvirtual} and Lemma \ref{chernchi;isolated singularities} we have
\begin{align*}
\pi_*\left(c(\cK-\pi^*L)\cap [\tilde{M}] \right)&=c(L)^{-1}\cap (\pi_*(\cK)\cap[\tilde{M}]) \\
&=c(L)^{-1}\cap \Big(c(TM)\cap[M]-c(TM-\sO(D))\cap[D]\Big) \\
&=c(L)^{-1}\cap \Big(c(TM-\sO(D))\cap [M]\Big) \\
&=c(TM-\sO(D)-L)\cap[M].
\end{align*} 
This proves the first equation when $D$ has only isolated singularities. The second equation is obtained from  Remark \ref{rema:GSV-SCH=mu}. 
\end{proof}

\bibliographystyle{plain}
\bibliography{ref}

\end{document}